\newtheorem{defeng}{Definition}[section]
\newtheorem{theorem}[defeng]{Theorem}
\newtheorem{lemma}[defeng]{Lemma}
\newtheorem{corollary}[defeng]{Corollary}
{\theorembodyfont{\rmfamily} }
{\theorembodyfont{\rmfamily} }
{\theorembodyfont{\rmfamily} }
{\theoremstyle{break}\theorembodyfont{\rmfamily} }
{\theoremstyle{break}\theorembodyfont{\rmfamily} }
\newcounter{claim}
\newenvironment{proof}[1][]%
 {\noindent {\setcounter{claim}{0}\sc proof ---
   }{#1}{}}{\hfill$\Box$\vspace{2ex}} 
\newenvironment{claim}[1][]%
{\refstepcounter{claim}\vspace{1ex}\noindent{(\it\arabic{claim}){#1}{}}\it}{\vspace{1ex}}
\newenvironment{proofclaim}[1][]%
	{\noindent {}{#1}{}}{ This proves~(\arabic{claim}).\vspace{1ex}}
\newcommand{\sm}{\setminus} 
\title{On wheel-free graphs}
\author{Pierre Aboulker\thanks{Universit\'e Paris 7 -- Paris
    Diderot, LIAFA (France), \newline email: pierre.aboulker@liafa.jussieu.fr}\ ,
  Fr\'ed\'eric Havet\thanks{Projet Mascotte, I3S (CNRS, UNSA) and INRIA, Sophia Antipolis. Partly supported by the French
    \emph{Agence Nationale de la Recherche} under reference ANR-09-BLAN-0159;
 email: Frederic.Havet@inria.fr}\ , Nicolas
  Trotignon\thanks{CNRS, LIP -- ENS Lyon
    (France),\ email: nicolas.trotignon@ens-lyon.fr. \newline The first
    and last authors are partially supported by the French
    \emph{Agence Nationale de la Recherche} under reference
    \textsc{anr 10 jcjc 0204 01} and by PHC Pavle Savi\'c grant,
    jointly awarded by EGIDE, an agency of the French Minist\`ere des
    Affaires \'etrang\`eres et europ\'eennes, and Serbian Ministry of
    Education and Science.}}
\date{June 17, 2011}
\begin{document}

\maketitle

\begin{abstract}
  A wheel is a graph formed by a chordless cycle and a vertex that has
  at least three neighbors in the cycle.  We prove that every
  3-connected graph that does not contain a wheel as a subgraph is in
  fact minimally 3-connected.  We give a new proof of a theorem of
  Thomassen and Toft:  every graph that does not
  contain a wheel as a subgraph is 3-colorable.  
\end{abstract}

\vspace{2ex}

{\bf\noindent Key words: } Truemper configuration, wheel, cycle
through three vertices, coloring, minimally 3-connected graph.

\vspace{2ex}

{\bf\noindent AMS Classification: } 05C75, 05C38, 05C15

\section{Introduction}

A \emph{wheel} is a graph formed by a chordless cycle $C$, called the
\emph{rim}, and a vertex $v$ (not in $V(C)$), called the \emph{center}, such that the
center has at least three neighbors on the rim.  So, the complete
graph on four vertices is the smallest wheel.  When convenient, we
denote a wheel by $(C, v)$.  Wheels are one of the four
\emph{Truemper's configurations}, see~\cite{truemper}, that play a
role in several theorems on the structure of graphs and matroids.  Let
us see more precisely how wheels play some role in the description of
the structure of several graph classes.

A \emph{hole} in a graph is a chordless cycle on at least four
vertices.  The structure of a \emph{Berge graph} $G$, that is a graph
such that $G$ and its complement do not contain odd holes, is studied
in \cite{chudnovsky.r.s.t:spgt}.  The results obtained there famously
settled the Strong Perfect Graph Conjecture.  The proof goes through
several cases, and the last fifty pages of the proof deal with the case
when $G$ contains certain kinds of wheels.  Consequently
the structure of a Berge graph $G$ is simpler when $G$ does not
contain these kinds of wheels.  In addition, the structure of a graph $G$ with
no even holes is complex. A first decomposition theorem was given in \cite{conforti.c.k.v:eh1} and a better one in \cite{dsv:ehf}.  In the later paper,
very long arguments are devoted to situations when $G$ contains certain
kinds of wheels.  This suggests that graphs that do not contain a
wheel as an induced subgraph should have interesting structural
properties.  Understanding this structure might shed a new light on
the works listed above.  Since ``understanding the structure'' is a
slightly fuzzy goal, we adress the following precise open questions.

\vspace{2ex}

\noindent{\bf Question.} Is there a constant $c$ such that every graph
with no wheel as an induced subgraph is $c$-colorable?

\vspace{2ex}

\noindent{\bf Question.}  Is there a polynomial-time algorithm to
decide whether an input graph contains a wheel as an induced subgraph?

\vspace{2ex}

As observed by Esperet and Stehl\'ik~\cite{perso}, a classical
construction of triangle-free graphs with arbitrarily large chromatic
number, due to Zykov~\cite{Zyk49}, shows that the constant $c$ in the
first question must be at least~4.  This can also be deduced from the
graph represented in Figure~\ref{fig:R}: $R(3, 5)$ does not contain a
wheel as an induced subgraph, but has no 3-coloring because it has 13
vertices and stability number $4$.  
 \begin{figure}[hbtp]
\begin{center}
\includegraphics{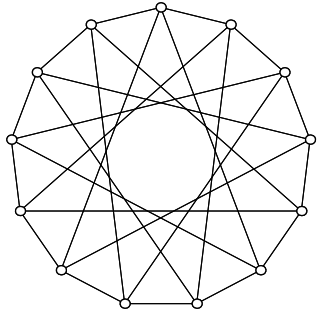}
\end{center}
\caption{The Ramsey graph $R(3, 5)$, that is the unique graph $G$
  satisfying $|V(G)|\geq 13$, $\alpha(G) = 4$ and $\omega(G) = 2$.\label{fig:R}}
\end{figure}
The above questions seem
difficult and it might be of interest to study subclasses of graphs
with no wheels as induced subgraphs.  Two such classes have already
been studied (but not motivated by the study of wheel-free graphs).
First, note that any wheel different from $K_4$ contains a cycle with
a unique chord.  So, graphs with no wheels as induced subgraphs is a
superclass of graphs with no cycles with a unique chord and no $K_4$.
These graphs have a precise structural description,
see~\cite{nicolas.kristina:one}.

A natural subclass of graphs with no wheel as an induced subgraph is
the class of graphs that do not contain a subdivision of a wheel as an
induced subgraph.  It is easy to see that this class of graphs is
precisely the class of graphs with no wheel and no subdivision of
$K_4$ as a induced subgraphs.  It turns out that the structure of a
graph $G$ with no induced subdivision of $K_4$ is investigated
in~\cite{nicolas:isk4}.  The proof goes through several cases: when
$G$ contains $K_{3, 3}$, when $G$ contains a \emph{prism} (another
Truemper's configuration, not worth defining here), and when $G$
contains a wheel.  This last case seems to be the most difficult: no
satisfactory structural description is found in this case, whereas
when excluding induced wheels, a very precise structure theorem is
given, with several consequences.  For instance, it is proved that
every graph that does not contain a subdivision of $K_4$ or a wheel
(as induced subgraphs) is 3-colorable.

\vspace{3ex}

Here we restrict our attention to another subclass: graphs with no
wheels \emph{as subgraphs}.  So, from here on, \emph{contain} and
\emph{-free} refer to the subgraph containement relation.

In Section~\ref{sec:zoo} we show several examples of wheel-free
graphs.  This will give insight to the reader and also shows that the
results presented here apply to a class of graphs that is not empty.
In Section~\ref{sec:Menger}, we prove several technical lemmas that
are all slight variations on Menger's Theorem needed in the rest of
the paper.  In Section~\ref{s:sd}, we study the connectivity of
wheel-free graphs.  The main result here is that every 3-connected
wheel-free graph is in fact minimally 3-connected.  As a direct
application, we prove that any wheel-free graph has a vertex of degree at
most~3.  This is a particular case of the following theorem due to
Turner.
\begin{theorem}[Turner \cite{turner:05}]
  \label{th:Turner}
  Let $k\geq 3$ be an integer and $G$ be a graph that does not contain
  a cycle together with a vertex that has at least $k$ neighbors in
  the cycle.  Then, $G$ has at least one vertex of degree at most
  $k$. 
\end{theorem}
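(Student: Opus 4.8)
The plan is to argue by contradiction: suppose $G$ is a counterexample with every vertex of degree at least $k+1$, and choose $G$ with the fewest vertices (or, among those, the fewest edges). The goal is to locate inside $G$ a configuration consisting of a cycle $C$ and a vertex $v\notin V(C)$ with at least $k$ neighbors on $C$. First I would note that minimum degree at least $k+1 \ge 4$ forces $G$ to be far from a tree, so it certainly contains cycles; the real issue is producing a cycle together with a high-degree attachment to it. A natural first reduction is to observe that we may assume $G$ is $2$-connected: if $G$ has a cutvertex, one of the blocks still has minimum degree at least $k+1$ (a vertex interior to a block keeps all its neighbors), and that block is a smaller counterexample.

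The core of the argument should be a local one around a vertex $v$ of, say, maximum degree $d \ge k+1$. Let $N = N(v)$. The idea is to find a cycle through $v$ using exactly two of its neighbors, so that the other $d-2 \ge k-1$ neighbors of $v$ are candidates to be chords off the rim; but that alone is not enough, so one wants instead a \emph{long} cycle avoiding $v$ that captures many vertices of $N$. Concretely, I would delete $v$ and work in $G-v$, which has minimum degree at least $k \ge 3$, hence contains a cycle; more usefully, since each vertex of $N$ has degree at least $k$ in $G$ and loses only the edge to $v$, the set $N$ is ``heavy'' in $G-v$. The plan is to build, greedily or via a maximality argument, a cycle $C$ in $G-v$ that meets $N$ in at least $k$ vertices — then $(C,v)$ is the forbidden configuration. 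To get such a $C$, pick the cycle $C$ in $G-v$ maximizing $|V(C)\cap N|$; if this maximum is at least $k$ we are done, so suppose it is at most $k-1$. Then I would try to reroute: take a neighbor $u\in N\setminus V(C)$ (which exists since $|N|\ge k+1$), and use the high degree of $u$ and of the vertices of $N$ together with $2$-connectivity / Menger-type arguments to splice $u$ into the cycle or to build an $N$-ear, contradicting maximality — this is where the technical Menger variations from Section~\ref{sec:Menger} are meant to be invoked, and it is also the main obstacle, since one must control that the rerouting does not simultaneously drop two already-captured neighbors.

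An alternative, and perhaps cleaner, route is to use the ``minimally $3$-connected'' machinery just developed: reduce to the case where $G$ is $3$-connected (again passing to a block-like piece, or using the fact that a graph of minimum degree $\ge 4$ has a $3$-connected topological minor / subgraph), and then exploit ear decompositions of $2$-connected graphs. Starting from any cycle and adding ears one at a time, track a vertex $v$ whose final degree is $\ge k+1$: at the moment the $(k{+}1)$-st edge at $v$ is added, either $v$ lies on a cycle that already has $\ge k$ other attachments of $v$, or the ear just added, together with previously built paths, closes up into a cycle $C$ avoiding $v$ through which at least $k$ neighbors of $v$ thread. Formalizing "at least $k$ neighbors of $v$ lie on a common cycle avoiding $v$'' from the ear decomposition is again the delicate point; one needs a counting or pigeonhole step showing that $d \ge k+1$ neighbors of $v$ cannot all be separated from one another by $v$ in a $2$-connected graph unless $v$ itself together with a cycle realizes the configuration. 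Either way, I expect the crux to be purely a statement about $2$-connected graphs: \emph{in a $2$-connected graph, if a vertex $v$ has degree $d$ then $G-v$ contains a cycle meeting $N(v)$ in $\min(d,\text{something})$ vertices unless $v$ has small degree}, and once that combinatorial core is isolated and proved by a maximal-cycle-plus-Menger argument, the theorem follows immediately.
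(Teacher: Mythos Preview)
First, a framing point: the paper does \emph{not} give its own proof of Theorem~\ref{th:Turner}. It attributes the result to Turner and cites \cite{turner:05}; what the paper actually proves is only the special case $k=3$, namely Theorem~\ref{th:deg3}, and it says so explicitly in the introduction. So the relevant comparison is between your plan and the paper's proof of Theorem~\ref{th:deg3}.

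On that comparison: the paper's route for $k=3$ is entirely different from either of your sketches. It does not try to build a cycle through many neighbours of a fixed vertex. Instead it shows (Lemma~\ref{essential} and Corollary~\ref{col:3con}) that a $3$-connected wheel-free graph is \emph{minimally} $3$-connected, and then invokes Mader's Theorem~\ref{mader} to get vertices of degree~$3$. The connectivity-$2$ case is handled by passing to an end $F$, forming the auxiliary $3$-connected graph $G_F$ of Lemma~\ref{l:2conBlock}, and again using Mader. This machinery is tightly tied to $k=3$ (it rests on the theory of minimally $3$-connected graphs) and does not generalise to arbitrary $k$, which is presumably why the paper does not attempt the general statement.

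On your proposal itself: there is a genuine gap, and you have correctly located it. The entire content of the theorem lies in the rerouting step --- showing that a cycle in $G-v$ maximising $|V(C)\cap N(v)|$ must meet $N(v)$ in at least $k$ vertices --- and your plan does not supply an argument for it beyond ``use Menger-type lemmas''. The difficulty you flag (splicing in a new neighbour may evict two old ones) is real, and nothing in Section~\ref{sec:Menger} addresses it; those lemmas produce fans and two disjoint paths, not controlled insertions into a cycle while preserving a prescribed vertex set. Your ear-decomposition alternative has the same missing core: you still need, at the crucial moment, a cycle in $G-v$ through $k$ neighbours of $v$, and tracking ears does not obviously deliver that. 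A smaller issue: your reduction to $2$-connectivity is not correct as stated. It is false that some block of a graph of minimum degree $\ge k+1$ must itself have minimum degree $\ge k+1$; the cutvertex of an end-block may have small degree there. The reduction can be repaired (work in an end-block and use that every \emph{non-cut} vertex keeps degree $\ge k+1$, which is all one actually needs), but the sentence as written is wrong.
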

Note that the result stated in \cite{turner:05} is slightly weaker
than Theorem~\ref{th:Turner}, but the proof given by Turner in
\cite{turner:05} exactly proves the version given here.  We still
include our proof that wheel-free graphs have vertices of degree at
most~3 in Section~\ref{s:sd}.  It is not as direct as Turner's, but it
illustrates some technics that we use later.  Also, with the same
method, we prove that every planar wheel-free graph has a vertex of
degree at most~2.

In a wheel-free graph, any vertex $v$ with three neighbors $x, y, z$
is such that deleting $v$ results in a graph where no cycle goes
through $x,y, z$.  In Section~\ref{s:3ic}, we recall a theorem due to
Watkins and Mesner~\cite{watkinsMesner:cycle}, that describes the
structure of a graph where no cycle goes through three given vertices
$x, y, z$.  We give a new shorter proof of this theorem.  In
Section~\ref{sec:Twin} we give an application of it: we prove that any
3-connected wheel-free graph contains a pair of vertices that are not
adjacent and have exactly the same neighborhood.  In fact, we need to
prove slightly more: the outcome holds not only for wheel-free graphs,
but also for a slightly larger class of graphs: \emph{almost
  wheel-free graphs} (to be defined later).  This result is then used
in Section~\ref{sec:col} to give a new proof of the following.

\begin{theorem}[Thomassen and Toft~\cite{thomassenToft:k4}]
  \label{th:2orT}
  Every wheel-free graph contains either a vertex of degree at most 2
  or a pair of non-adjacent vertices of degree~3 that have the same neighborhood.
\end{theorem}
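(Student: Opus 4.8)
The plan is to reduce Theorem~\ref{th:2orT} to the structural results already announced.  Suppose $G$ is a wheel-free graph with no vertex of degree at most~$2$; I must produce two non-adjacent degree-$3$ vertices with the same neighbourhood.  First I would deal with the low-connectivity cases.  If $G$ is not $2$-connected, it has a block $B$ that is an endblock, and the cut-vertex (if any) separating $B$ from the rest is the only vertex of $B$ that can have neighbours outside $B$; since a wheel-free graph has a vertex of degree at most~$3$ (the application of the connectivity results in Section~\ref{s:sd}, or Theorem~\ref{th:Turner} with $k=3$), applied inside $B$ one finds a low-degree vertex of $B$ distinct from the cut-vertex, which then has degree $\le 3$ in $G$; minimum degree $\ge 3$ forces its degree to be exactly~$3$.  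But a single such vertex is not yet enough — I need a \emph{twin}.  So the real work is to use the twin-existence result: in Section~\ref{sec:Twin} it is proved that every $3$-connected almost wheel-free graph has two non-adjacent vertices with the same neighbourhood.  The strategy is therefore to peel $G$ down, along cut-vertices and $2$-cuts, to a piece that is either small enough to check by hand or is $3$-connected, keeping track of the fact that the pieces remain (almost) wheel-free.

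Concretely I would argue by induction on $|V(G)|$.  The base cases are graphs on few vertices with minimum degree $\ge 3$; the smallest is $K_4$, which indeed has no such twins but also has minimum degree~$3$ and is wheel-containing, so $K_4$ is excluded, and one checks the remaining small wheel-free graphs directly (here the \emph{zoo} of Section~\ref{sec:zoo} is useful).  For the inductive step, if $G$ has a cut-vertex $c$, take an endblock $B$; then $B - c$ (or $B$ with $c$ "expanded" appropriately) is wheel-free with one fewer vertex, but I must be careful that deleting $c$ may create vertices of degree $\le 2$.  The cleaner route is: if $G$ is not $3$-connected, take a cut $S$ with $|S| \le 2$ and a component $A$ of $G - S$; consider the graph $G_A$ obtained from $G[A \cup S]$ by adding an edge between the two vertices of $S$ (if $|S| = 2$) — this is a standard block/$3$-block construction.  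One shows $G_A$ is \emph{almost wheel-free} even though adding the edge might create a wheel only of a very restricted type, which is exactly why the notion of almost wheel-free was introduced; then minimum degree in $G_A$ is still $\ge 3$ except possibly at the vertices of $S$, and a vertex of $A$ of degree $3$ in $G_A$ that is non-adjacent to $S$ is a genuine degree-$3$ vertex of $G$, and its twin in $G_A$ is also disjoint from $S$, hence a twin in $G$.

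The main obstacle, as this outline shows, is precisely the bookkeeping around the non-$3$-connected case: guaranteeing that the smaller graphs fed to the induction (or to the Section~\ref{sec:Twin} theorem) are almost wheel-free and that the twins found there survive back in $G$ — i.e. that neither twin lands on the separator $S$, and that the added edge does not accidentally make a found pair adjacent.  This is where one needs the slightly-larger class of almost wheel-free graphs rather than wheel-free graphs: adding an edge across a small cut can only create a wheel whose rim uses that new edge, and such a wheel is "almost" harmless in a sense to be pinned down by the definition.  Once that definition is in place, the case $G$ $3$-connected is immediate from Section~\ref{sec:Twin}, the case $G$ with a $2$-cut or cut-vertex follows by the construction above together with induction, and the disconnected case is trivial (apply the statement to a component with minimum degree~$\ge 3$, or produce a low-degree vertex).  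The coloring consequence and other applications are deferred to Section~\ref{sec:col}; here the only content is this structural dichotomy.
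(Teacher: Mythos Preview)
Your outline is essentially the paper's own approach: the $3$-connected case is exactly Theorem~\ref{th:twin}, and for connectivity~$2$ one takes an end $F$ with $N(F)=\{a,b\}$, forms $G_F$ by adding $ab$, notes via Lemma~\ref{l:2conBlock} that $G_F$ is $3$-connected with $W(G_F)\subseteq\{a,b\}$, and then applies Theorem~\ref{th:twin} (or the induction hypothesis). Two points you should be aware of when filling in the details: first, the paper actually proves the stronger statement that there are always \emph{two} disjoint good outcomes (two vertices of degree~$\le 2$, or two disjoint twin pairs, or one of each), and this is what makes the induction go through cleanly when one outcome might land on the separator; second, the bookkeeping you correctly flag as the main obstacle --- ruling out that both twin pairs meet $\{a,b\}$ --- is dispatched not by a general argument but by the specific Lemma~\ref{l:k33-e} (if both pairs hit $\{a,b\}$ one finds a $K_{3,3}\setminus e$ and hence $G_F\cong K_{3,3}$), together with a separate claim (Claim~\ref{c:noPaira}) that in $G_F\setminus ab$ no twin pair contains $a$ or $b$. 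The case split on whether $ab\in E(G)$ and on how many neighbours $a,b$ have in $F$ is needed precisely because $G_F$ is only guaranteed to be \emph{almost} wheel-free (in the paper's sense: $W(G_F)$ consists of at most two adjacent degree-$3$ vertices) when $a$ and $b$ each have at most two neighbours in $F$.
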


As already observed by Thomassen and Toft, this implies the following.
\begin{corollary}
  \label{th:col}
  Every wheel-free graph is 3-colorable.  
\end{corollary}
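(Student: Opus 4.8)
The plan is to prove the corollary by induction on $|V(G)|$, using Theorem~\ref{th:2orT} as the engine. For the base case, every graph on at most three vertices is trivially 3-colorable. So assume that $G$ is a wheel-free graph on $n\geq 4$ vertices and that every wheel-free graph with fewer vertices is 3-colorable. The one general fact I will use repeatedly is that wheel-freeness is inherited by subgraphs, which is immediate since deleting a vertex cannot create a wheel; hence $G-v$ is wheel-free for every $v\in V(G)$.

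First I would apply Theorem~\ref{th:2orT} to $G$ and split into its two cases. If $G$ has a vertex $v$ of degree at most $2$, then $G-v$ is wheel-free, hence 3-colorable by the induction hypothesis, and any proper 3-coloring of $G-v$ extends to $v$ because $v$ has at most two neighbors. This settles the first case.

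Otherwise, Theorem~\ref{th:2orT} provides two non-adjacent vertices $u$ and $v$, both of degree~$3$, with $N_G(u)=N_G(v)=\{x,y,z\}$. Again $G-v$ is wheel-free, so it admits a proper 3-coloring $c$ by the induction hypothesis. I would then simply set $c(v):=c(u)$. Since $uv\notin E(G)$ and $N_G(v)=N_G(u)=\{x,y,z\}$, and since $c$ is proper on $G-v$ (so $c(u)$ differs from each of $c(x)$, $c(y)$, $c(z)$), the extended map is a proper 3-coloring of $G$. Equivalently, $\{c(x),c(y),c(z)\}$ omits the color $c(u)$ and therefore has size at most $2$, leaving a free color for $v$.

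There is essentially no genuine obstacle at this stage: all the work has been absorbed into Theorem~\ref{th:2orT} (and, behind it, the structural material of the earlier sections, in particular the Watkins--Mesner theorem and its application to almost wheel-free graphs). The only points deserving a word of care are the trivial base case and the observation that vertex deletion preserves wheel-freeness, both of which are routine.
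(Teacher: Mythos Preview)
Your proof is correct and follows essentially the same approach as the paper's own proof: induction on $|V(G)|$, applying Theorem~\ref{th:2orT} and handling the two outcomes by deleting a low-degree vertex or by copying the color of a twin. The only cosmetic difference is your base case ($|V(G)|\le 3$ versus the paper's $|V(G)|=1$), which is immaterial.
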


\begin{proof}
  By induction on the number of vertices of a wheel-free graph $G$.
  If $|V(G)| = 1$, then $G$ is 3-colorable.  Otherwise, by
  Theorem~\ref{th:2orT}, either $G$ contains a vertex $w$ of degree at
  most~2, or a pair $\{u, v\}$ of non-adjacent vertices with the same
  neighborhood.  In the first case, we color $G - w$ by the induction
  hypothesis, and give to $w$ one of the three colors not used in its
  neighborhood.  In the second case, we color $G - u$ by the induction
  hypothesis, and give to $u$ the same color as $v$.
\end{proof}

In several papers about Truemper's configurations, rims of wheels are
required to be of length at least~4, i.e.\ $K_4$ does not count as a
wheel.  In Section~\ref{sec:long}, we show that this requirement does
not matter much for what we are doing here: any graph that does not
contain a wheel with a rim of length at least~4 is 4-colorable.

\section{A wheel-free zoo}
\label{sec:zoo}

Wheel-free graphs with quite arbitrary shapes can be obtained by
taking any graph, and subdividing edges until every vertex of degree
at least~3 has all its neighbors (except possibly~2) of degree at
most~2.  Indeed in a graph obtained that way, no vertex can be the
center of a wheel.  But those graphs are not 3-connected (they have
vertices of degree~2).  In Figure~\ref{fig:some3c}, several
3-connected wheel-free graphs are represented.  They all have similar
shapes, but from that similarity, we could not deduce any general
construction for all 3-connected wheel-free graphs.
 \begin{figure}[hbtp]
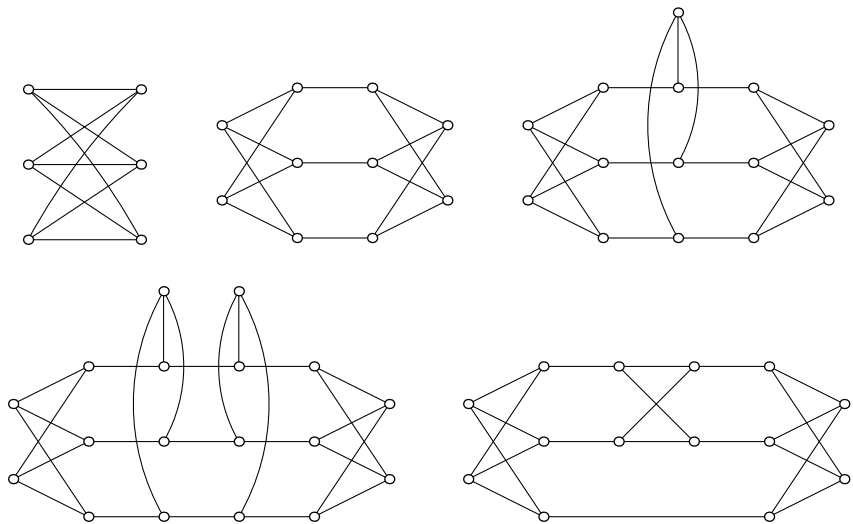

\begin{center}
\includegraphics{figWheels.2}\hspace{2em}
\includegraphics{figWheels.3}\hspace{2em}
\includegraphics{figWheels.4}\vspace{3ex}
\includegraphics{figWheels.5}\hspace{2em}
\includegraphics{figWheels.6}
\end{center}
\caption{Some 3-connected wheel-free graphs.\label{fig:some3c}}
\end{figure}

Note that all graphs from Figure~\ref{fig:some3c} are bipartite.
However, the graph represented in Figure~\ref{fig:cycle} on the left
has a cycle on 15 vertices, while being 3-connected and wheel-free.  On
the right is represented another wheel-free graph, with a seemingly
different shape.
\begin{figure}[hbtpp]
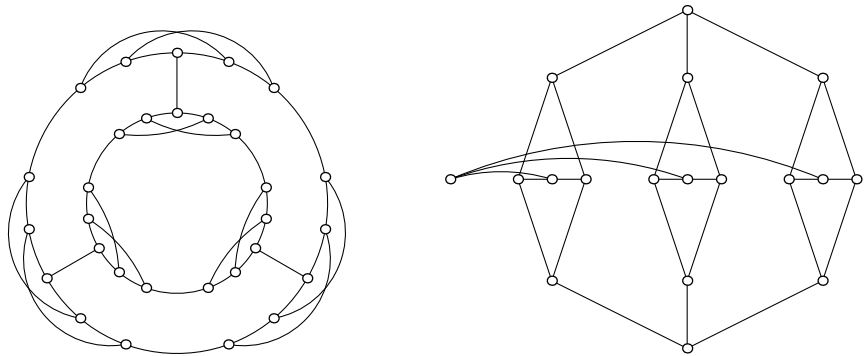

\begin{center}
\includegraphics{figWheels.7}\hspace{3em}
\includegraphics{figWheels.8}
\end{center}
\caption{A 3-connected wheel-free graph with a cycle on 15
  vertices (on the left). On the right, another wheel-free
  3-connected graph.\label{fig:cycle}}
\end{figure}

From all the graphs represented so far and in view of
Theorem~\ref{th:2orT}, it might be asked whether wheel-free graphs
that are subdivisions of 3-connected graphs are the graphs obtained
from a diamond (i.e. the graph obtained from the complete graph on four vertices by removing an edge) by randomly duplicating vertices of degree~3 with
neighbors of degree~2, and subdividing edges.  The graph represented
on the left in Figure~\ref{fig:2con} is a counter-example: it is a
wheel-free subdivision of a 3-connected graph, but it cannot be
obtained that way.  The graph represented on the right is 2-connected
in quite a strong sense: none of its subgraphs is 3-connected.
However, its minimum degree is~3.
\begin{figure}[hbtp]
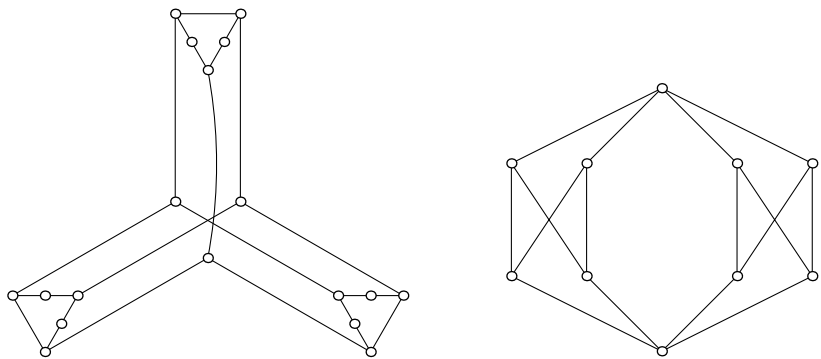

\begin{center}
\includegraphics{figWheels.9}\hspace{3em}
\includegraphics{figWheels.10}
\end{center}
\caption{Wheel-free graphs with connectivity 2\label{fig:2con}}
\end{figure}

\section{Variations on Menger's Theorem}
\label{sec:Menger}

In this section, we present several lemmas that we will need later. They all
follow very easily from Menger's Theorem.  We refer
to~\cite{bondy.murty:book} for the statement of this theorem.

Paths of length 0 are allowed (they are made of one vertex).  We use
the following standard notation: when $G$ is a graph and $X$ a subset
of its vertices, we denote by $G - X$ the graph obtained from $G$ by
deleting vertices from $X$.  When $G - X$ is disconnected, we say that
$X$ is a \emph{cutset}.  When $v$ is a vertex, we sometimes write $G -
v$ instead of $G - \{v\}$.  A \emph{cutvertex} of a graph $G$ is a
vertex such that $G - v$ is disconnected.  When $e$ is an edge of~$G$,
we denote by $G\sm e$ the graph obtained from $G$ by deleting $e$
(note that the ends of $e$ are vertices of $G\sm e$).

In some
situation we need kinds of separations where some sets are allowed to
intersect, so we need to define them precisely.  Let $\{a, b\}$, $\{c,
d\}$ be two sets of vertices of a graph $G$ such that $a\neq b$ and $c\neq
d$.  Note that the two sets may intersect or be equal.  A vertex $v$
is an \emph{$(\{a, b\}, \{c, d\})$-separator} if in $G - v$, there is
no path with one end in $\{a, b\}$ and the other end in $\{c, d\}$.
Note that $v$ can be one of $a, b, c, d$.  The following is a
rephrasing of Menger's Theorem in a particular case.

\begin{lemma}
  \label{l:menger2}
  Let $G$ be a graph and $\{a, b\}$, $\{c, d\}$ two sets of vertices of
  $G$ such that $a\neq b$ and $c\neq d$.  Either there exists an
  $(\{a, b\}, \{c, d\})$-separator or there exists two vertex-disjoint
  paths from $\{a, b\}$ to $\{c, d\}$.
\end{lemma}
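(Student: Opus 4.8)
The plan is to reduce Lemma~\ref{l:menger2} to the standard vertex version of Menger's Theorem by a small gadget construction. First I would handle the degenerate cases directly: if $\{a,b\}$ and $\{c,d\}$ intersect in two vertices (i.e.\ they are equal, or share both elements), then the two common vertices give, as paths of length~$0$, two vertex-disjoint paths from $\{a,b\}$ to $\{c,d\}$, and we are done; if they share exactly one vertex~$x$, then either $x$ together with one length-$0$ path already gives the two disjoint paths we need (using another vertex on one side), or else $x$ itself is an $(\{a,b\},\{c,d\})$-separator, so in every case we are fine after a short argument. So from now on assume $\{a,b\}\cap\{c,d\}=\emptyset$.

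Next I would build an auxiliary graph $G'$ from $G$ by adding two new vertices $s$ and $t$, joining $s$ to both $a$ and $b$, and joining $t$ to both $c$ and~$d$. Since $\{a,b\}$ and $\{c,d\}$ are disjoint, $s\neq t$ and the four new edges are well defined; note also that no edge $st$ is added, so any $s$--$t$ path in $G'$ has an interior, which lies entirely in $V(G)$ and whose first and last interior vertices lie in $\{a,b\}$ and $\{c,d\}$ respectively. The correspondence is now routine: internally-disjoint $s$--$t$ paths in $G'$ correspond exactly to vertex-disjoint paths from $\{a,b\}$ to $\{c,d\}$ in $G$, and a minimum $s$--$t$ vertex cut in $G'$ (which can be taken inside $V(G)$, since replacing $s$ by $a$ or $b$, and $t$ by $c$ or $d$, never increases the cut) corresponds to a set of vertices of $G$ meeting every $\{a,b\}$-to-$\{c,d\}$ path. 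Applying the vertex form of Menger's Theorem to $s$ and $t$ in $G'$, either there are two internally-disjoint $s$--$t$ paths, giving the two vertex-disjoint paths in $G$, or there is an $s$--$t$ cut of size~$1$, i.e.\ a single vertex~$v\in V(G)$ whose deletion destroys all $\{a,b\}$-to-$\{c,d\}$ paths, which is precisely an $(\{a,b\},\{c,d\})$-separator.

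The only mild subtlety, and the step I expect to need the most care, is the bookkeeping when $s$ or $t$ is adjacent to only part of its pair because of coincidences among $a,b,c,d$ — but since we have already reduced to the case where $\{a,b\}$ and $\{c,d\}$ are disjoint, and we always assume $a\neq b$ and $c\neq d$, both $s$ and $t$ have exactly two distinct neighbours and all the gadget edges are genuine, so this subtlety disappears. One also has to observe that a length-$0$ path is permitted here (the excerpt explicitly allows paths of length~$0$), which is what makes the translation of an $s$--$t$ cut of size~$1$ that happens to ``be'' one of $a,b,c,d$ into a valid separator work: the definition of separator explicitly allows $v$ to be one of $a,b,c,d$. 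With these conventions in place the proof is a direct invocation of Menger's Theorem, and nothing further is needed.
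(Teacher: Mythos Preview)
Your argument is correct; the paper itself gives no proof of this lemma beyond declaring it ``a rephrasing of Menger's Theorem in a particular case'', and your source--sink gadget is precisely the standard way to carry out that rephrasing. Your treatment of the degenerate overlapping cases is also fine and in fact more careful than anything the paper provides.
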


Note that the statement above is true when $\{a, b\} = \{c,d\}$.  In
this case, the two paths are of length 0.  

Let $k\geq 1$ be an integer, $G$ a graph, $Y\subseteq V(G)$ a set on
at least $k$ vertices, and $x\notin Y$ a vertex of $G$.  A family of
$k$ paths from $x$ to $Y$ whose only common vertex is $x$ and whose
internal vertices are not in $Y$, is called a \emph{$k$-fan from $x$
  to $Y$}.  The next two results are classical
(see~\cite{bondy.murty:book}).

\begin{lemma}[Fan Lemma] 
  \label{l:fanLemma}
  Let $G$ be a $k$-connected graph, $x$ a vertex of $G$ and $Y$ a
  subset of $V(G) \sm \{x\}$ of cardinality at least $k$. Then
  there is a $k$-fan from $x$ to $Y$.
\end{lemma}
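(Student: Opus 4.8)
The plan is to derive the Fan Lemma from the vertex form of Menger's Theorem (as stated in~\cite{bondy.murty:book}) via an auxiliary-vertex construction. First I would build a graph $G'$ from $G$ by adding one new vertex $y$ adjacent to every vertex of $Y$ and to no other vertex. Since $Y\subseteq V(G)\sm\{x\}$, the vertices $x$ and $y$ are non-adjacent in $G'$, so Menger's Theorem applies: the maximum number of internally vertex-disjoint $x$--$y$ paths in $G'$ equals the minimum size of a vertex set separating $x$ from $y$ in $G'$.

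The key step is to show that no set $S$ with $|S|<k$ separates $x$ from $y$ in $G'$. Such an $S$ contains neither $x$ nor $y$, hence $S\subseteq V(G)$; and since $|Y|\geq k>|S|$, the set $Y\sm S$ is non-empty. If $G-S$ contained a path from $x$ to some vertex of $Y\sm S$, appending the edge from that vertex to $y$ would give an $x$--$y$ path in $G'-S$, a contradiction; so in $G-S$ the vertex $x$ lies in a different component from every vertex of $Y\sm S$. As $x\notin Y$, this exhibits $S$ as a cutset of $G$ of size less than $k$, contradicting that $G$ is $k$-connected. Therefore Menger's Theorem yields $k$ internally disjoint $x$--$y$ paths $P_1,\dots,P_k$ in $G'$.

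Finally I would convert these into a $k$-fan. Each $P_i$ meets $y$ only at one end, so $P_i-y$ is a path starting at $x$ whose last vertex lies in $Y$; truncating it at the first vertex it encounters in $Y$ (reading from $x$) produces a path $Q_i$ from $x$ to some $y_i\in Y$ with no internal vertex in $Y$. The paths $Q_1,\dots,Q_k$ are pairwise internally disjoint because the $P_i$ are, and their endpoints $y_i$ are distinct: if $y_i=y_j$ for $i\neq j$, this common vertex differs from $x$ (it lies in $Y$) and is an internal vertex of both $P_i$ and $P_j$, contradicting internal disjointness. Hence $Q_1,\dots,Q_k$ form the desired $k$-fan from $x$ to $Y$.

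I expect the middle separation argument to be the conceptual heart of the proof, though it is short; the only slightly delicate bookkeeping is in the last paragraph, where one must check simultaneously that the truncated paths avoid $Y$ internally and still reach $k$ distinct vertices of $Y$.
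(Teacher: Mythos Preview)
The paper does not prove the Fan Lemma at all; it merely states it as a classical result and cites~\cite{bondy.murty:book}. Your proof is correct and is essentially the standard textbook argument (auxiliary vertex plus Menger, then truncation), so there is nothing to compare.
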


\begin{theorem}[Dirac \cite{dirac:cycle}, see also~\cite{bondy.murty:book}]
  \label{th:cycleThroughK}
  Let $S$ be a set of $k$ vertices of a $k$-connected graph $G$ where
  $k \ge 2$.  Then there is a cycle of $G$ that contains all the
  vertices of $S$.
\end{theorem}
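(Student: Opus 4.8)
The plan is to prove Dirac's theorem by induction on $k$, with the Fan Lemma (Lemma~\ref{l:fanLemma}) as the workhorse of the induction step. For the base case $k=2$ I would take $S=\{u,v\}$: since $G$ is $2$-connected no single vertex separates $u$ from $v$, so Menger's theorem yields two internally disjoint $u$--$v$ paths, and their union is a cycle through both.

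For the induction step I would assume $k\geq 3$ and that the statement holds for $k-1$, and write $S=\{v_1,\dots,v_k\}$. Since $G$ is $(k-1)$-connected and $k-1\geq 2$, the induction hypothesis provides a cycle $C$ of $G$ through $v_1,\dots,v_{k-1}$; if $v_k\in V(C)$ there is nothing left to prove, so assume $v_k\notin V(C)$. Note $|V(C)|\geq k-1$, and set $t=\min\{k,|V(C)|\}$, so $t\in\{k-1,k\}$. I would apply the Fan Lemma to the vertex $v_k$ and the set $Y=V(C)$ (legitimate, since $G$ is $t$-connected, $v_k\notin Y$ and $|Y|\geq t$), obtaining a $t$-fan from $v_k$ to $C$: $t$ paths meeting pairwise only in $v_k$, with no internal vertex on $C$, ending at $t$ distinct vertices of $C$. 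These $t$ endpoints split $C$ into $t$ arcs, and a pigeonhole count finishes the job: each of $v_1,\dots,v_{k-1}$ lies in the interior of at most one arc (none, if it happens to be a fan-endpoint), so at most $k-1$ arcs have a $v_i$ in their interior; when $t=k$ some arc is therefore free of $v_1,\dots,v_{k-1}$, and when $t=k-1$ one has $V(C)=\{v_1,\dots,v_{k-1}\}$, every $v_i$ is a fan-endpoint, and every arc is free. Picking such a free arc $A$ with ends $a,b$, writing $C=A\cup C''$ (so $C''$ is the other $a$--$b$ path of $C$), and letting $P,Q$ be the fan-paths ending at $a$ and $b$, I would argue that $P\cup Q$ is an $a$--$b$ path through $v_k$ whose only vertices on $C$ are $a$ and $b$, so that $C''\cup P\cup Q$ is a cycle; it contains $v_k$, and it contains every $v_i$ with $i\leq k-1$ because such a $v_i$ lies on $C$ but not in the interior of $A$, hence on $C''$. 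This is the desired cycle, completing the induction.

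The step I expect to need the most care — the ``main obstacle'' — is the rerouting bookkeeping: one must use the disjointness clauses of the Fan Lemma to be certain that the two chosen fan-paths are internally disjoint from $C$ and from each other except at $v_k$, so that $C''\cup P\cup Q$ is an honest cycle rather than a theta-graph, and one must choose $A$ so that deleting its interior destroys none of $v_1,\dots,v_{k-1}$. The degenerate case $|V(C)|=k-1$ has to be singled out, since there the target set $V(C)$ is too small to carry a $k$-fan and one argues instead with a $(k-1)$-fan; luckily the pigeonhole count degenerates in one's favour in exactly that case. Everything else is routine.
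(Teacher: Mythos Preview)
The paper does not supply its own proof of Theorem~\ref{th:cycleThroughK}: it states the result as classical and refers the reader to Dirac~\cite{dirac:cycle} and Bondy--Murty~\cite{bondy.murty:book}. So there is nothing in the paper to compare your argument against.

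That said, your proof is correct and is essentially the standard textbook argument (the one found in~\cite{bondy.murty:book}): induct on $k$, use the induction hypothesis to get a cycle through $k-1$ of the vertices, then use the Fan Lemma to attach the last vertex and reroute around a free arc. Your handling of the degenerate case $|V(C)|=k-1$ is clean, and the pigeonhole bookkeeping is right. Nothing to fix.
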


Let $C$ be a cycle of a graph $G$ and $v$ a vertex not in $C$.  We say
that a set on at most two vertices $\{a, b\}$ is a
\emph{$(v,C)$-separator} if $v \notin \{a, b\}$ and if $G - \{a, b\}$
contains no path from $v$ to $V(C)\sm \{a, b\}$.  Note that $\{a, b\}$
and $C$ may intersect.  The following is another rephrasing of
Menger's Theorem in a particular case.

\begin{lemma}
  \label{l:fan}
  In a graph $G$, if $C$ is a cycle and $v$ is a vertex not in $C$
  such that there exists no $(v, C)$-separator, then $G$ contains a
  3-fan from $v$ to $C$.
\end{lemma}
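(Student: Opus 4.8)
The plan is to prove the contrapositive: assuming that $G$ contains no 3-fan from $v$ to $C$, I will exhibit a $(v, C)$-separator. The key idea is to contract the whole cycle into a single ``apex'' vertex, so that a 3-fan becomes an ordinary family of three internally vertex-disjoint paths between two fixed vertices, a situation governed directly by Menger's Theorem (see~\cite{bondy.murty:book}).

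First I would form the graph $G'$ obtained from $G$ by adding a new vertex $y$ adjacent to every vertex of $V(C)$. I claim that $G$ contains a 3-fan from $v$ to $C$ if and only if $G'$ contains three internally vertex-disjoint paths from $v$ to $y$. The forward direction is immediate: append to each path of the fan the edge joining its endpoint on $C$ to $y$. For the converse, take three internally vertex-disjoint paths $P_1, P_2, P_3$ from $v$ to $y$ in $G'$; since $v \notin V(C)$ and the only neighbours of $y$ lie in $V(C)$, each $P_i$ meets $V(C)$, so let $w_i$ be the first vertex of $V(C)$ encountered along $P_i$ starting from $v$, and replace $P_i$ by its initial segment from $v$ to $w_i$. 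By the choice of $w_i$ these segments have no internal vertex in $V(C)$, and since $P_i$ and $P_j$ meet only in $\{v, y\}$, the segments meet only in $v$; in particular $w_1, w_2, w_3$ are distinct, so the three segments form a 3-fan from $v$ to $C$.

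Next, assuming no such 3-fan exists, $G'$ has no three internally vertex-disjoint paths from $v$ to $y$ (note $v$ and $y$ are non-adjacent in $G'$), so by Menger's Theorem there is a set $S \subseteq V(G')$ with $|S| \le 2$ and $v, y \notin S$ such that $G' - S$ has no path from $v$ to $y$. Since $|V(C)| \ge 3 > |S|$, I can enlarge $S$ by adding one or two suitable vertices so as to obtain a set $\{a, b\}$ of two distinct vertices with $v \notin \{a, b\}$ and $V(C) \setminus \{a, b\} \neq \emptyset$; enlarging a separating set keeps it separating, so $G' - \{a, b\}$ still has no path from $v$ to $y$. Finally, if $G - \{a, b\}$ contained a path from $v$ to some vertex $w \in V(C) \setminus \{a, b\}$, then appending the edge $wy$ (present since $y \notin \{a, b\}$) would give a path from $v$ to $y$ in $G' - \{a, b\}$, a contradiction. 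Hence $\{a, b\}$ is a $(v, C)$-separator, contradicting the hypothesis.

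The only mildly delicate points are bookkeeping ones: verifying that the truncation step in $G'$ genuinely produces a fan (distinct landing vertices on $C$, internal vertices off $C$), and checking that padding the Menger cutset up to exactly two vertices is harmless, which works precisely because $C$ has at least three vertices. I do not expect a genuine obstacle beyond making these verifications precise.
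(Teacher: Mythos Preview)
Your proof is correct. The paper does not actually give a proof of this lemma: it simply introduces it with the sentence ``The following is another rephrasing of Menger's Theorem in a particular case'' and states it without argument. Your apex-vertex construction is exactly the standard way one unpacks such a rephrasing, so your approach is in the same spirit as what the paper intends, just made explicit.

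One minor simplification: the paper's definition of a $(v,C)$-separator already allows a set of \emph{at most} two vertices, so the enlargement step is unnecessary---the Menger cutset $S$ (with $|S|\le 2$, $v\notin S$) is itself a $(v,C)$-separator, since any path in $G-S$ from $v$ to $V(C)\setminus S$ would extend via an edge to $y$ to a $v$--$y$ path in $G'-S$.
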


\begin{lemma}
  \label{l:2cutsetCycle}
  Let $G$ be a graph, $C$ a cycle of $G$ and $x$ a vertex not in $C$
  such that there exists no ($x$, $C$)-separator.  Let $y, z$ be two
  distinct vertices of $C$.  Then there exists a cycle of $G$ that
  goes through $x$, $y$ and $z$.
\end{lemma}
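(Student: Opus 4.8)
The plan is to use Lemma~\ref{l:fan} to produce a 3-fan from $x$ to $C$, and then combine it with a suitable arc of $C$ to build the desired cycle through $x$, $y$ and $z$. First I would invoke Lemma~\ref{l:fan}: since there is no $(x,C)$-separator, $G$ contains a 3-fan, say with paths $P_1$, $P_2$, $P_3$ from $x$ to $C$, meeting $C$ exactly at their endpoints $a_1$, $a_2$, $a_3$ respectively. These three points split $C$ into three arcs. The naive idea is that $y$ and $z$ each lie on one of these arcs, and if they lie on two different arcs delimited by (say) $a_i$ and $a_j$ with the third point $a_k$ on a third arc, then $P_i \cup P_j$ together with the arc of $C$ from $a_i$ to $a_j$ passing through both $y$ and $z$ (i.e.\ avoiding $a_k$) forms a cycle through $x$, $y$, $z$. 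The difficulty is the degenerate configurations: $y$ or $z$ may coincide with one of the $a_i$, or both $y$ and $z$ may lie on the same arc, in which case the two points of attachment of that arc do not separate $y$ from $z$.

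To handle these cases cleanly, the better approach is to apply Lemma~\ref{l:fan} more carefully, or rather to reduce to a two-paths statement. Consider the cycle $C$ and think of $x$ together with the requirement of hitting $y$ and $z$. I would argue as follows: by Lemma~\ref{l:fan} there is a 3-fan from $x$ to $C$. Among the three attachment vertices $a_1,a_2,a_3$, at least one of them, say $a_3$, is not an interior vertex of the arc of $C$ strictly between $y$ and $z$ on one side --- more precisely, since $y$ and $z$ cut $C$ into two arcs $Q$ and $Q'$, and the three points $a_1,a_2,a_3$ (possibly with repetitions among $\{y,z\}$) cannot all be interior to the same one of $Q,Q'$ unless... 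Actually the clean way: at least two of $a_1,a_2,a_3$ lie in $V(Q)$ or at least two lie in $V(Q')$; say $a_1,a_2\in V(Q)$ (where $V(Q)$ includes the endpoints $y,z$). Then consider $G'=G\setminus E(Q')$ where $Q'$ is the \emph{open} arc (we keep $y$, $z$ but delete the edges of the other arc, or rather we just work inside $Q$). Now $P_1\cup P_2\cup Q$ is a subdivision of a cycle-with-a-chord-type object; $x$ reaches $Q$ at $a_1$ and $a_2$, and $Q$ is an arc with endpoints $y,z$ containing $a_1,a_2$. The union $P_1\cup P_2\cup Q$ contains exactly one cycle through $x$ only if $a_1=a_2$; in general $P_1,P_2,Q$ form a theta-like graph on $x,a_1,a_2$ (when $a_1\ne a_2$) plus pendant pieces of $Q$ hanging at $a_1$ and $a_2$ reaching $y$ and $z$. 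Traversing: go from $x$ along $P_1$ to $a_1$, along $Q$ toward $y$ (the $a_1$-side of $Q$), this does not yet close up --- so instead: $y$ and $z$ are the endpoints of $Q$, and $a_1,a_2$ are two vertices on $Q$; walk $y \to a_1$ along $Q$, $a_1 \to x$ along $P_1$, $x\to a_2$ along $P_2$, $a_2 \to z$ along $Q$; this is a path from $y$ to $z$ avoiding the open arc $Q'$, and then closing via $Q'$ (the $y$--$z$ arc of $C$ not used) gives a cycle through $x$, $y$, $z$, provided $P_1, P_2$ and the relevant sub-arcs are internally disjoint --- which they are since $P_1,P_2$ are fan paths and the sub-arcs of $Q$ used are $[y,a_1]$ and $[a_2,z]$, internally disjoint from each other and from $P_1\cup P_2$ except at $a_1,a_2$. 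The only thing to check is that $[y,a_1]$ and $[a_2,z]$ do not overlap, i.e.\ that along $Q$ from $y$ to $z$ we meet $a_1$ before $a_2$; since both are on $Q$ we may simply relabel so that the order along $Q$ is $y, a_1, a_2, z$ (allowing $a_1=y$, $a_2=z$, or $a_1=a_2$, all of which still yield a valid cycle).

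The main obstacle is thus purely the bookkeeping of degenerate overlaps: ensuring the walk described above is actually a cycle (no repeated vertices) when some of $a_1,a_2,y,z$ coincide, and verifying that ``at least two of the three fan-endpoints lie on the same $y$--$z$ arc of $C$'' --- which is immediate by pigeonhole once one notes each $a_i$ lies in $V(Q)\cup V(Q')$ and $V(Q)\cap V(Q')=\{y,z\}$, with a tiny bit of care if some $a_i\in\{y,z\}$ (such an $a_i$ can be assigned to whichever arc is convenient). I would write the argument by fixing the cyclic order, choosing the two fan-paths landing on the same closed arc $Q$, and then exhibiting the cycle as $P_1 \cup P_2 \cup Q[y,a_1] \cup Q[a_2,z] \cup Q'$, checking internal disjointness from the fan property. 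No heavy machinery beyond Lemma~\ref{l:fan} is needed.
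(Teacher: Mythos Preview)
Your proposal is correct and is essentially the same argument as the paper's: split $C$ at $y,z$ into two arcs, use Lemma~\ref{l:fan} to get a 3-fan, pigeonhole two of its endpoints onto one arc, relabel so they appear in order $y,a_1,a_2,z$ along that arc, and read off the cycle. The paper handles the degenerate coincidences you worry about simply by allowing equalities in the phrase ``appear in this order,'' so your extended discussion of overlaps is unnecessary but not wrong.
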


\begin{proof}
  Cycle $C$ is edge-wise partitioned into two paths $Q =y\dots z$ and
  $R = y \dots z$.  By Lemma~\ref{l:fan}, there exists a 3-fan made of
  $P_1 = x \dots c_1$, $P_2 = x \dots c_2$ and $P_3 = x \dots c_3$,
  from $x$ to $C$.  From the pigeon-hole principle, at least two
  vertices of $\{c_1, c_2, c_3\}$ are in $Q$ or in $R$, say $c_1, c_2
  \in Q$.  Suppose up to a relabelling that $y, c_1, c_2, z$ appear in
  this order along $Q$.  Then $yQc_1P_1xP_2c_2QzRy$ is a cycle that
  goes through $x, y, z$.
\end{proof}

The following is the basic tool to characterize the situation when no
cycle goes through three given vertices of a 2-connected graph.  Note
that contrary to Theorem~\ref{th:3inaCycle}, it is not an ``if and only statement''.

\begin{lemma}
  \label{l:cycleOrTheta}
  Let $G$ be a 2-connected graph and $x, y, z$ be three vertices of~$G$.  Then either
  \begin{itemize}
  \item a cycle of $G$ goes through $x, y, z$; or
  \item $x, y, z$ are distinct and there exist two distinct vertices
    $t_A, t_B \notin \{x, y, z\}$ and six internally vertex-disjoint
    paths $P_A = t_A \dots x$, $P_B = t_B \dots x$, $Q_A = t_A \dots
    y$, $Q_B = t_B \dots y$, $R_A = t_A \dots z$ and $R_B = t_B \dots
    z$.
 \end{itemize}
\end{lemma}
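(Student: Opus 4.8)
The plan is to deal with degeneracies first and then to work with a cycle through two of the three vertices together with a Menger-type separator of the third vertex from that cycle. If $x,y,z$ are not pairwise distinct, then at most two distinct vertices are involved, and since $G$ is $2$-connected, Theorem~\ref{th:cycleThroughK} (applied with $k=2$, or, for a single vertex, with that vertex together with any other one) produces a cycle of $G$ through all of them, so the first outcome holds. Assume henceforth that $x,y,z$ are pairwise distinct and that no cycle of $G$ meets all three. By Theorem~\ref{th:cycleThroughK} with $k=2$ there is a cycle $C$ through $y$ and $z$, and $x\notin V(C)$. Since no cycle passes through $x,y,z$, the contrapositive of Lemma~\ref{l:2cutsetCycle} gives an $(x,C)$-separator, and by $2$-connectedness this separator has exactly two vertices $\{a,b\}$ with $a\neq b$ and $x\notin\{a,b\}$. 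Let $B$ consist of $\{a,b\}$ together with the component of $G-\{a,b\}$ containing $x$; then $V(C)\cap B\subseteq\{a,b\}$. Taking two internally disjoint paths from $x$ to a vertex $v$ on the far side of $\{a,b\}$ and truncating each at its first vertex of $\{a,b\}$, we obtain a $2$-fan from $x$ to $\{a,b\}$ inside $B$: internally disjoint paths $P_A=x\dots a$ and $P_B=x\dots b$. Because $\{a,b\}$ separates $x$ from $V(C)\setminus\{a,b\}$, no internal vertex of $P_A$ or $P_B$ lies on $C$, so each of $P_A,P_B$ meets $V(C)$ at most in its endpoint.

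First I would check that $a,b\notin\{y,z\}$: if, say, $a\in\{y,z\}$, then $C$ contains an arc from $a$ either to $b$ (if $b\in V(C)$) or to the first point $w$ at which some path from $b$ to $C$ on the far side of $\{a,b\}$ meets $C$ (if $b\notin V(C)$) that passes through the remaining vertex of $\{y,z\}$; concatenating this arc, that path (when $b\notin V(C)$), $P_B$ and $P_A$ yields a cycle through $x,y,z$, a contradiction. So $a,b\notin\{x,y,z\}$, and we set $t_A:=a$, $t_B:=b$. Next, suppose $a,b\in V(C)$, so that $C-\{a,b\}$ splits into two arcs. If $y$ and $z$ lie on the same arc, that arc with $a,b$ adjoined is an $a$-to-$b$ path through both $y$ and $z$, and closing it up with $P_B$ and $P_A$ gives a cycle through $x,y,z$, a contradiction; hence $y$ and $z$ lie on different arcs $\sigma_1$ (containing $y$) and $\sigma_2$ (containing $z$), each taken with $a,b$ adjoined. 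Then $P_A$, $P_B$, the two subpaths of $\sigma_1$ from $a$ to $y$ and from $y$ to $b$, and the two subpaths of $\sigma_2$ from $a$ to $z$ and from $z$ to $b$ are six internally disjoint paths (the $\sigma_i$ overlap only in $\{a,b\}$, and $P_A,P_B$ meet $C$ only at $a,b$), which is exactly the second outcome.

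It remains to reduce the case in which a separator vertex lies off $C$, say $b\notin V(C)$, to the case just treated. Here I would fix the pair $(C,\{a,b\})$ extremally --- for instance with the number of vertices strictly inside $B$ as small as possible --- and then reroute: since $G-a$ is connected, $b$ reaches $V(C)$ along a path $P$ on the far side of $\{a,b\}$ avoiding $a$ and $x$, and using $P$ one either constructs an $a$-to-$b$ path through $y$ and $z$ lying off the $x$-side of $\{a,b\}$ (and then $P_B,P_A$ close it into the forbidden cycle through $x,y,z$) or builds a new cycle through $y,z$ still avoiding $x$ but now passing through $b$, for which $\{a,b\}$ is still an $(x,\cdot)$-separator and for which a separator vertex has moved onto the cycle; the extremal choice is what forces this process to terminate in the already-settled case $\{a,b\}\subseteq V(C)$.

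The routine parts are the degenerate case, the extraction of the $2$-fan, and reading off the six paths once both separator vertices sit on $C$. The main obstacle is the final reduction: keeping control of the situation where a separator vertex is off the cycle, and in particular choosing $(C,\{a,b\})$ extremally so that the rerouting argument provably either exposes a cycle through $x,y,z$ or pushes a separator vertex onto the cycle without looping. This is the delicate step around which the whole proof has to be organised.
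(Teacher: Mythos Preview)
Your strategy---cycle through two of the three vertices, then link in the third---matches the paper's, but you introduce an unnecessary difficulty by working with an $(x,C)$-separator $\{a,b\}$ rather than a $2$-fan from the third vertex to $C$. A separator may have vertices off $C$, and that is precisely the case you cannot close: your extremal/rerouting sketch does not go through as stated. For instance, take $a\in V(C)$, $b\notin V(C)$, a path $P$ from $b$ to $w\in V(C)$ on the far side, and $y,z$ on different arcs of $C$ between $a$ and $w$. Then there is no $a$--$b$ path through both $y$ and $z$ available, and with only the single path $P$ out of $b$ you cannot build a new cycle through $y$, $z$ and $b$ either; moreover, replacing $b$ by $w$ need not yield an $(x,C)$-separator, while your extremal parameter (size of the $x$-side) moves the wrong way under the natural replacement. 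So the ``delicate step'' you flag is a genuine gap, not just bookkeeping.

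The paper sidesteps all of this in one line: take the cycle $C$ through two of the vertices (it uses $x$ and $z$), write $C=S_A\cup S_B$ as two $x$--$z$ arcs, and take a $2$-fan $Q_A=y\dots t_A$, $Q_B=y\dots t_B$ from $y$ to $C$ (Fan Lemma, since $G$ is $2$-connected and $y\notin V(C)$). The endpoints $t_A,t_B$ lie on $C$ by construction, so there is no off-cycle case. If $t_A,t_B$ land on the same arc, splicing the fan into that arc gives a cycle through $x,y,z$; otherwise $t_A\in S_A\setminus\{x,z\}$ and $t_B\in S_B\setminus\{x,z\}$, and the six required paths are $Q_A,Q_B$ together with the four subarcs $xS_At_A$, $zS_At_A$, $xS_Bt_B$, $zS_Bt_B$. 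In short: replace your separator by a $2$-fan and the proof collapses to the easy case you already handled.
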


\begin{proof}
  Since $G$ is 2-connected, we know that $x$, $y$ and $z$ are distinct
  (or a cycle goes through them) and there exists a cycle $C$ that
  goes through $x, z$.  Cycle $C$ is edge-wise partitioned into two
  paths $S_A$ and $S_B$ from $x$ to~$z$.  Since $G$ is 2-connected, if
  $y\notin V(C)$, then there exists a 2-fan from $y$ to $C$, formed by
  $Q_A = y \dots t_A$ and $Q_B = y \dots t_B$ say.  If $t_A, t_B \in
  V(S_A)$, then up to symmetry, $x, t_A, t_B, y$ appear in this order
  along $S_A$ and $x S_A t_A Q_A y Q_B t_B S_A z S_B x$ is a cycle
  through $x, y, z$.  Similarly, if $t_A, t_B \in V(S_B)$, then one
  finds such a cycle.  Hence, we may assume $t_A \in V(S_A) \sm \{x,
  z\}$ and $t_B \in V(S_B)\sm \{ x, z\}$.  We let $P_A = x S_A t_A$,
  $R_A = z S_A t_A$, $P_B = x S_B t_B$ and $R_B = z S_B t_B$.
\end{proof}

\section{Connectivity of wheel-free graphs}
\label{s:sd}

The connectivity of a graph $G$ is denoted by $\kappa(G)$.  An edge
$e$ of a graph $G$ is \emph{essential} if $\kappa(G \sm e) <
\kappa(G)$.  A graph with connectivity $k$ and such that all its edges
are essential is \emph{minimally $k$-connected}.  Our goal in this
section is to prove that every 3-connected wheel-free graph is
minimally 3-connected.  This will be of use because of the following
well known theorems.

\begin{theorem}[Mader \cite{mader:mkc}, see also \cite{bollobas:egt}] 
  \label{maderCycle}
  If $G$ is a minimally 3-connected graph, then every cycle of $G$
  contains a vertex of degree~3.
\end{theorem}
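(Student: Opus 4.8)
The theorem is due to Mader; my plan is a minimal-counterexample argument driven by edge-criticality. Suppose $G$ is minimally $3$-connected and $C$ is a cycle all of whose vertices have degree at least~$4$; I take $C$ of minimum length (so that $C$ is chordless, which is convenient below). The basic fact I would use repeatedly is: for every edge $e=uv$, since $e$ is essential and deleting an edge lowers connectivity by at most one, $\kappa(G\setminus e)=2$; moreover $G\setminus e$ is $2$-connected, because a cutvertex $w$ of $G\setminus e$ would make $\{w,u\}$ (or $\{w,v\}$) a $2$-cut of $G$. So $G\setminus e$ has a $2$-element cutset $T_e$, and one checks that $T_e\cap\{u,v\}=\emptyset$ and that $u$ and $v$ lie in different components of $(G\setminus e)-T_e$ — otherwise $T_e$ would separate $G$. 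If $A$ denotes the component containing $u$ then $\deg_G(u)\le|A|+2$, so $|A|\ge2$, and $T_e\cup\{u\}$ is a minimum cutset of $G$ with the nonempty set $A\setminus\{u\}$ on one side.

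Next I would run the minimality argument on the ``small side''. Among all edges $e=uv$ of $C$, all cutsets $T_e$ of $G\setminus e$ as above, and the side $A$ of the resulting separation containing an endpoint $u$ of $e$, pick a choice with $|A|$ minimum. Since $\deg_G(u)\ge4$ while $u$ has at most the three neighbours $v$ and the two vertices of $T_e$ outside $A$, the vertex $u$ has a neighbour in $A\setminus\{u\}$; in particular $A$ contains an edge. Better still, the $C$-neighbour $w$ of $u$ other than $v$ satisfies $w\in N(u)\subseteq A\cup T_e\cup\{v\}$ and $w\neq v$, so $w\in A\cup T_e$, and in the principal case $w\in A$ an edge $f$ of $C$ has both ends in $A$. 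The role of minimality of $|A|$ is the usual one, and it dovetails with where degree~$3$ enters the whole theorem: the only obstruction to shrinking a piece is that it contains no edge, but then it is a single vertex all of whose $\le3$ neighbours lie in a $3$-element set, hence a vertex of degree~$3$ — impossible for a vertex of $C$.

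Applying edge-criticality to $f$ now yields a $2$-element cutset $T_f$ of $G\setminus f$ separating the two ends of $f$, and the crux is to \emph{uncross} the separation $(A,\ T_e\cup\{u\})$ with the separation coming from $T_f$. By submodularity of the vertex-neighbourhood function, intersecting $A$ with the side of $T_f$ not containing $u'$ (or taking the matching union) produces a separation of $G$ of order at most~$3$, one of whose sides is a proper nonempty subset of $A$; taking some care, this side can be arranged to contain an endpoint of an edge of $C$, contradicting the minimality of $|A|$. Making this uncrossing precise, keeping the cycle $C$ genuinely present in the smaller configuration, and disposing of the degenerate cases ($G\setminus e$ having a cutvertex rather than a $2$-cutset, a side equal to a single vertex, $T_e$ meeting $C$, or $C$ a triangle) is the main obstacle — and essentially the whole content, since once the bookkeeping is in place the contradiction is forced by the degree hypothesis exactly as in the single-vertex case above.

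Finally, I would remark that the cleanest packaging of all this is the stronger assertion, also Mader's, that in a minimally $3$-connected graph the vertices of degree at least~$4$ induce a forest: a cycle through such vertices is precisely a cycle of $G$ containing no vertex of degree~$3$, and conversely, so the forest statement is equivalent and is proved by the very same fragment/uncrossing analysis applied to a shortest cycle through vertices of degree $\ge4$.
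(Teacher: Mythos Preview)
The paper does not prove this theorem: it is quoted as a known result of Mader, with references to \cite{mader:mkc} and \cite{bollobas:egt}, and is used only as a black box in the proofs of Theorem~\ref{th:deg3} and Lemma~\ref{l:denseT}. So there is no argument in the paper to compare your proposal against.

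On the merits of the proposal itself: the overall strategy is the standard one. Your preliminary observations are correct --- for an essential edge $e=uv$ one has $\kappa(G\setminus e)=2$, a $2$-cutset $T_e$ disjoint from $\{u,v\}$ separating $u$ from $v$, and hence a $3$-cutset $T_e\cup\{u\}$ of $G$ with a fragment $A\setminus\{u\}$ of size at least $1$; minimising over such fragments that meet $C$ and then uncrossing with the separation coming from a second cycle edge is exactly the fragment/submodularity machinery behind Mader's theorem. The reformulation at the end (vertices of degree at least $4$ induce a forest) is also correct and equivalent.

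That said, what you have written is an outline rather than a proof. You yourself flag that ``making this uncrossing precise\dots\ is the main obstacle --- and essentially the whole content'', and indeed the work you have not done is the work: handling the case $w\in T_e$ (when the other $C$-neighbour of $u$ lands in the cutset rather than in $A$), verifying that the intersected fragment is nonempty and strictly smaller, and ensuring it still contains an endpoint of a $C$-edge, all require care. None of these steps is supplied. So the approach is right, but as it stands this is a plan, not a proof.
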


\begin{theorem}[Mader \cite{mader:mkc}, see also \cite{bollobas:egt}] 
  \label{mader}
  If $G$ is a minimally 3-connected graph, then $G$ has at least
  $\frac{2|V(G)|+2}{5}$ vertices of degree~3.
\end{theorem}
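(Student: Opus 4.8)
\medskip
\noindent
The plan is to deduce this bound from Theorem~\ref{maderCycle} by an elementary double-counting argument. Write $n=|V(G)|$, let $V_3$ denote the set of vertices of $G$ of degree exactly $3$, and put $t=|V_3|$. Since a $3$-connected graph has minimum degree at least~$3$, every vertex of $G$ that is not in $V_3$ has degree at least~$4$.

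First I would show that $G-V_3$ is a forest: if it contained a cycle $C$, then $C$ would be a cycle of $G$ none of whose vertices has degree~$3$ in $G$, contradicting Theorem~\ref{maderCycle}. In particular, if $t<n$ then the nonempty forest $G-V_3$ has at most $n-t-1$ edges. If $t=n$ there is nothing to prove, since $n\ge\frac{2n+2}{5}$ for every $n\ge 1$; so assume $t<n$ from now on.

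Next I would estimate $|E(G)|$ from above. Each edge of $G$ either lies inside $G-V_3$ or has an end in $V_3$; edges of the first kind number at most $n-t-1$, and edges of the second kind number at most $\sum_{v\in V_3}\deg_G(v)=3t$ (an edge with both ends in $V_3$ being merely counted twice), whence $|E(G)|\le (n-t-1)+3t=n+2t-1$. On the other hand, counting degrees gives $2|E(G)|=\sum_{v\in V(G)}\deg_G(v)\ge 3t+4(n-t)=4n-t$. Combining the two estimates yields $4n-t\le 2(n+2t-1)$, that is, $2n+2\le 5t$, which is precisely the desired inequality $t\ge\frac{2n+2}{5}$.

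Thus the only step that is not pure bookkeeping is the acyclicity of $G-V_3$, which is exactly what Theorem~\ref{maderCycle} provides; once that theorem is in hand there is no genuine obstacle, and the difficulty has effectively been pushed into proving Theorem~\ref{maderCycle} itself. Note that the crude counting above is calibrated to the constant $\frac{2n+2}{5}$ quoted here; a finer analysis of the forest $G-V_3$ and of how it attaches to $V_3$ would improve the constant, but this weaker bound already suffices for our purposes.
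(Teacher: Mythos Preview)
Your argument is correct. The paper, however, does not supply its own proof of this theorem: it is quoted as a known result of Mader, with a reference to Bollob\'as's monograph, and is used as a black box throughout (in Theorem~\ref{th:deg3}, Lemma~\ref{l:denseT}, and implicitly via Corollary~\ref{col:3con}). So there is no in-paper proof to compare against.

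That said, your derivation is essentially the standard one: Theorem~\ref{maderCycle} forces $G-V_3$ to be acyclic, and then the double count
\[
4n-t \;\le\; 2|E(G)| \;\le\; 2\bigl((n-t-1)+3t\bigr)
\]
immediately gives $t\ge (2n+2)/5$. The only point worth flagging is that you are assuming Theorem~\ref{maderCycle}, which the paper also takes on faith; as you yourself observe, the real work in Mader's theorem lies there, and your reduction is exactly the textbook route from~\ref{maderCycle} to~\ref{mader}.
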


 For every graph $G$, we denote by $W(G)$ the set of
all vertices $u$ of $G$ such that at least one wheel of $G$ is
centered at~$u$.

\begin{lemma}
  \label{l:no4}
  If $G$ is a $4$-connected graph, then $W(G)=V(G)$.  In particular a
  wheel-free graph has connectivity at most~3.
  \end{lemma}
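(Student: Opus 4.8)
The plan is to show that if $G$ is $4$-connected, then every vertex $u$ of $G$ is the center of some wheel, i.e.\ there is a chordless cycle $C$ avoiding $u$ on which $u$ has at least three neighbors. Fix $u \in V(G)$ and let $N = N(u)$; since $G$ is $4$-connected, $|N| \geq 4$. The natural first step is to produce a cycle $C$ in $G - u$ that contains at least three neighbors of $u$: because $G$ is $4$-connected, $G - u$ is $3$-connected, so by Theorem~\ref{th:cycleThroughK} (Dirac) applied to any three vertices of $N$ inside $G - u$, there is a cycle of $G - u$ through three chosen neighbors of $u$. Among all such cycles, choose one, call it $C$, that has the fewest vertices (or, more carefully, is minimal with respect to containing at least three neighbors of $u$ while avoiding $u$).

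The main obstacle is that this minimal cycle $C$ need not be chordless, and $u$ may have more than three neighbors on it, so $(C,u)$ is not yet literally a wheel as we want it presented. The key step is to clean up $C$ using chords. Suppose $C$ has a chord $e = ab$. Then $C \cup e$ contains three distinct cycles, each shorter than $C$ is in at least one of the two ``sides'', and more to the point $C \cup e$ is edge-partitioned by $a,b$ into two arcs $C_1, C_2$, giving cycles $C_1 + e$ and $C_2 + e$. Since $u$ has at least three neighbors on $C = C_1 \cup C_2$, at least two of them lie on a common arc, hence at least one of $C_1 + e$, $C_2 + e$ contains (together with $a$ and $b$ as candidate endpoints) enough neighbors of $u$; by iterating and using $4$-connectivity to guarantee a replacement cycle still avoiding $u$ when necessary, one drives down to a chordless cycle. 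I would argue that a shortest cycle through three neighbors of $u$ in $G-u$ that is \emph{induced} in $G-u$ exists: start from a shortest cycle through three neighbors of $u$ and repeatedly shortcut across chords while maintaining the property of meeting $N$ in at least three vertices — each shortcut strictly decreases the length, so the process terminates at a chordless cycle $C'$ with $|N \cap V(C')| \geq 3$ and $u \notin V(C')$. Then $(C', u)$ is a wheel centered at $u$, so $u \in W(G)$.

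To make the shortcutting rigorous, here is the structure: let $C$ be a shortest cycle of $G - u$ with $|N(u) \cap V(C)| \geq 3$. If $C$ has a chord $ab$, write $C = a P_1 b P_2 a$ where $P_1, P_2$ are the two $a$–$b$ subpaths of $C$. The cycle $C_i' = a P_i b a$ (adding the chord) is shorter than $C$ for each $i$, so by minimality $|N(u) \cap V(C_i')| \leq 2$ for both $i$. But $N(u) \cap V(C) = (N(u) \cap V(P_1)) \cup (N(u) \cap V(P_2))$ and every vertex of $C$ other than $a,b$ lies in exactly one $P_i$, so $|N(u) \cap V(C)| \leq |N(u) \cap V(C_1')| + |N(u) \cap V(C_2')| \leq 4$, with equality forcing $a, b \in N(u)$ and exactly one further neighbor on each $P_i$. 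In that tight case, though, pick the $P_i$ containing a neighbor $w$ of $u$; then $a, w, b$ are three neighbors of $u$ on the \emph{shorter} cycle $C_i'$, contradicting minimality of $C$. Hence $C$ is chordless, $u \notin V(C)$, $|N(u) \cap V(C)| \geq 3$, so $(C, u)$ is a wheel and $u \in W(G)$. As $u$ was arbitrary, $W(G) = V(G)$; and a wheel-free graph $G$ has $W(G) = \emptyset \neq V(G)$, so it cannot be $4$-connected, i.e.\ $\kappa(G) \leq 3$.
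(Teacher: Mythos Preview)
Your first step --- applying Dirac's theorem (Theorem~\ref{th:cycleThroughK}) in the $3$-connected graph $G-u$ to get a cycle $C$ through three neighbours of $u$ --- is exactly what the paper does, and it is already the entire proof. Recall that in this paper ``contains'' means \emph{as a subgraph}, not as an induced subgraph. Thus the subgraph of $G$ with vertex set $V(C)\cup\{u\}$ and edge set $E(C)\cup\{ux,uy,uz\}$ is literally a wheel: its rim $C$ is a chordless cycle \emph{in that subgraph} (we did not include any chords), and $u$ has three neighbours on it. So $u\in W(G)$ immediately, with no need to make $C$ induced in $G$.

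Your additional work, trying to force $C$ to be chordless in $G$, is therefore unnecessary --- and it also has a gap. Your shortcut argument claims that a shortest cycle of $G-u$ meeting $N(u)$ in at least three vertices must be induced, but this is false in general. Take $C=v_1v_2v_3v_4v_5v_6v_1$ with the single chord $v_2v_5$, and suppose $N(u)\cap V(C)=\{v_1,v_3,v_5\}$. The two cycles obtained from the chord are $v_2v_3v_4v_5v_2$ and $v_2v_1v_6v_5v_2$, and each meets $N(u)$ in only two vertices, so neither contradicts minimality; yet $C$ is not induced. The error is in the line ``with equality forcing $a,b\in N(u)$'': by inclusion--exclusion one has
\[
|N(u)\cap V(C)| \;=\; |N(u)\cap V(C_1')|+|N(u)\cap V(C_2')|-|N(u)\cap\{a,b\}|,
\]
so the bound $|N(u)\cap V(C)|\le 4$ together with $|N(u)\cap V(C)|\ge 3$ forces $|N(u)\cap\{a,b\}|\le 1$, not $a,b\in N(u)$; in particular the case $|N(u)\cap V(C)|=3$ with $a,b\notin N(u)$ survives. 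Dropping the whole chordlessness discussion leaves a correct (and much shorter) proof identical to the paper's.
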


\begin{proof}
  If $G$ is 4-connected, then any vertex $v$ has at least four
  neighbors.  Since $G - v$ is 3-connected, by
  Theorem~\ref{th:cycleThroughK}, it contains a cycle going through
  three neighbors of $v$.  Together with $v$, this cycle forms a wheel
  centered at $v$.
\end{proof}

If $A \subset V(G)$, we denote by $N(A)$ the set of vertices from
$V(G) \setminus A$ adjacent to at least one vertex of $A$.  When
$F\subseteq V(G)$, we denote by $\overline{F}$ the set $V(G) \setminus
(F \cup N(F))$.  We say that $F$ is a \emph{fragment} of $G$ if
$|N(F)|=\kappa(G)$ and $\overline{F} \ne \emptyset$ (note that if $F$
is a fragment of $G$, then so is $\overline{F}$).  An \textit{end} of
$G$ is a fragment not containing other fragments as proper subsets. It
is clear that any fragment $F$ contain an end, and that consequently
all graphs contain at least two disjoints ends: one in $F$, another
one in $\overline{F}$.

\begin{lemma}
  \label{l:2conBlock}
  Let $G$ be a wheel-free graph such that $\kappa(G) = 2$ and $F$ be
  an end of $G$ such that $|F|\geq 2$ and $N(F) = \{a, b\}$.  Let
  $G_F$ be the graph obtained from $G[F \cup \{a, b\}]$ by adding the
  edge $ab$ (if it is not there already).  Then $G_F$ is 3-connected
  and $W(G_F) \subseteq \{a, b\}$.
\end{lemma}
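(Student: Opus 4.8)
The plan is to verify the two assertions — that $G_F$ is $3$-connected, and that $W(G_F)\subseteq\{a,b\}$ — more or less separately, using the fact that $F$ is an \emph{end}, i.e.\ a minimal fragment, to rule out small cutsets.

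First I would establish $3$-connectedness. Since $G$ is $2$-connected, every vertex of $F$ has a neighbor in $F\cup\{a,b\}$, and there is no cutvertex of $G$ inside $F$; after adding the edge $ab$ the only candidates for a cutset of size $\le 2$ in $G_F$ are: a single vertex, a pair contained in $\{a,b\}$, or a pair meeting $F$. A cutvertex of $G_F$ would, because $ab$ is an edge, separate $F\cup\{a,b\}$ in a way that gives either a cutvertex of $G$ lying in $F$ (impossible since $\kappa(G)=2$) or isolates a single vertex, contradicting $|F|\ge 2$ together with $2$-connectivity of $G$. For a $2$-cutset $\{x,y\}$ of $G_F$ with, say, $x\in F$: deleting $\{x,y\}$ from $G_F$ disconnects $F\cup\{a,b\}$, and since $ab$ is present, $a$ and $b$ must lie in the same component or one of them is in $\{x,y\}$. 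In either case one gets back a $2$-cutset of $G$ contained in $F\cup N(F)$ that splits $F$ into two nonempty pieces, each of which (together with the appropriate part of $\{a,b\}$) gives a proper sub-fragment of $G$ contained in $F$ — contradicting that $F$ is an end. (This is the step that really uses ``end'' rather than just ``fragment''.) Hence $G_F$ is $3$-connected.

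For $W(G_F)\subseteq\{a,b\}$, suppose some vertex $v\in F$ is the center of a wheel $(C,v)$ in $G_F$. The rim $C$ is a chordless cycle of $G_F$ and $v$ has three neighbors on it. If the added edge $ab$ is not used by $C$ and $v$'s neighbors are genuine edges of $G$, then $(C,v)$ is a wheel in $G$ centered at $v\in F$, contradicting wheel-freeness of $G$. So I must handle the case where $C$ uses the edge $ab$. Then $C$ passes through both $a$ and $b$, and $C-ab$ is an induced path $P$ of $G$ from $a$ to $b$ with all internal vertices in $F$ (it cannot leave $F\cup\{a,b\}$, and it is chordless in $G$ since it was chordless in $G_F$). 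Now in $G$ itself, $a$ and $b$ have neighbors in $\overline{F}$ (as $\{a,b\}=N(F)$ and $\overline F\neq\emptyset$); using $2$-connectivity of $G$ there is a path $Q$ from $a$ to $b$ with interior in $\overline F\cup(\text{rest of }G)$, internally disjoint from $F$. I would choose $Q$ chordless and, by pushing a Menger/fan argument in $G$, arrange that the cycle $C' = P\cup Q$ is a hole of $G$ on which $v$ still has its three neighbors (the three neighbors of $v$ all lie on $P\subseteq C'$), yielding a wheel of $G$ centered at $v$ — again a contradiction. Thus no vertex of $F$ centers a wheel of $G_F$, which is the claim.

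The main obstacle is the last case of the $W(G_F)$ argument: when the rim of the wheel uses the new edge $ab$, one must rebuild, inside the genuine graph $G$, a chordless cycle through the three neighbors of $v$. Making the replacement path $Q$ interact cleanly with $P$ — so that $P\cup Q$ is actually induced and no chord destroys the wheel — is where care is needed; the tools for this are exactly the Menger-type lemmas of Section~\ref{sec:Menger} (in particular Lemma~\ref{l:fan} and Lemma~\ref{l:2cutsetCycle}) applied to $G$ with the cutset $\{a,b\}$. Everything else is a routine unwinding of the definitions of fragment and end.
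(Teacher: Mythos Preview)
Your argument for $3$-connectedness is essentially the paper's: rule out $\{a,b\}$ as a cutset, then show that any other $2$-cutset of $G_F$ would yield a $2$-cutset of $G$ whose fragment is a proper subset of $F$, contradicting minimality of the end. That part is fine.

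Your treatment of $W(G_F)\subseteq\{a,b\}$ also starts the same way as the paper's, but you have manufactured an obstacle that is not there. The ``main obstacle'' you flag --- making the replacement cycle $P\cup Q$ a \emph{hole} of $G$ --- is unnecessary, because in this paper wheels are excluded as \emph{subgraphs}, not induced subgraphs. A wheel is, as an abstract graph, a cycle plus a center with at least three spokes; to find one as a subgraph of $G$ you only need \emph{some} cycle $C$ of $G$ (induced or not) and a vertex with three neighbours on it. So once you replace the edge $ab$ in the rim by any path $Q$ from $a$ to $b$ with interior in $\overline{F}$, the cycle $P\cup Q$ is a genuine cycle of $G$, the center $v\in F$ has all its neighbours in $F\cup\{a,b\}\subseteq V(P)$, and you are done --- no Menger-type cleanup is required. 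This is exactly what the paper does in one line.

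In short: drop the chordlessness requirement on the rebuilt rim, and your proof coincides with the paper's. The lemmas you cite from Section~\ref{sec:Menger} are not needed here (and would not, in any case, give you an induced cycle through three prescribed vertices).
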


\begin{proof}
  Note that $|V(G_F)| \geq 4$.  Let us suppose by way of contradiction
  that $G_F$ admits a cutset of cardinality~2, say $\{u, v\}$.  The
  set $\{a, b\}$ is clearly not a cutset of $G_F$, so $|\{u,v\} \cap
  \{a,b\}| < 2$.  If $|\{u,v\} \cap \{a,b\}| =1$, then $\{u,v\}$ is
  also a cutset of $G$ which has a fragment strictly included in $F$,
  a contradiction.  In the same way, if $\{u,v\} \cap \{a, b\} =
  \emptyset$, then, since $ab \in E(G_F)$, $a$ and $b$ are in the same
  component of $G_F\setminus \{u,v\}$.  Hence any component of
  $G_F-\{u,v\}$ not containing $a$ and $b$, is also a component of $G
  -\{u,v\}$ and thus a fragment strictly included in $F$, a
  contradiction.  So, $G_F$ does not contain a cutset of cardinality~2
  and, as $|V(G_F)| \ge 4$, $G_F$ is 3-connected.
  
  Suppose that $G_F$ contains a wheel $(C, w)$.  Since $G$ is
  wheel-free, the edge $ab$ must be an edge of that wheel, and
  $ab\notin E(G)$.  If $ab$ is an edge of $C$, then a wheel of $G$ is
  obtained by replacing $ab$ with a path from $a$ to $b$ with internal
  vertices in $\overline{F}$, a contradiction.  Hence, $ab$ is an edge
  incident to the center of $(C, w)$, so $w\in \{a, b\}$.  This proves
  $W(G_F) \subseteq \{a, b\}$.
\end{proof}

\begin{lemma} 
  \label{essential} 
  If a 3-connected graph $G$  contains an edge $e =ab$ that is
  not essential, then $\{a, b\} \subseteq W(G)$.
\end{lemma}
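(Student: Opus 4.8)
By symmetry it is enough to exhibit a wheel of $G$ centered at $a$, that is, a cycle $C$ of $G$ with $a\notin V(C)$ and $|N_G(a)\cap V(C)|\geq 3$. Since $e=ab$ is not essential, $H:=G\sm e$ is $3$-connected, so $a$ has at least three neighbours in $H$, all distinct from $b$; choose three of them, and (Menger) choose three internally disjoint $a$--$b$ paths $P_1,P_2,P_3$ in $H$. Since $H$ has no edge $ab$, each $P_i$ has length at least $2$; relabel so that $x_i$ is the neighbour of $a$ on $P_i$, so that $x_1,x_2,x_3$ are pairwise distinct, lie in $N_G(a)$, and differ from $b$. Deleting $a$ turns $P_i$ into a path $Q_i$ from $x_i$ to $b$, and $Q_1,Q_2,Q_3$ are pairwise disjoint except at $b$. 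Finally, $G-a$ is $2$-connected because $G$ is $3$-connected.

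I would then apply Lemma~\ref{l:cycleOrTheta} to $G-a$ and the triple $x_1,x_2,x_3$. If a cycle $C$ of $G-a$ runs through $x_1,x_2,x_3$, then $C$ together with $a$ is the desired wheel. Otherwise we obtain two vertices $t_A,t_B\notin\{x_1,x_2,x_3\}$ and six internally disjoint paths whose union $\Theta\subseteq G-a$ is a subdivision of $K_{2,3}$ with the $x_i$'s on one side and $t_A,t_B$ on the other. The graph $\Theta$ has exactly three cycles $C_{12},C_{13},C_{23}$, where $C_{ij}$ is the one through $x_i$ and $x_j$, and every vertex of $\Theta$ lies on at least one of them. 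The point is now to use the fourth neighbour $b$ of $a$.

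If $b\in V(\Theta)$, then $b$ lies on some $C_{ij}$; this cycle contains the three neighbours $x_i,x_j,b$ of $a$ and avoids $a$, hence is the rim of a wheel centered at $a$. So assume $b\notin V(\Theta)$. Truncating each $Q_i$ at its first vertex $w_i$ on $\Theta$ (read from $b$) gives three paths from $b$ to $\Theta$ that pairwise meet only at $b$, have interiors disjoint from $V(\Theta)$, and end at three \emph{distinct} vertices $w_1,w_2,w_3$ of $\Theta$. Note that if, for some $i\neq j$, $\Theta$ contains a $w_i$--$w_j$ path $\mu$ that passes through at least two of $x_1,x_2,x_3$, then the truncated $Q_i$, the truncated $Q_j$ and $\mu$ form a cycle of $G-a$ through $b$ and those two $x_\ell$'s, i.e.\ a wheel centered at $a$. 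So it remains to prove that such a ``good'' pair always exists.

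That last step is where the work is, and I expect it to be the main obstacle. The plan is to classify the location of each $w_i$ in the subdivision $\Theta$ (it is $t_A$, $t_B$, a subdivision vertex of one of the six branches, or one of $x_1,x_2,x_3$), to determine exactly which pairs $\{w_i,w_j\}$ fail to admit a path through two of the $x_\ell$'s — intuitively a pair fails only when both its members sit ``behind'' a single $x_\ell$ (on the two branches at $x_\ell$, on opposite sides) or ``behind'' the pair $\{t_A,t_B\}$ — and then to check that three feet $w_1,w_2,w_3$ can never be pairwise bad: if some $w_i$ coincides with some $x_\ell$ every pair through it is good, and otherwise a short argument on which side ($t_A$-side or $t_B$-side) each $w_i$ attaches rules out all three pairs being bad at once. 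Everything else reduces to Menger's theorem and Lemma~\ref{l:cycleOrTheta}.
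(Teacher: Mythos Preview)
Your approach is essentially the same as the paper's: three internally disjoint $a$--$b$ paths in $G\sm e$ give three neighbours of $a$, Lemma~\ref{l:cycleOrTheta} in $G-a$ either gives the wheel directly or produces the theta $\Theta$, and then the fourth neighbour $b$ together with (truncations of) the three paths finishes the job. The only real difference is in the last step, and here you are working much harder than necessary.

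What you flag as ``the main obstacle'' --- classifying bad pairs $(w_i,w_j)$ and showing three feet cannot be pairwise bad --- is exactly the one-line pigeonhole the paper uses. Write $\Theta$ as the union of the two tripods $T_A=\bigcup_\ell (x_\ell\text{--}t_A\text{ branch})$ and $T_B=\bigcup_\ell (x_\ell\text{--}t_B\text{ branch})$. Each $w_i$ lies in $T_A$ or in $T_B$, so two of them, say $w_i,w_j$, lie in the same tripod, say $T_A$. Then $T_A$ contains a $w_i$--$w_j$ path (it is a tree) whose complement in the cycle $C_{\ell m}$ it sits on passes through $x_\ell$ and $x_m$ via $t_B$; this is your $\mu$. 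The paper phrases this as ``a 2-fan from $b$ to $P_1\cup P_2$'' and concludes immediately. Your sketch (``a short argument on which side each $w_i$ attaches'') is precisely this pigeonhole, and once you see it that way, the preliminary classification of bad pairs and the special case $w_i=x_\ell$ become unnecessary.

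So: correct, same route, but the final step collapses to two sentences rather than a case analysis.
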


\begin{proof}
  Since $G \setminus ab$ is $3$-connected, there exist three
  vertex-disjoint paths $T_1=aa_1 \dots b$, $T_2 = aa_2 \dots b$ and
  $T_3=aa_3 \dots b$ in $G \setminus ab$.

  In $G-a$, which is $2$-connected, we may assume that no cycle goes
  through $a_1$, $a_2$ and $a_3$ (otherwise $a\in W(G)$).  So, by
  Lemma~\ref{l:cycleOrTheta} applied to $G-a$, there exist two
  vertices $u, v$ and six internally vertex-disjoint paths $P_1 = a_1
  \dots u$, $P_2 = a_2 \dots u$, $P_3 = a_3 \dots u$, $Q_1 = a_1 \dots
  v$, $Q_2 = a_2 \dots v$ and $Q_3 = a_3 \dots v$.  We set $X = P_1
  \cup P_2 \cup P_3 \cup Q_1 \cup Q_2 \cup Q_3$.

  Because of $T_1, T_2, T_3$, either $b\in X$, in which case we
  suppose $b\in P_1$, or there exists a 3-fan from $b$ to $X$ in
  $G-a$.  When $b\notin X$, from the pigeon-hole principle, at least
  two paths from this 3-fan end in $P_1\cup P_2 \cup P_3$ or in
  $Q_1\cup Q_2 \cup Q_3$.  So, up to symmetry, if $b\notin X$, then we
  may assume that there exists a 2-fan from $b$ to $P_1\cup P_2$.  It
  follows that (wherever $b$) there is a cycle in $G - a$ that goes
  through $a_1$, $a_2$ and $b$.  Together with $a$, this cycle forms a
  wheel centered at $a$.  This proves $a\in W(G)$, and $b\in W(G)$ can
  be proved similarly.
\end{proof}

A graph is \emph{almost wheel-free} if $W(G)$ is either empty, or made
of a single vertex of degree~3, or made of two adjacent vertices, both
of degree~3 (this notion will be used more in the next sections).  By definition, every
wheel-free graph is almost wheel-free.

\begin{corollary}
  \label{col:3con}
  If $G$ is a 3-connected almost wheel-free graph, then $G$ is
  minimally 3-connected.
\end{corollary}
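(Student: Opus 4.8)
The plan is to deduce the corollary directly from Lemma~\ref{essential}, combined with the elementary fact that any $3$-connected graph has minimum degree at least~$3$. I would argue by contradiction: assume $G$ is $3$-connected and almost wheel-free but \emph{not} minimally $3$-connected, so that $G$ contains an edge $e = ab$ that is not essential.

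First I would invoke Lemma~\ref{essential}, which gives $\{a, b\} \subseteq W(G)$. Since $a \neq b$, this forces $|W(G)| \geq 2$. By the definition of \emph{almost wheel-free}, the only remaining possibility is then the last case: $W(G)$ consists of exactly two adjacent vertices, both of degree~$3$. As $a$ and $b$ are two distinct members of $W(G)$ and $|W(G)| = 2$, these two vertices must be $a$ and $b$ themselves; in particular $ab \in E(G)$ and $\deg_G(a) = 3$.

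Next I would use that $e = ab$ is not essential. Since deleting an edge cannot increase connectivity, $\kappa(G \sm ab) = \kappa(G) = 3$, so $G \sm ab$ is $3$-connected and hence has minimum degree at least~$3$. But $\deg_{G \sm ab}(a) = \deg_G(a) - 1 = 2$, a contradiction. Therefore no edge of $G$ can fail to be essential, i.e.\ $G$ is minimally $3$-connected.

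I do not expect a genuine obstacle here: all the substantive work is already carried out in Lemma~\ref{essential}, and what is left is a one-line degree count. The only subtlety worth spelling out is that $W(G)$ cannot be a single vertex in this situation, but this is immediate because $a$ and $b$ are distinct and both lie in $W(G)$.
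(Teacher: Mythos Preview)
Your argument is correct and essentially identical to the paper's: both invoke Lemma~\ref{essential} on a hypothetical non-essential edge $ab$ to place $a,b$ in $W(G)$, and then reach a contradiction with the definition of almost wheel-free via the degree constraint (the paper first observes $\deg(a),\deg(b)\geq 4$ and contradicts the definition, while you use the definition to force $\deg(a)=3$ and then contradict the 3-connectivity of $G\setminus ab$---this is the same contradiction read in opposite directions). The one small omission is that you assert $\kappa(G)=3$ without justification; the paper cites Lemma~\ref{l:no4} for this, and it is needed so that ``not minimally 3-connected'' really does reduce to the existence of a non-essential edge under the paper's definition.
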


\begin{proof}
  Since $G$ is 3-connected, by Lemma~\ref{l:no4}, $G$ has
  connectivity~3.  Let $e=uv$ be an edge of $G$.  Suppose for a
  contradiction that $e$ is not essential. Then $\deg(u), \deg(v) \geq
  4$, and, by Lemma~\ref{essential}, $u, v \in W(G)$.  This
  contradicts the fact that $G$ is almost wheel-free.  Hence, all
  edges of $G$ are essential and so $G$ is minimally 3-connected.
\end{proof}

It is tempting to use Corollary~\ref{col:3con} to give a direct proof
of the next theorem.  Indeed, consider the following class $C$ of
graphs: graphs such that any subgraph has connectivity at most 2 or is
minimally 3-connected.  By Lemma~\ref{l:no4} and
Corollary~\ref{col:3con}, any wheel-free graph is in $C$.  Since $C$
is made of minimally 3-connected graphs, which have vertices of
degree~3 by Theorem~\ref{mader}, and of graphs that are even less
connected, it could be that any graph in $C$ has a vertex of degree at
most~3.  But unfortunately, there exist graphs in $C$ of minimum
degree~4 (they contain wheels), see Figure~\ref{fig:mindeg4}.  Note
also that the next theorem is best possible in some sense, since many
wheel-free graphs have no vertex of degree less than 3, as shown by
the graphs represented on Figures~\ref{fig:some3c}
and~\ref{fig:cycle}.
 \begin{figure}[hbtp]
\begin{center}
\includegraphics{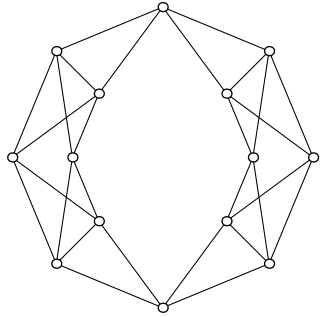}
\end{center}
\caption{A graph with minimum degree 4 and no 3-connected subgraph.\label{fig:mindeg4}}
\end{figure}

\begin{theorem}
  \label{th:deg3}
  If $G$ is a wheel-free graph on at least two vertices, then
  $G$ has at least two vertices of degree at most~3.
\end{theorem}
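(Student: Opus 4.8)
The plan is to induct on $|V(G)|$ and rely on the connectivity analysis already set up. For the base case, any wheel-free graph on $2,3$ vertices trivially has at least two vertices of degree at most $3$. For the induction step, I would split according to $\kappa(G)$.

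\textbf{Low connectivity cases.} If $G$ is disconnected, each component is wheel-free; a component on at least two vertices gives two vertices of degree at most $3$ by induction, and a component on one vertex gives an isolated vertex. Either way we finish unless all components are single vertices, in which case we are trivially done. If $\kappa(G)=1$, take a cutvertex and a leaf block (an endblock) $B$; if $|V(B)|\ge 3$, then $B$ minus the cutvertex has, by the induction hypothesis applied to $B$, two vertices of degree at most $3$, at least one of which is not the cutvertex and hence keeps its degree in $G$; iterating over two distinct endblocks (or handling the $K_2$ endblock case, where the leaf has degree $1$) yields two such vertices in $G$. If $\kappa(G)=2$, I would use Lemma~\ref{l:2conBlock}: take an end $F$ with $N(F)=\{a,b\}$; if $|F|\ge 2$, the graph $G_F$ is $3$-connected and $W(G_F)\subseteq\{a,b\}$, so $G_F$ is almost wheel-free, hence minimally $3$-connected by Corollary~\ref{col:3con}; then Theorem~\ref{mader} gives many vertices of degree $3$ in $G_F$, and since at most two of them ($a$ and $b$) can have larger degree in $G$ (all other vertices of $F$ have the same neighborhood in $G$ and $G_F$, except that the edge $ab$ added in $G_F$ only affects $a,b$), we get at least one vertex of degree $3$ in $G$ inside $F$; doing this for two disjoint ends $F$ and $\overline F$ gives two such vertices. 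If instead $|F|=1$, then $F$ is a single vertex of degree $2$ in $G$, and we still have the disjoint end $\overline F$ to produce a second low-degree vertex.

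\textbf{The 3-connected case.} If $\kappa(G)=3$, then $G$ is wheel-free, hence almost wheel-free, hence minimally $3$-connected by Corollary~\ref{col:3con}. By Theorem~\ref{mader}, $G$ has at least $\frac{2|V(G)|+2}{5}\ge 2$ vertices of degree $3$ (using $|V(G)|\ge 4$), which is exactly what we need. The case $\kappa(G)\ge 4$ is ruled out by Lemma~\ref{l:no4}, since $G$ is wheel-free.

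\textbf{Main obstacle.} The routine part is the $\kappa(G)\le 2$ bookkeeping: one must be careful that the two vertices of degree at most $3$ produced are genuinely distinct and genuinely have degree at most $3$ in $G$ itself, not merely in the smaller pieces. The cleanest device is to always work with two disjoint ends $F$ and $\overline F$ (which exist whenever $\kappa(G)\le 2$ and $G$ is connected), extract one low-degree vertex from each, and check that the boundary vertices $N(F)$ are the only ones whose degree can increase when passing to the auxiliary graph $G_F$. I expect the fiddly point to be the degenerate subcases ($|F|=1$, or an endblock equal to $K_2$), which need to be enumerated explicitly but present no real difficulty.
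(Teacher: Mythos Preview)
Your overall strategy matches the paper's: induct on $|V(G)|$, split by connectivity, and in the $\kappa(G)=2$ case extract a low-degree vertex from each of two disjoint ends via the auxiliary graph $G_F$ of Lemma~\ref{l:2conBlock}. The $\kappa(G)\in\{0,1,3\}$ cases and the exclusion of $\kappa(G)\ge 4$ are handled correctly.

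There is, however, a real gap in the $\kappa(G)=2$ case. You assert that $G_F$ is almost wheel-free because $W(G_F)\subseteq\{a,b\}$, and then invoke Corollary~\ref{col:3con} to get that $G_F$ is minimally $3$-connected. But ``almost wheel-free'' requires the vertices in $W(G_F)$ to have degree~$3$, and nothing prevents $a$ or $b$ from having many neighbours in $F$; so $G_F$ need not be almost wheel-free, and Corollary~\ref{col:3con} does not apply as stated. What \emph{does} follow from $W(G_F)\subseteq\{a,b\}$, via Lemma~\ref{essential}, is that every edge of $G_F$ other than $ab$ is essential (a non-essential edge would force both its ends into $W(G_F)\subseteq\{a,b\}$, hence the edge would be $ab$). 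From there you must split: if $ab$ is essential too, then $G_F$ is minimally $3$-connected and Theorem~\ref{mader} finishes as you intended; if $ab$ is not essential, then $G_F\sm ab$ is $3$-connected, and since $G_F\sm ab\subseteq G$ it is genuinely wheel-free, so Corollary~\ref{col:3con} applies to $G_F\sm ab$ and Theorem~\ref{mader} again yields at least three degree-$3$ vertices (here you need $|F|\ge 3$ to get $(2|V(G_F)|+2)/5>2$; the case $|F|\le 2$ should be disposed of directly, since any vertex of $F$ then has all its neighbours in $F\cup\{a,b\}$ and hence degree at most~$3$). This extra case-split is exactly what the paper does; once you insert it, your argument is complete. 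A minor point: $\overline{F}$ is the complementary fragment, not necessarily an end; you should pick an end inside it, as the paper does.
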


\begin{proof}
  Our proof is by induction on $|V(G)|$, the result holding trivially when
  $|V(G)| \leq 4$.  
  
  If $G$ is not connected, then by the induction hypothesis,
  each of its components has at least one vertex of degree at most~3,
  so $G$ contains at least two vertices of degree at most~3.
  
    If $G$ has a cutvertex $a$, then let $C_1$ and $C_2$ be components of
  $G - a$.  By the induction hypothesis, $G[C_1\cup \{a\}]$ and
  $G[C_2\cup \{a\}]$ have each two vertices of degree at most~3. Thus at least
  one of them is distinct from $a$ and thus is also a vertex of degree
  at most~3 in $G$.  Hence, $C_1$ and $C_2$ have each at least one
  vertex of degree at most~3 in $G$.

  If $G$ is 3-connected, then, by Corollary~\ref{col:3con}, it is
  minimally 3-connected, so by Theorem~\ref{mader}, it has at least
  two vertices of degree at most~3.

  Assume finally that $G$ has connectivity~2.  Let $F$ and $F'$ be two
  disjoints ends of $G$.  It is enough to prove that each of $F, F'$
  contains at least one vertex of $G$ of degree~3.  Let us prove it
  for $F$, the proof being similar for~$F'$.
  
    If $|F| \leq 2$, this
  is easy to check.  So, suppose $|F|\geq 3$.
Let $\{u,v\} = N(F)$ and $G_F$ be the graph as in
  Lemma~\ref{l:2conBlock}.  Hence $G_F$ is 3-connected. Moreover every edge
  $e\neq uv$ of $H$ is essential. Indeed if an edge different from $uv$
  were not essential,  then by Lemma~\ref{essential} some vertex $a
  \notin \{u, v\}$ would be the center of some wheel of $G_F$, a
  contradiction to Lemma~\ref{l:2conBlock}.

  Note that $G_F\sm uv$ is a subgraph of $G$ and so is wheel-free. Assume first that $G_F \sm uv$ is
  3-connected. Then
   by Corollary~\ref{col:3con}, it is minimally 3-connected.  So, by
  Theorem~\ref{mader}, $G_F \sm uv$ contains at least three vertices
  of degree at most~3. One of those is distinct from $u$ and $v$ and thus has degree at most~3
  in~$G$.  Assume finally that $G_F\sm uv$ is not 3-connected.  This
  means that $uv$ is essential in $G_F$, so, all edges of $G_F$ are
  essential, so $G_F$ is minimally 3-connected (note that $G_F$ may
  contain wheels).  We conclude as above by using Theorem~\ref{mader}
  in $G_F$.
\end{proof}

With slight modifications in the proof, we shall now prove that any
wheel-free planar graph on at least two vertices contains at least two
vertices of degree at most~2.  In fact, the key property that we use
is that a planar graph does not contain a subdivision of $K_{3, 3}$.

\begin{lemma}
  \label{l:hubK33}
  If $G$ is a 3-connected graph that contains no subdivision of $K_{3,
    3}$, then $W(G)=V(G)$.
\end{lemma}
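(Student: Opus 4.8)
The plan is to follow the proof of Lemma~\ref{l:no4}, but to compensate for the fact that $G-v$ is now only $2$-connected (not $3$-connected) by producing a forbidden $K_{3,3}$-subdivision whenever a vertex fails to be a wheel-center. The starting point is the trivial but essential remark that for a vertex $v$ with three distinct neighbours $a,b,c$, if some cycle $C$ of $G-v$ passes through $a$, $b$ and $c$, then the subgraph consisting of $C$, the vertex $v$, and the three edges $va,vb,vc$ is a wheel centered at $v$: within this subgraph $C$ has no chord, so $(C,v)$ is a genuine wheel, and it is a subgraph of $G$, whence $v\in W(G)$. So it suffices to show that for every vertex $v$ of $G$ there is a cycle of $G-v$ through three neighbours of $v$.

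So fix $v\in V(G)$; since $\kappa(G)\ge 3$, the vertex $v$ has three distinct neighbours $a,b,c$, and $G-v$ is $2$-connected. I would apply Lemma~\ref{l:cycleOrTheta} to $G-v$ with the triple $\{a,b,c\}$. If its first outcome holds we have the desired cycle and are done. Otherwise we obtain two distinct vertices $t_A,t_B\notin\{a,b,c\}$ and six internally vertex-disjoint paths, one from each of $t_A,t_B$ to each of $a,b,c$; this is exactly a subdivision of $K_{2,3}$ in $G-v$ with branch vertices $\{t_A,t_B\}$ and $\{a,b,c\}$. The key step is now to glue $v$ onto this structure: $v$ is distinct from $t_A,t_B,a,b,c$ and, being absent from $G-v$, lies on none of the six paths, so the edges $va$, $vb$, $vc$ extend the $K_{2,3}$-subdivision to a subdivision of $K_{3,3}$ with branch classes $\{v,t_A,t_B\}$ and $\{a,b,c\}$, contained in $G$. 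This contradicts the hypothesis, so the first outcome of Lemma~\ref{l:cycleOrTheta} must hold, and $v\in W(G)$. As $v$ was arbitrary, $W(G)=V(G)$.

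There is essentially no obstacle here; the only thing to check is that the object assembled in the last step is an honest $K_{3,3}$-subdivision, i.e. that its six branch vertices are pairwise distinct and its nine paths are pairwise internally disjoint and internally avoid all branch vertices. Distinctness is immediate ($t_A\ne t_B$ and $t_A,t_B\notin\{a,b,c\}$ are guaranteed by Lemma~\ref{l:cycleOrTheta}, while $v\notin\{t_A,t_B,a,b,c\}$ since $t_A,t_B$ are vertices of $G-v$ and $a,b,c$ are neighbours of $v$); internal disjointness of the six ``old'' paths is part of the statement of Lemma~\ref{l:cycleOrTheta}, the three ``new'' paths $va,vb,vc$ have no internal vertices, and $v$ lies on none of the old paths. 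Combined with the fact that a planar graph contains no subdivision of $K_{3,3}$, this lemma will moreover give that every $3$-connected planar graph contains a wheel, the planar analogue of Lemma~\ref{l:no4} needed later.
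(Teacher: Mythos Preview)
Your proof is correct and follows essentially the same approach as the paper: apply Lemma~\ref{l:cycleOrTheta} in $G-v$ to three neighbours of $v$, and observe that the second outcome together with $v$ and its three incident edges yields a subdivision of $K_{3,3}$. Your write-up is more explicit in verifying that the assembled structure is a genuine $K_{3,3}$-subdivision and that the resulting cycle-plus-$v$ is a wheel as a subgraph, but the argument is the same.
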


\begin{proof}
  Let $v$ be a vertex of $G$.  It has at least three neighbors $x, y,
  z$.  If no cycle goes through them, then let $P_A, Q_A, R_A, P_B,
  Q_B, R_B$ be the six paths of $G - v$ (which is 2-connected) whose
  existence is proved in Lemma~\ref{l:cycleOrTheta}.  Together with
  $v$, they form a subdivision of $K_{3, 3}$, a contradiction.  Hence
  a cycle $C$ goes through $x,y,z$, so $(C,v)$ is a wheel centered at
  $v$.
\end{proof}

\begin{theorem}
  If $G$ is a wheel-free graph on at least two vertices that contains
  no subdivision of $K_{3, 3}$, then $G$ has at least two vertices of
  degree at most~2.
\end{theorem}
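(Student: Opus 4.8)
The plan is to mimic the proof of Theorem~\ref{th:deg3} almost verbatim, replacing the two applications of Mader's theorem about minimally 3-connected graphs (Theorems~\ref{maderCycle} and~\ref{mader}) by the fact that a planar graph, being free of subdivisions of $K_{3,3}$, satisfies $W(G) = V(G)$ whenever it is 3-connected (Lemma~\ref{l:hubK33}). The point is that a $3$-connected wheel-free graph that also excludes subdivisions of $K_{3,3}$ simply \emph{cannot exist} on four or more vertices: Lemma~\ref{l:hubK33} would force $W(G) = V(G) \neq \emptyset$, contradicting wheel-freeness. So the ``$3$-connected'' case of the induction becomes vacuous rather than something to be analyzed, which is what makes the degree bound drop from~3 to~2.

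First I would set up the induction on $|V(G)|$, checking the base case $|V(G)| \leq 3$ directly (a wheel-free graph on at most three vertices has minimum degree at most~2, and on two or three vertices has at least two such vertices). Then I handle the disconnected case and the cutvertex case exactly as in Theorem~\ref{th:deg3}: if $G$ is disconnected, each component is wheel-free and $K_{3,3}$-subdivision-free, so by induction each contributes a vertex of degree $\leq 2$; if $a$ is a cutvertex with components $C_1, C_2$ of $G-a$, then $G[C_i \cup \{a\}]$ is again in our class and has two vertices of degree $\leq 2$ by induction, at least one of which is $\neq a$ and hence retains degree $\leq 2$ in $G$, giving one vertex per side. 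Next, $G$ cannot be $4$-connected (Lemma~\ref{l:no4}) nor even $3$-connected (Lemma~\ref{l:hubK33}), since it is wheel-free. So the only remaining case is $\kappa(G) = 2$.

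For the $2$-connected case I take two disjoint ends $F, F'$ of $G$ and argue that each contains a vertex of $G$ of degree $\leq 2$; by symmetry I do it for $F$. If $|F| \leq 2$ this is a direct check (a vertex of $F$ has at most one neighbor inside $F$ and at most two in $N(F)$, but one needs $|F| \geq 1$, so in fact some vertex of $F$ has degree $\leq 2$; more carefully, with $|F| = 1$ the single vertex has degree exactly~2, and with $|F| = 2$ at least one of the two vertices of $F$ sees at most one of $\{u,v\}$, giving degree $\leq 2$ --- I would double-check the $|F|=2$ subcase, since it is the fiddliest part here). If $|F| \geq 3$, let $N(F) = \{u,v\}$ and let $G_F$ be the graph of Lemma~\ref{l:2conBlock}, which is $3$-connected with $W(G_F) \subseteq \{u,v\}$. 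Now $G_F \sm uv$ is a subgraph of $G$, hence wheel-free and $K_{3,3}$-subdivision-free; by Lemma~\ref{l:hubK33} it therefore cannot be $3$-connected, so $uv$ is essential in $G_F$. But also every edge $e \neq uv$ of $G_F$ is essential: otherwise Lemma~\ref{essential} would put a vertex $a \notin \{u,v\}$ into $W(G_F)$, contradicting Lemma~\ref{l:2conBlock}. Hence all edges of $G_F$ are essential, so $G_F$ is minimally $3$-connected. Since $G_F$ has no subdivision of $K_{3,3}$ (it is planar if $G$ is, or more precisely $G_F \sm uv \subseteq G$ and adding one edge keeps it $K_{3,3}$-subdivision-free --- this I would verify, as $G_F$ itself is not literally a subgraph of $G$), a minimally $3$-connected such graph on $\geq 4$ vertices must have a cycle through three neighbors of any vertex by Lemma~\ref{l:hubK33}, again contradicting wheel-freeness of $G_F$'s subgraph structure --- wait, $G_F$ may contain wheels through $uv$. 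The cleaner finish: apply Theorem~\ref{mader} to the minimally $3$-connected graph $G_F$ to get at least $\frac{2|V(G_F)|+2}{5} \geq 2$ vertices of degree $3$ in $G_F$, and since $G_F$ has no $K_{3,3}$-subdivision one checks these degree-$3$ vertices force something stronger; but actually the genuinely parallel statement is that we want degree $\leq 2$, which $G_F$ (being $3$-connected) does not have.

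The main obstacle, then, is exactly the endgame of the $\kappa(G)=2$ case: the auxiliary graph $G_F$ is $3$-connected, so it never has a vertex of degree $\leq 2$, and Lemma~\ref{l:hubK33} cannot be run against it because $G_F$ legitimately contains wheels centered at $u$ or $v$. The resolution I would pursue is to \emph{not} pass to $G_F$ at all in this proof, but instead to argue directly: in a $2$-connected wheel-free $K_{3,3}$-subdivision-free graph $G$, take an end $F$ with $N(F) = \{u,v\}$; the subgraph $G[F \cup \{u,v\}]$ together with a $u$--$v$ path through $\overline{F}$ shows that $G[F \cup \{u,v\}]$ plus the edge $uv$ --- i.e.\ $G_F$ --- would, if $3$-connected with a wheel centered outside $\{u,v\}$, produce a wheel in $G$, so $W(G_F) \subseteq \{u,v\}$ as before; but now observe that since $G_F$ has no $K_{3,3}$-subdivision, Lemma~\ref{l:hubK33} gives $W(G_F) = V(G_F)$, whence $V(G_F) \subseteq \{u,v\}$, i.e.\ $|V(G_F)| \leq 2$, contradicting $|V(G_F)| = |F| + 2 \geq 5$. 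This is the crux: $G_F$ \emph{cannot be $3$-connected}, contradicting Lemma~\ref{l:2conBlock}, so in fact $|F| \leq 2$ always, and then the easy check finishes both ends. The one thing to verify carefully is that $G_F$ inherits ``no subdivision of $K_{3,3}$'' from $G$ despite the added edge $uv$: any such subdivision in $G_F$ using $uv$ can have $uv$ replaced by a $u$--$v$ path internally in $\overline{F}$ (which exists and is vertex-disjoint from $F$ since $G$ is $2$-connected with $N(F) = \{u,v\}$), producing a subdivision of $K_{3,3}$ in $G$ --- a contradiction. With that lemma in hand the whole theorem collapses to: disconnected/cutvertex cases by induction, the $\geq 3$-connected cases impossible by Lemmas~\ref{l:no4} and~\ref{l:hubK33}, and the $\kappa = 2$ case forcing every end to have size $\leq 2$ and hence a vertex of degree $\leq 2$.
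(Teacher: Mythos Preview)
Your final resolution is correct and is exactly the paper's argument: for an end $F$ with $|F|\ge 2$, the graph $G_F$ of Lemma~\ref{l:2conBlock} is $3$-connected with $W(G_F)\subseteq\{u,v\}$, and $G_F$ inherits ``no $K_{3,3}$-subdivision'' from $G$ (any such subdivision would use $uv$, which can be rerouted through $\overline{F}$); then Lemma~\ref{l:hubK33} forces $W(G_F)=V(G_F)$, contradicting $|V(G_F)|\ge 4$. The meandering before that paragraph (trying Mader, then degree-$3$ vertices in $G_F$) is a dead end you correctly abandon.

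One cleanup: your split into $|F|\le 2$ versus $|F|\ge 3$ is inherited from the proof of Theorem~\ref{th:deg3} but is unnecessary here. Lemma~\ref{l:2conBlock} already applies whenever $|F|\ge 2$, and the contradiction via Lemma~\ref{l:hubK33} goes through for $|V(G_F)|\ge 4$, i.e.\ for $|F|\ge 2$. So you get $|F|=1$ outright, and the ``fiddly'' $|F|=2$ check you flagged is never needed. (For the record, your stated reason in that subcase---that one vertex of $F$ must miss one of $u,v$---is not literally correct: both vertices of $F$ could see both of $u,v$, but then a $u$--$v$ path through $\overline{F}$ together with one vertex of $F$ forms a rim on which the other vertex has three neighbours, giving a wheel. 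Running the main argument from $|F|\ge 2$ sidesteps this entirely.)
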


\begin{proof}
  The proof is very similar to the proof of Theorem~\ref{th:deg3}.  We
  start with a graph $G$ on at least two vertices.  As in the proof of
  Theorem~\ref{th:deg3}, we may assume that $G$ is 2-connected.  So,
  by Lemma~\ref{l:hubK33}, $G$ has connectivity~2.  We consider two
  disjoint ends $F$ and $F'$ of $G$.  It is enough to prove that both
  of them have cardinality 1.  So, suppose for a contradiction that
  $F$ has cardinality at least~2.  Let $\{u, v\} = N(F)$, and $G_F$ be
  the graph as in Lemma~\ref{l:2conBlock}.  So $G_F$ is 3-connected.
  In addition, it contains no subdivision of $K_{3, 3}$. Indeed if a subgraph $H$
  of $G_F$ is a subdivision of $K_{3, 3}$, then $H$ contains the edge
  $uv$.  So replacing $uv$ by some path from $u$ to $v$ with
  internal vertices in $\overline{F}$ yields a subdivision of $K_{3, 3}$
  in~$G$, a contradiction.  Hence, by Lemma~\ref{l:hubK33}, any vertex
  of $G_F$ is the center of a wheel.  In particular, $G_F$ contains a
  wheel whose center is not among $u, v$, a contradiction to
  Lemma~\ref{l:2conBlock}.
\end{proof}

\section{Three vertices in a cycle}
\label{s:3ic}

\begin{figure}
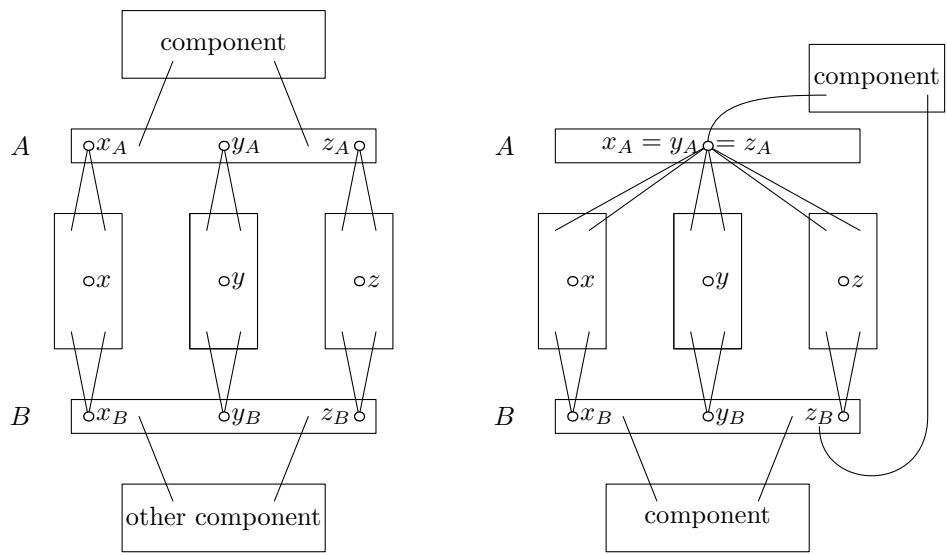

\begin{center}
 \includegraphics{fig3vertexCycle.1}\hspace{3em}
  \includegraphics{fig3vertexCycle.2}
\end{center}
\caption{Two graphs with a splitter with respect to $x$, $y$ and
  $z$.\label{fig:splitter}}
\end{figure}

The problem of deciding whether a cycle exists through three given
vertices of a graph is solved from an algorithmic point of view.
There is a linear time algorithm by LaPaugh and
Rivest~\cite{lapaughR:80}.  A simpler algorithm is given by Fleischner
and Woeginger~\cite{fleischnerW:92}.  They also give a certificate
when the answer is no, but it relies on the decomposition tree of a
graph into its triconnected components,
see~\cite{hopcroft.tarjan:3con}.  What we need here is a certificate
given in terms of cutsets.  The aim of this section is to state such a
certificate, whose existence is proved by Watkins and
Mesner~\cite{watkinsMesner:cycle} (see also \cite{gould:cycle} for a
survey about problems of cycles through prescribed elements of a
graph).  We state this result in a different way (for a more
convenient use in the next section), but the equivalence between the
two versions is immediate.  Also, we give a new proof, which is
slightly shorter and, we believe, simpler.

Let $G$ be a graph and $x, y, z$ three distinct vertices.  A pair $(A,
B)$ of two disjoint non-empty sets of vertices is a \emph{splitter
  with respect to x, y, z} if (see Figure~\ref{fig:splitter}):

\begin{enumerate}[(i)]
\item\label{i:xyz} $x, y, z$ are respectively in three distinct
  components $X, Y, Z$ of $G - (A \cup B)$.

\item\label{i:xayaza} All edges between $X$ and $A$ (resp.\ $Y$ and
  $A$, $Z$ and $A$) are incident to a unique vertex $x_A$ (resp.\
  $y_A$, $z_A$) of $A$.  

\item\label{i:xbybzb} All edges between $X$ and $B$ (resp.\ $Y$ and
  $B$, $Z$ and $B$) are incident to a unique vertex $x_B$ (resp.\
  $y_B$, $z_B$) of $B$.  

\item\label{i:AB} $A = \{x_A, y_A, z_A\}$, $B = \{x_B, y_B, z_B\}$.

\item\label{i:1or3} Either $|A| = 1$ or $|A| = 3$. Either $|B| = 1$ or
  $|B| = 3$.

\item\label{i:2con} $G - X$, $G - Y$ and $G - Z$ are 2-connected.

\item\label{i:component} If $|A| = 3$ and $|B| = 3$, then every edge
  between $A$ and $B$ is one of $x_Ax_B$, $y_Ay_B$ or $z_Az_B$, and
  every component $D$ of $G - (A \cup B)$ is such that $N(D)$ is
  included in either $A$, $B$, $\{x_A, x_B\}$, $\{y_A, y_B\}$ or
  $\{z_A, z_B\}$.
\end{enumerate}

\begin{theorem}[Watkins and Mesner~\cite{watkinsMesner:cycle}]
  \label{th:3inaCycle}
  Let $G$ be a 2-connected graph and $x, y, z$ three vertices of $G$.
  No cycle goes through $x, y, z$ if and only if $G$ admits a splitter
  with respect to $x, y, z$.
\end{theorem}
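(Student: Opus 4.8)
The plan is to prove both directions of the equivalence, with the ``if'' direction being routine and the ``only if'' direction being the substantial one. For the easy direction, suppose $G$ admits a splitter $(A,B)$ with respect to $x,y,z$ and suppose for contradiction that a cycle $\Gamma$ goes through $x,y,z$. Since $x,y,z$ lie in distinct components $X,Y,Z$ of $G-(A\cup B)$, the cycle $\Gamma$ must cross $A\cup B$ repeatedly. By conditions~(\ref{i:xayaza})--(\ref{i:AB}), the only vertices of $A\cup B$ through which $\Gamma$ can pass between $X$, $Y$, $Z$ (and the other components) are among $\{x_A,y_A,z_A,x_B,y_B,z_B\}$, and each of $X$, $Y$, $Z$ is reached from $A$ only through one vertex of $A$ and from $B$ only through one vertex of $B$. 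A counting/parity argument then shows the cycle cannot visit all three of $X,Y,Z$: a cycle meeting three ``blobs'', each accessible via at most two gateway vertices, must reuse a gateway vertex, which is impossible. Condition~(\ref{i:component}) is what prevents escaping through other components of $G-(A\cup B)$ when $|A|=|B|=3$. I would phrase this cleanly by contracting each of $X,Y,Z$ (and each other component $D$, using its small neighborhood) to a single vertex and observing the resulting small graph has no cycle through the three contracted vertices.

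For the ``only if'' direction, assume no cycle of $G$ goes through $x,y,z$; I must produce a splitter. Apply Lemma~\ref{l:cycleOrTheta} to $G$ with the three vertices $x,y,z$: since no cycle goes through them, $x,y,z$ are distinct and there exist distinct vertices $t_A,t_B\notin\{x,y,z\}$ together with six internally vertex-disjoint paths $P_A=t_A\dots x$, $P_B=t_B\dots x$, $Q_A=t_A\dots y$, $Q_B=t_B\dots y$, $R_A=t_A\dots z$, $R_B=t_B\dots z$. These paths form a ``theta-like'' structure (a subdivision of $K_{2,3}$ with branch vertices $t_A,t_B$ on one side and $x,y,z$ on the other). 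The idea is that $\{t_A\}$ and $\{t_B\}$ are the ``cores'' of the two sides $A$ and $B$ of the splitter, and I should grow these to the full sets $A$ and $B$ by a careful choice argument: among all such $K_{2,3}$-subdivisions, pick one minimizing (lexicographically) the total number of internal vertices, or pick $t_A,t_B$ as ``far'' into the structure as possible. Then define $X,Y,Z$ to be the components of $G-(A\cup B)$ containing $x,y,z$; the minimality/extremality of the chosen paths is what should force conditions~(\ref{i:xayaza})--(\ref{i:component}): if there were a chord or extra connection violating, say,~(\ref{i:xayaza}), one could reroute to find a cycle through $x,y,z$ or a ``better'' configuration. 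The case split $|A|=1$ versus $|A|=3$ (and likewise for $B$) corresponds to whether $t_A$ itself separates $x,y,z$ from the rest, or whether one genuinely needs a three-vertex cut on that side; condition~(\ref{i:1or3}) should drop out of the analysis rather than being imposed. Condition~(\ref{i:2con}) (that $G-X$, $G-Y$, $G-Z$ are $2$-connected) needs a separate short argument: if, say, $G-X$ had a cutvertex or a $1$-cut, one could use $2$-connectivity of $G$ to route a path avoiding that vertex and again build a cycle through $x,y,z$ or contradict extremality.

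The main obstacle is the ``only if'' direction, and within it, the bookkeeping needed to pin down exactly which of the seven conditions holds and to carry out all the rerouting arguments simultaneously. The delicate point is choosing the right extremal quantity to minimize so that \emph{every} violation of \emph{every} splitter condition leads to either a cycle through $x,y,z$ (contradicting the hypothesis) or a strictly better configuration (contradicting extremality) --- a single clean choice that handles all of~(\ref{i:xyz})--(\ref{i:component}) is the crux. I expect to need the Menger-type lemmas of Section~\ref{sec:Menger} (Lemma~\ref{l:menger2}, the Fan Lemma~\ref{l:fanLemma}, and especially Lemma~\ref{l:2cutsetCycle}) as the engine for the rerouting: whenever some set that ``should'' be a $2$-cut or $3$-cut is not, a fan or a pair of disjoint paths produces the forbidden cycle. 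I would organize the proof as a sequence of numbered claims, each establishing one splitter condition for a well-chosen minimal configuration, with the hardest claims being the structure of components other than $X,Y,Z$ and the edges between $A$ and $B$, i.e.\ condition~(\ref{i:component}).
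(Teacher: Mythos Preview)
Your overall plan is on the right track and shares with the paper both the starting point (Lemma~\ref{l:cycleOrTheta}) and the toolbox (the Menger-type Lemmas~\ref{l:menger2} and~\ref{l:2cutsetCycle}), but the specific extremal choice you are groping for is not the one the paper uses, and your formulation leaves the crux unresolved. The paper does \emph{not} optimize over $K_{2,3}$-subdivisions or push $t_A,t_B$ ``far''. Instead, having fixed one theta-configuration, it observes via Lemma~\ref{l:2cutsetCycle} that there must be a $2$-element set $\{x_A,x_B\}$ separating $x$ from the cycle $yQ_At_AR_AzR_Bt_BQ_By$ (with $x_A\in P_A-x$, $x_B\in P_B-x$), and then chooses $\{x_A,x_B\}$ so as to \emph{maximize} the component $X$ of $x$ in $G-\{x_A,x_B\}$; the separators $\{y_A,y_B\}$ and $\{z_A,z_B\}$ are defined symmetrically. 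One then sets $A=\{x_A,y_A,z_A\}$, $B=\{x_B,y_B,z_B\}$. Conditions~(\ref{i:xyz})--(\ref{i:AB}) are immediate; condition~(\ref{i:1or3}) and condition~(\ref{i:2con}) follow from short arguments where a violation contradicts the maximality of $X$, $Y$ or $Z$. Your idea that ``minimizing internal vertices of the subdivision'' would do this job is not obviously equivalent and you yourself flag it as unsettled.

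There is also a substantial ingredient for condition~(\ref{i:component}) that your plan does not anticipate. When $|A|=|B|=3$, the paper first proves an auxiliary claim: there exist \emph{$2$-connected} vertex-disjoint subgraphs $G_A,G_B$ of $G-(X\cup Y\cup Z)$ with $A\subseteq V(G_A)$ and $B\subseteq V(G_B)$. This is obtained by a second, independent extremal argument (minimizing a cutvertex-counting parameter over pairs of connected subgraphs containing $A$ and $B$ respectively). Only with $G_A,G_B$ $2$-connected can one run the final rerouting: a forbidden $A$--$B$ path yields, via Lemma~\ref{l:menger2} applied inside $G_A$ and $G_B$, two disjoint paths on each side that combine with $P,Q,R$ into a cycle through $x,y,z$. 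Your sketch (``if there were a chord\dots one could reroute'') is correct in spirit, but without the $2$-connectivity of $G_A$ and $G_B$ the rerouting step has no handle, and nothing in your proposed extremality supplies it.
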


\begin{proof}
  If $G$ has a splitter it is a routine matter to check  that no
  cycle exists through $x, y, z$.

  Conversely, suppose that no cycle goes through $x, y, z$.  We apply
  Lemma~\ref{l:cycleOrTheta} to $G$ and $x, y, z$: this defines six
  paths $P_A$, $P_B$, $Q_A$, $Q_B$, $R_A$ and~$R_B$.  There must exist
  a pair $\{x_A, x_B\}$ that is an $(x$, $y Q_A t_A R_A z R_B t_B Q_B
  y)$-separator, for otherwise, by Lemma~\ref{l:2cutsetCycle}, there
  is a cycle through $x, y, z$.  Because of the paths $P_A$ and $P_B$,
  we must have, up to a relabelling, $x_A \in P_A - x$ and $x_B \in
  P_B - x$.  Let $X$ be the component of $x$ in $G - \{x_A, x_B\}$.
  We choose $x_A$ and $x_B$ so as to maximize the size of $X$.
  Similarly, there exists a $(y, xP_At_AR_AzR_Bt_BP_Bx)$-separator
  $\{y_A, y_B\}$, where $y_A \in Q_A - y$ and $y_B \in Q_B - y$.  We
  choose $y_A$ and $y_B$ so as to maximize the size of the component
  $Y$ of $y$ in $G - \{y_A, y_B\}$.  Finally, there exists a $(z, x
  P_A t_A Q_AyQ_Bt_BP_Bx)$-separator $\{z_A, z_B\}$ where $z_A \in R_A
  - z$ and $z_B \in R_B - z$.  We choose $z_A$ and $z_B$ so as to
  maximize the size of the component $Z$ of $z$ in $G - \{z_A, z_B\}$.
  Set $P = x_A P_A x P_B x_B$, $Q = y_A Q_A y Q_B y_B$ and $R = z_A
  R_A z R_B z_B$.

\vspace{6pt}

Set $A = \{x_A, y_A, z_A\}$ and $B = \{x_B, y_B, z_B\}$.  Our goal is
now to prove that $(A,B)$ is a splitter with respect to $x, y, z$.
Conditions~(\ref{i:xyz}) to~(\ref{i:AB}) are satisfied from the
definition of $x_A, \dots, z_B$.  

Let us prove~(\ref {i:1or3}).  Suppose $|A| = 2$.  Hence, up to
symmetry, we may assume that $x_A = y_A$ and $z_A \neq x_A$.  We see
that $x_B \neq y_B$ for otherwise, $\{x_A, x_B\}$ is a $(z, x P_A t_A
Q_AyQ_Bt_BP_Bx)$-separator that contradicts the maximality of $Z$.  If
in $G - (Z \cup \{x_A\})$ there exists a $(\{z_A, z_B\}, \{x_B,
y_B\})$-separator $u$, then $u\in V(R_B)$ and $\{x_A, u\}$ is a $(z, x
P_A t_A Q_AyQ_Bt_BP_Bx)$-separator in $G$ that contradicts the
maximality of $Z$.  Hence, by Lemma~\ref{l:menger2}, there exists two
vertex-disjoint paths in $G - (Z \cup \{x_A\})$ from $\{z_A, z_B\}$ to
$\{x_B, y_B\}$. Together with $x_A P x_B$, $x_A Q y_B$ and $z_A R
z_B$, they  form a cycle through $x, y, z$, a contradiction.  This proves
$|A| = 1$ or $|A| = 3$.  Similarly, we can prove that $|B| = 1$ or
$|B| = 3$.

Let us now prove~(\ref{i:2con}). Suppose for a contradiction that
$G-X$ has a cutvertex $u$.  If $u$ is not in $P_A - x_A$ or in $P_B - x_B$, then
$u$ is a cutvertex of $G$, a contradiction to the 2-connectivity of
$G$.  So, up to symmetry, $u \in P_A - x_A$.  Thus $\{x_B, u\}$ is an $(x$,
$y Q_A t_A R_A z R_B t_B Q_B y)$-separator, a contradiction to the
maximality of $X$.  Hence $G - X$ is 2-connected.  Similarly, $G - Y$
and $G - Z$ are 2-connected.  This proves~(\ref{i:2con}).

  \vspace{6pt}

  Let us now show an intermediate statement.

   \begin{claim}
     \label{c:nocut}
     If $|A| = 3$ and $|B| = 3$, then there exist two 
     subgraphs $G_A$ and $G_B$ of $G -  (X\cup Y \cup Z)$ such that:
     \begin{enumerate}[(a)]
     \item\label{i:disjoint} $G_A$ and $G_B$ are vertex-disjoint;
     \item\label{i:cont} $G_A$ contains $x_A, y_A, z_A$ and $G_B$
       contains $x_B, y_B, z_B$;
     \item\label{i:cut} $G_A$ and $G_B$ are 2-connected.
     \end{enumerate}
   \end{claim}

   \begin{proofclaim}   
     For any graph $H$ we define the parameter $c(H) = \sum_{v\in
       V(H)} (\mathrm{comp}(H-v) -1)$ where $\mathrm{comp}(H-v)$
     denotes the number of components of $H- v$.

     Let $(G_A,G_B)$ be a pair of connected graphs that
     satisfy~(\ref{i:disjoint}) and~(\ref{i:cont}) and such that
     $c(G_A) + c(G_B)$ is minimum.  We refer to this property as the
     \emph{minimality of $(G_A, G_B)$}.  Note that such a pair $(G_A,
     G_B)$ exists because the two graphs $(x_A P_At_A) \cup (y_A Q_A
     t_A) \cup (z_A R_A t_A)$ and $(x_B P_B t_B) \cup (y_B Q_B t_B)
     \cup (z_B R_B t_B)$ are connected and satisfy~(\ref{i:disjoint})
     and~(\ref{i:cont}).
     
     Let us prove~(\ref{i:cut}) by contradiction.  Therefore suppose
     that one of $G_A$ and $G_B$, say $G_A$, has a cutvertex $v_A$.
     If $x_A, y_A, z_A$ are all in the same graph $G_A[C \cup
     \{v_A\}]$ where $C$ is a component of $G_A - v_A$, then
     $(G_A[C\cup \{v_A\}], G_B)$ contradicts the minimality of $(G_A,
     G_B)$.  So, without loss of generality, we may assume that $x_A$
     is in a component $C_A$ of $G_A - v_A$ and that $y_A, z_A$ are
     not in $C_A$.  We suppose moreover that $v_A$ is chosen so as to
     maximize the size of $C_A$.  If $G_B$ admits a vertex $v_B$ such
     that $x_B$ is in a component $C_B$ of $G_B - v_B$ and $y_B, z_B$
     are not in $C_B$, then we choose $v_B$ such that the component
     $C_B$ of $G_B - v_B$ that contains $x_B$ is maximal.  Else, we
     set $v_B = x_B$ and $C_B = \emptyset$.

     In $G$, $\{v_A, v_B\}$ is not an $(x$, $y Q_A t_A R_A z R_B t_B
     Q_B y)$-separator because of the maximality of $X$.  So, there
     exists a path $S$ of $G$ with one end $s$ in $C_A\cup C_B$, the
     other end $s'$ in $(G_A \cup G_B) - (\{v_A, v_B\} \cup C_A \cup
     C_B)$, no internal vertex of which is in $V(G_A) \cup V(G_B) \cup
     X \cup Y \cup Z$ and no edge of which is in $E(G_A) \cup E(G_B)$.
     Up to symmetry, we assume $s \in C_A$.  We have $s' \in G_B -
     (\{v_B\} \cup C_B)$, for otherwise $(G_A \cup S, G_B)$
     contradicts the minimality of $(G_A, G_B)$ because $\mathrm{comp}(G_A-v_A)> \mathrm{comp}(G_A\cup S -v_A)$ and
     for every internal vertex $t$ of $S$, the graph $G_A\cup S -t$ is
     connected.  
     
     If $G_B$ admits an $(\{x_B, s'\}, \{y_B, z_B\})$-separator $w$,
     then $w$ is such that $x_B$ and $v_B$ are in a component $C$ of
     $G_B - w$ and $y_B, z_B$ are not in $C$.   So $w$ contradicts the
     maximality of $C_B$.  Thus in $G_B$ no $(\{x_B, s'\}, \{y_B,
     z_B\})$-separator exists. Hence, by Lemma~\ref{l:menger2}, in
     $G_B$, up to symmetry between $y_B$ and $z_B$, there are two
     vertex-disjoint paths $T_B = x_B \dots y_B$ and $T'_B = s' \dots
     z_B$.  In $C_A$, there is a path $T_A$ from $s$ to $x_A$.  In $G_A -
     C_A$ there is a path $T'_A$ from $y_A$ to $z_A$ (because $G_A -
     C_A$ is connected since $G_A$ is connected and $C_A$ is a
     component of $G_A - v_A$).  We observe that $P\cup Q \cup R \cup
     S \cup T_A \cup T'_A \cup T_B \cup T'_B$ is a cycle that goes
     through $x, y, z$, a contradiction.
   \end{proofclaim}

   To finish the proof, suppose for a contradiction that
   Conditions~(\ref{i:component}) fails.  This means whithout loss of
   generality that $|A| = |B| = 3$ and there exists a path $S$ from
   $x_A$ to $y_B$ in $G$ which contains no vertex of $\{y_A, z_A, x_B,
   z_B\}$.  Path $S$ has one end in $G_A$ and one end in $G_B$ and
   $G_A, G_B$ are vertex-disjoint, so $S$ contains a subpath $S'$ with
   one end $s_A$ in $G_A$, one end $s_B$ in $G_B$, no internal vertex
   in $G_A \cup G_B$ and no edge of $S'$ is an edge of $G_A \cup G_B$.
   Note that $S'$ contains no vertex of $\{y_A, z_A, x_B, z_B\}$.  We
   reach a contradiction by considering two cases.

   {\noindent\bf Case~1:} in $G_A$ there exist two vertex-disjoint
   paths $T_A = x_A \dots s_A$, $T'_A = y_A \dots z_A$; or in $G_B$,
   there exist two vertex-disjoint paths $T_B = y_B \dots s_B$, $T'_B
   = x_B \dots z_B$.  Up to symmetry, we suppose that $T_A$ and $T'_A$
   exist.  Let us apply Lemma~\ref{l:menger2} in $G_B$.  An $(\{x_B,
   s_B\}, \{y_B, z_B\})$-separator would be a cutvertex of $G_B$, a
   contradiction to~(\ref{c:nocut}).  So, there exist two
   vertex-disjoint paths $T_B, T'_B$ between $\{x_B, s_B\}$ and
   $\{y_B, z_B\}$.  Note that $\{x_B, s_B\}$ has two elements because
   $S'$ has no vertex in $\{y_A, z_A, x_B, z_B\}$ (but $s_B=y_B$ is
   possible).  We see that $S' \cup P \cup Q \cup R \cup T_A \cup T'_A
   \cup T_B \cup T'_B$ is a cycle through $x, y, z$, a contradiction.

   {\noindent\bf Case 2:} we are not in Case 1.  We apply
   Lemma~\ref{l:menger2} in $G_A$ to $\{x_A, y_A\}$ and $\{s_A,
   z_A\}$.  Since we are not in Case~1, this gives two vertex-disjoint
   paths $T_A = x_A \dots z_A$ and $T'_A = y_A \dots s_A$.  We apply
   Lemma~\ref{l:menger2} in $G_B$ to $\{x_B, y_B\}$ and $\{s_B,
   z_B\}$.  Since we are not in Case~1, this gives two vertex-disjoint
   paths $T_B = x_B \dots s_B$ and $T'_B = y_B \dots z_B$.  We see
   that $S' \cup P \cup Q \cup R \cup T_A \cup T'_A \cup T_B \cup
   T'_B$ is a cycle through $x, y, z$, a contradiction.
\end{proof}

\section{Twins in 3-connected almost wheel-free graphs}
\label{sec:Twin}

Our goal in this section is to prove Theorem~\ref{th:twin}.
Throughout all this section, $G$ is an almost wheel-free 3-connected
graph (recall that almost wheel-free graphs are defined before
Corollary~\ref{col:3con}).

\begin{lemma}
  \label{l:noTriangle}
  $G$ contains no triangle. 
\end{lemma}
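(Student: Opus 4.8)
The plan is to argue by contradiction, using the 3-connectivity and the ``almost wheel-free'' hypothesis. Suppose $G$ contains a triangle $\{a,b,c\}$. Since $G$ is almost wheel-free, $W(G)$ contains at most two vertices, so at least one vertex of the triangle, say $a$, is not in $W(G)$. The key observation is that a triangle on $\{a,b,c\}$ combined with any cycle of $G-a$ through $b$ and $c$ would produce a wheel centered at $a$ (the rim being a chordless subcycle of $\text{triangle} + \text{path}$ through $b,c$ avoiding $a$; more carefully, $a$ together with $b$, $c$ and a third neighbor on such a cycle yields a wheel with center $a$). So to reach a contradiction it suffices to exhibit a cycle of $G-a$ passing through both $b$ and $c$ — equivalently, two vertex-disjoint paths from $b$ to $c$ in $G-a$.

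First I would check that such paths exist: $G$ is 3-connected, so $G-a$ is 2-connected, and since $b,c$ are two vertices of a 2-connected graph, there is a cycle of $G-a$ through $b$ and $c$ (this is immediate 2-connectivity, or Theorem~\ref{th:cycleThroughK} with $k=2$). This gives a cycle $C'$ of $G$ avoiding $a$ and passing through $b$ and $c$. Now I need a third neighbor of $a$ on (a chordless cycle built from) $C'$: the vertex $a$ has neighbors $b$ and $c$ on $C'$; if $a$ has a third neighbor on $C'$ we are essentially done, but in general it need not. The fix is to use the edge $bc$: consider the cycle $C'$ together with the chord-free structure. Since $b$ and $c$ are adjacent, the cycle $C'$ is edge-partitioned into two $b$–$c$ paths $S_1$ and $S_2$; then $S_1 + bc$ and $S_2 + bc$ are two cycles, and at least one of them, say $S_1 + bc$, together with $a$ forms a wheel — no, rather: the center $a$ sees $b$ and $c$ only. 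I should instead invoke Lemma~\ref{l:fan} / Lemma~\ref{l:2cutsetCycle}: treat the cycle $S_1 + bc$ (a cycle of $G$ not containing $a$, since $S_1$ avoids $a$) and the vertex $a \notin V(S_1+bc)$; if there is no $(a, S_1+bc)$-separator then by Lemma~\ref{l:fan} there is a 3-fan from $a$ to this cycle, giving $a$ three neighbors on a cycle, hence a wheel centered at $a$, contradiction. So there must be an $(a, C'')$-separator $\{p,q\}$ where $C'' = S_1 + bc$.

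The main obstacle is handling this separator $\{p,q\}$: we then know $a$ is ``cut off'' from $C''$ by at most two vertices, yet $a$ has at least three neighbors in $G$ and $b,c \in N(a) \cap V(C'')$, so $\{p,q\}$ must already be $\{b,c\}$ or contain one of them — and from here I want to iterate or re-route. Concretely, if $\{p,q\}$ separates $a$ from $V(C'') \setminus \{p,q\}$, then since $b,c \in N(a)$ we must have $\{b,c\} \subseteq \{p,q\}$, i.e.\ $\{p,q\} = \{b,c\}$; but then $b$ and $c$ together separate $a$ from the rest of $C''$, and also from any other vertex, contradicting that $G$ is 3-connected unless $V(C'') = \{b,c,\dots\}$ with the component of $a$ small — and here the third neighbor of $a$ (which exists by 3-connectivity, as $\deg(a)\ge 3$) must lie in the component of $a$ in $G - \{b,c\}$, but that component has a 2-cut $\{b,c\}$ to the rest, contradicting 3-connectivity of $G$ as long as $G$ has more than a few vertices; the tiny cases ($|V(G)|$ small) are checked directly, and indeed a 3-connected graph on $\le 4$ vertices is $K_4$, which is a wheel, so $G$ is not wheel-free/almost wheel-free in the relevant way — actually $K_4$ \emph{is} almost wheel-free trouble, so I must be careful: in $K_4$ every vertex is in $W(G)$, contradicting $|W(G)|\le 2$. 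Hence no triangle survives. I expect the delicate point to be exactly this separator analysis and the bookkeeping of which of $a,b,c$ lies outside $W(G)$; writing it cleanly may require symmetry between $b$ and $c$ and possibly repeating the argument for a second triangle vertex.
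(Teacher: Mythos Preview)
Your separator analysis in Case~B is essentially sound (modulo picking the $S_1$ of length $\ge 2$), but the other branch of your dichotomy has a genuine gap. You write: ``if there is no $(a, S_1+bc)$-separator then by Lemma~\ref{l:fan} there is a 3-fan from $a$ to this cycle, giving $a$ three neighbors on a cycle, hence a wheel centered at $a$.'' This inference is false. A 3-fan from $a$ to a cycle $C''$ produces a subdivision of $K_4$, not a wheel: the second vertices $a_1,a_2,a_3$ of the three fan-paths are indeed three distinct neighbours of $a$, but in the graph $C'' \cup (P_1-a)\cup(P_2-a)\cup(P_3-a)$ they are pendant ends of paths hanging off $C''$, and there is in general no cycle through all three of them. (Concretely: subdivide every edge of $K_4$ once; the resulting graph is exactly a 3-fan from one branch vertex to the opposite triangle, yet it contains no wheel.) So the ``no separator'' case does not yield $a\in W(G)$ as you claim, and your case split collapses.

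The paper's argument sidesteps this entirely with a one-line construction you almost reached but did not quite isolate. Pick any triangle vertex $v$ with neighbours $u,w$ on the triangle; by 3-connectivity $v$ has a further neighbour $v'\notin\{u,w\}$. In the 2-connected graph $G-v$, take a 2-fan from $v'$ to $\{u,w\}$: its two paths together with the edge $uw$ form a cycle through $u,w,v'$ avoiding $v$, so $v\in W(G)$. Doing this for each of the three triangle vertices gives $|W(G)|\ge 3$, contradicting almost wheel-freeness directly. Note that this is exactly the missing ingredient in your sketch: rather than starting from an arbitrary cycle through $b,c$ and trying to force a third neighbour of $a$ onto it, you \emph{build} the cycle around the third neighbour from the start.
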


\begin{proof}
  Let $u$, $v$ and $w$ be three pairwise adjacent vertices in $G$.
  Since $G$ is 3-connected, $v$ has a neighbor $v' $ distinct from $u$ and $w$.  In
  $G - v$, there is a 2-fan from $v'$ to $\{u, w\}$, that together
  with $v$ forms a wheel centered at $v$.  Similarly, there exist
  wheels centered at $u$ and $w$.  So, $|W(G)| \geq 3$, a
  contradiction.
\end{proof}

We denote by $K_{3, 3}\sm e$ the graph obtained from $K_{3, 3}$ by
removing one edge. 

\begin{lemma}
  \label{l:k33-e}
  If $G$ has a subgraph isomorphic to $K_{3, 3}\sm e$, then $G$ is
  isomorphic to $K_{3, 3}$. 
\end{lemma}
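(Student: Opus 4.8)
Let $H$ be a subgraph of $G$ isomorphic to $K_{3,3}\sm e$. Write the two sides of the (near-)bipartition as $\{a_1,a_2,a_3\}$ and $\{b_1,b_2,b_3\}$, where every $a_ib_j$ is an edge of $H$ except that $a_3b_3$ is missing. So each of $a_1,a_2$ has the three neighbors $b_1,b_2,b_3$ inside $H$, each of $b_1,b_2$ has the three neighbors $a_1,a_2,a_3$ inside $H$, while $a_3$ and $b_3$ have only two neighbors inside $H$.

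\begin{proof}
  Let $H\subseteq G$ be isomorphic to $K_{3,3}\sm e$, with vertices
  $a_1,a_2,a_3,b_1,b_2,b_3$ and all edges $a_ib_j$ present except
  $a_3b_3$.

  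First we argue that $V(H)=V(G)$.  The key observation is that $H$
  already contains wheels: for instance $a_1b_1a_2b_2a_1$ is a
  chordless $4$-cycle in $G$ (there is no chord by
  Lemma~\ref{l:noTriangle}, which forbids triangles), and $a_3$ is
  adjacent to both $a_1$-neighbours $b_1$ and $b_2$ on it, and also to
  $a_2$; wait — we must be careful and rather find an honest wheel.
  Consider the chordless cycle $C=a_1b_1a_3b_2a_1$ (chordless since
  $a_1a_3\notin E$, $b_1b_2\notin E$ by Lemma~\ref{l:noTriangle}).
  Then $a_2$ is adjacent to $a_1$? no.  We instead use $a_2$ together
  with $b_1,b_2,b_3$: the cycle $b_1a_1b_2a_2\dots$ is not chordless
  because $a_2b_1\in E$.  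To get a clean wheel, take the $6$-cycle
  $a_1b_1a_3b_2a_2b_3a_1$; this cycle is chordless precisely because
  the only possible chords are $a_3b_3$ (absent) and edges inside a
  side (absent by Lemma~\ref{l:noTriangle}).  Now $b_1$ has neighbours
  $a_1,a_3,a_2$ all on this cycle, so $(C,b_1)$ is a wheel centred at
  $b_1$; likewise $(C,b_2)$ and, using the chords at $a_i$,
  $(C,a_1),(C,a_2)$ are wheels, so $a_1,a_2,b_1,b_2\in W(G)$.  Since
  $G$ is almost wheel-free, $W(G)$ has at most two vertices, a
  contradiction — unless in fact $C$ is not chordless, which can only
  happen if $a_3b_3\in E(G)$.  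So $a_3b_3\in E(G)$, i.e.\ $G[V(H)]$
  contains a full $K_{3,3}$; call it $K$.

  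Now suppose for contradiction that $V(H)\neq V(G)$, equivalently
  that $K\neq G$.  Since $G$ is connected, some vertex $w\notin V(K)$
  has a neighbour in $V(K)$, say $w\sim a_1$; and since $G$ is
  $3$-connected, $G-V(K)$ together with its attachments has enough
  room that $w$ has (via the Fan Lemma, Lemma~\ref{l:fanLemma},
  applied in $G$ to $w$ and $V(K)$, or just using
  $3$-connectivity directly) at least one further route to $V(K)$
  avoiding $a_1$.  The point is to produce a chordless cycle through
  three vertices of one side of $K$, or through three neighbours of a
  vertex of $K$, missing only its centre: concretely, take a $4$-cycle
  $a_1b_1a_2b_2a_1$ of $K$; in $G-\{a_1,a_2\}$ there is a path from
  $b_1$ or $b_2$ out through $w$'s side and back, but the cleanest
  route is: $a_3$ is adjacent to $b_1,b_2,b_3$, and $b_3$ is adjacent
  to $a_1,a_2,a_3$, so $a_3$ and $b_3$ are adjacent and each already
  lies on a wheel of $K$ exactly as above; we have already shown
  $a_1,a_2,b_1,b_2\in W(G)$, and by the symmetric argument (swapping
  the roles of the sides and of the ``missing'' indices, now vacuous
  since $K$ is complete bipartite) every vertex of $K$ lies on a
  wheel, so $W(G)\supseteq V(K)$ with $|V(K)|=6>2$, contradicting that
  $G$ is almost wheel-free.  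Hence $V(G)=V(K)=V(H)$.

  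Finally we must show $G$ has no edge inside a side of the
  bipartition, i.e.\ $G=K_{3,3}$ exactly.  Any such edge, say
  $a_1a_2$, would create the triangle $a_1a_2b_1$, contradicting
  Lemma~\ref{l:noTriangle}.  Therefore $G$ is isomorphic to
  $K_{3,3}$.
\end{proof}

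\noindent\emph{Remark on the plan.}  The main obstacle, and the part
that needs the most care, is the very first step: exhibiting a
genuinely chordless cycle inside the $K_{3,3}\sm e$ together with a
vertex having three neighbours on it, so as to certify that four
distinct vertices lie in $W(G)$.  The device above is to use the
$6$-cycle $a_1b_1a_3b_2a_2b_3a_1$: Lemma~\ref{l:noTriangle} kills all
``same side'' chords, and the only cross chord it could have is
$a_3b_3$; so either this cycle is a rim giving four wheel-centres at
once (contradiction), or $a_3b_3\in E(G)$ and we have upgraded
$K_{3,3}\sm e$ to $K_{3,3}$.  Once a full $K_{3,3}$ sits inside $G$,
the rest is soft: $4$-connectivity-style arguments
(Lemma~\ref{l:no4}/\,Lemma~\ref{l:fanLemma}) or direct wheel-counting
force $V(G)=V(H)$, and Lemma~\ref{l:noTriangle} forbids any extra
edge.
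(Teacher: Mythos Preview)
Your argument has a fatal gap at the very first step. In the $6$-cycle $C=a_1b_1a_3b_2a_2b_3a_1$, the vertices $b_1,b_2,a_1,a_2$ that you propose as wheel centres all \emph{lie on} $C$; a wheel $(C,v)$ requires $v\notin V(C)$, so none of $(C,b_1),(C,b_2),(C,a_1),(C,a_2)$ is a wheel. (Your chord analysis is also off: $a_1b_2$ and $a_2b_1$ are non-consecutive on $C$ and are edges of $H$, so $C$ has two chords in $G$ regardless of whether $a_3b_3\in E(G)$.) The deeper obstruction is that $K_{3,3}$ and $K_{3,3}\sm e$ are themselves wheel-free; hence no cycle and centre drawn entirely from $V(H)$ can ever produce a wheel, and your device ``either the $6$-cycle is a rim giving four centres, or $a_3b_3\in E(G)$'' cannot work in principle. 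Your second step then collapses: it repeatedly appeals to ``we have already shown $a_1,a_2,b_1,b_2\in W(G)$'' and to wheels ``of $K$'', and it never actually uses the extra vertex $w$ to construct a wheel.

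The paper's proof takes a route that genuinely exploits vertices and paths outside $H$. With sides $\{a,b,c\}$ and $\{x,y,z\}$ and $ax$ the possibly missing edge, it first proves that $G-\{a,b,c\}$ contains no path between any two of $x,y,z$ (and symmetrically): such a path $P$ from $x$ to $y$ would make both $(xPyazbx,c)$ and $(xPyazcx,b)$ wheels, giving $\{b,c\}\subseteq W(G)$ with $bc\notin E(G)$. Then, if $|V(G)|>6$, some $x$ has a neighbour $v\notin\{a,b,c\}$; a $2$-fan from $v$ in $G-x$ to $\{a,b,c,y,z\}$ is forced by the claim to land in $\{b,c\}$, yielding a wheel centred at $x$. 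A short further argument forces $ax\in E(G)$ and then exhibits a wheel centred at $y$, so $\{x,y\}\subseteq W(G)$ with $xy\notin E(G)$, the final contradiction. The idea you are missing is precisely this use of paths outside the $K_{3,3}\sm e$ to build rims on which a vertex of $H$ has three neighbours.
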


\begin{proof}
  Suppose that $G$ contains 6 vertices $a, b, c, x, y$ and $z$ such
  that there are all possible edges between $\{a, b, c\}$ and $\{x, y,
  z\}$ except possibly $ax$.  If there are no other vertices, then,
  since $G$ is 3-connected and there is no triangle
  by Lemma~\ref{l:noTriangle}, $a$ must be adjacent to $x$.  So, $G$ is
  isomorphic to $K_{3, 3}$. 

  \begin{claim}
    \label{c:Pxy}
    In $G - \{a, b, c\}$ there is no path from $x$ to $y$ and no path
    from $x$ to $z$; in $G - \{x, y, z\}$ there is no path from $a$
    to $b$ and no path from $a$ to $c$.
  \end{claim}
  
  \begin{proofclaim}
    If $P$ is a path of $G - \{a, b, c\}$ from $x$ to $y$, then $(xPy
    a z b x, c)$  and
    $(xPy a z c x, b)$ are wheels. 
    Hence $\{b, c\} \subseteq W(G)$, a contradiction because
    by Lemma~\ref{l:noTriangle}, $bc\notin E(G)$.  The other cases are
    symmetric.
  \end{proofclaim}

  If $G$ has more than 6 vertices, then without loss of generality,
  $x$ has a neighbor $v \notin \{ a, b, c\}$.  Let $P, Q$ be a 2-fan from $v$
  to $\{a, b, c, y, z\}$ in $G - x$.  If one of $P, Q$ is from $v$
  to $y$ or $z$, then, together with $x$, it forms a path that
  contradicts~(\ref{c:Pxy}).  So, $P, Q$ is in fact a 2-fan from $v$
  to $\{a, b, c\}$.  If one of $P$ and $Q$ ends in $a$, then $P
  \cup Q$ is a path that contradicts~(\ref{c:Pxy}).  So, $P, Q$ is in
  fact a 2-fan from $v$ to $\{b, c\}$.  Without loss of generality $P$ ends in $b$ and $Q$ in $c$.
  Then $(vPbyazcQv, x)$ is a wheel, so $x\in W(G)$. 
  Symmetrically, if $a$ has a neighbor $u \notin \{ x, y, z\}$, then $a\in W(G)$.
   Hence, either $a$ has a neighbor $u \notin\{x, y, z\}$, so $\{a, x\}
  \subseteq W(G)$ and $ax \in E(G)$, or $a$ has no neighbor $u \notin \{x, y,
  z\}$ and so $ax\in E(G)$ because $a$ has degree at least $3$.
  In both cases, $ax \in E(G)$.  Hence $(vPb x a z cQv, y)$ is a wheel.  So, $\{x, y\}\subseteq 
  W(G)$ which is a contradiction because $xy\notin E(G)$ by
  Lemma~\ref{l:noTriangle}.
\end{proof}

Two vertices $u$ and $v$ in a graph are \emph{twins} if they are
non-adjacent, of degree~3, and $N(u) = N(v)$.

\begin{lemma}
  \label{l:k23Twin}
  Suppose that two distinct vertices $a$ and $b$  of $G$ have three distinct common neighbors $x,
  y, z$.  Then $a$ and $b$ are twins.
\end{lemma}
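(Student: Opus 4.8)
The plan is to prove that $\deg(a)=\deg(b)=3$. Since $a$ and $b$ are each adjacent to the three distinct vertices $x,y,z$, this forces $N(a)=N(b)=\{x,y,z\}$, and as $ab\notin E(G)$ by Lemma~\ref{l:noTriangle}, $a$ and $b$ are then twins. If $G\cong K_{3,3}$, then $\{x,y,z\}$ is exactly the part of $G$ avoiding $a$ and $b$, so $a$ and $b$ are twins and we are done; hence assume $G\not\cong K_{3,3}$, so that by Lemma~\ref{l:k33-e} $G$ has no subgraph isomorphic to $K_{3,3}\setminus e$. Suppose for a contradiction that, say, $\deg(a)\ge 4$, and choose $w\in N(a)\setminus\{x,y,z\}$; note $w\neq b$ since $ab\notin E(G)$. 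Then $w$ has at most one neighbour in $\{x,y,z\}$: otherwise, say $w\sim x$ and $w\sim y$, the (distinct) vertices $a,b,w$ and $x,y,z$ span in $G$ all nine edges between $\{a,b,w\}$ and $\{x,y,z\}$ except possibly $wz$, giving a subgraph of $G$ isomorphic to $K_{3,3}$ or $K_{3,3}\setminus e$, contrary to the previous sentence. Relabelling, we may assume $w\not\sim y$ and $w\not\sim z$.

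It suffices to exhibit a cycle $C$ of $G-a$ through three neighbours of $a$: then $(C,a)$ is a wheel centred at $a$, so $a\in W(G)$, and since $\deg(a)\ge 4$ this contradicts the hypothesis that $G$ is almost wheel-free. To find such a cycle, put $H=G-\{a,b\}$, which is connected because $\kappa(G)=3$, and let $T$ be an inclusionwise minimal connected subgraph of $H$ containing $x,y,z$. Then $T$ is a tree whose leaves lie in $\{x,y,z\}$, so $T$ is either a path or a spider with a centre $c\notin\{x,y,z\}$ and three internally disjoint paths $S_x,S_y,S_z$ from $c$ to $x,y,z$ respectively. If $T$ is a path, then up to relabelling it runs from $x$ to $z$ with $y$ inside it, and since $b\sim x,z$ and $b\notin V(T)$ the walk $xTzbx$ is a cycle of $G-a$ through $x,y,z$, three neighbours of $a$; we are done. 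So assume $T$ is a spider with centre $c$; then $T$ together with $b$ and the edges $bx,by,bz$ forms a subdivision $\Theta$ of $K_{2,3}$ inside $G-a$, with branch vertices $c$ and $b$.

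It remains to use the fourth neighbour $w$ of $a$. If $a\sim c$, then $a$ is adjacent to $c,x,y$, and the cycle $cS_xxbyS_yc$ of $G-a$ passes through these three vertices; done. If $a\not\sim c$ but $w$ lies on a leg of the spider, say $w$ is an internal vertex of $S_x$, then writing $S_x$ as the concatenation of a path from $c$ to $w$ and a path from $w$ to $x$, the cycle formed by these two sub-paths together with the edges $xb$, $by$ and the leg $S_y$ passes through $w,x,y$, again three neighbours of $a$; done (the cases $w\in S_y$ and $w\in S_z$ are symmetric). The remaining case, where $a\not\sim c$ and $w\notin V(\Theta)$, is the one I expect to be the main obstacle. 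Here $G-a$ is $2$-connected, so there is a $2$-fan from $w$ to $V(\Theta)$ by Lemma~\ref{l:fanLemma}; when its two attachment vertices are not exactly $\{c,b\}$ one can splice it into $\Theta$ — using the edges $bx,by,bz$ and the hypothesis $w\not\sim y,z$ to avoid forbidden shortcuts — so as to route a cycle of $G-a$ through $w$ and two of $x,y,z$, finishing as before. When every $2$-fan from $w$ to $V(\Theta)$ attaches at $c$ and $b$, the set $\{c,b\}$ separates $w$ from the legs of $\Theta$ in $G-a$, so $\{a,b,c\}$ is a $3$-cutset of $G$; since $G$ is $3$-connected the component containing $w$ must send an edge to $b$, whence $\deg(b)\ge 4$ as well, and one derives the contradiction from this added symmetry (producing a wheel centred at $b$ by the same scheme applied to a fourth neighbour of $b$, or by analysing the $3$-cutset directly). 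Checking throughout that the paths involved can be chosen pairwise vertex-disjoint in each configuration is the main bookkeeping burden of the argument; the conceptual content lies in the two observations that $w$ has at most one neighbour among $x,y,z$ and that $b$ (being adjacent to all of $x,y,z$) is forced to be a branch vertex of $\Theta$ once we are in the spider case.
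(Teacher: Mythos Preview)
Your overall plan---assume $\deg(a)\ge 4$, pick a fourth neighbour $w$, and find a cycle in $G-a$ through three neighbours of $a$---is natural, but the splicing step has a real hole. Consider the case where your 2-fan from $w$ to $V(\Theta)$ attaches at $q=b$ and at a \emph{strict interior} vertex $p$ of a leg, say $p\in S_x\setminus\{x,c\}$. Then any path in $\Theta$ from $p$ to $b$ passes through \emph{exactly one} of $x,y,z$ (it either goes $p\,S_x\,x\,b$, or $p\,S_x\,c\,S_y\,y\,b$, or $p\,S_x\,c\,S_z\,z\,b$), so the cycle you build contains $w$ and only one of $x,y,z$---two neighbours of $a$, not three. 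Your phrase ``one can splice it into $\Theta$ \ldots\ through $w$ and two of $x,y,z$'' is simply false in this subcase; the hypothesis $w\not\sim y,z$ does not help here. Your fallback for the $\{b,c\}$ case is also circular: you correctly deduce that $\{a,b,c\}$ is a cutset and hence $\deg(b)\ge 4$, but rerunning the argument with $b$ in place of $a$ uses the \emph{same} spider $T$ in $G-\{a,b\}$, and the component containing the fourth neighbour of $b$ is separated from the legs by $\{a,c\}$ in $G-b$ for exactly the same reason---so you land back in the bad case with no progress.

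The paper sidesteps this by \emph{not} trying to put $a$ into $W(G)$. It takes a 2-fan from $a'$ (your $w$) to the set $\{x,y,z,b\}$ in $G-a$; if both feet lie in $\{x,y,z\}$ one gets a wheel at $a$ immediately, so one may assume the fan is $P=a'\!\cdots x$, $Q=a'\!\cdots b$. The key idea is then to show that $z$ (and symmetrically $y$) is the centre of a wheel: one proves that $z$ has no neighbour in $V(P)\cup V(Q-b)\cup\{y\}$, so $z$ has a neighbour $z'$ outside $V(P)\cup V(Q)\cup\{a,y\}$; a 2-fan from $z'$ into that set, together with a path there through both $a$ and $b$, yields a cycle through $z',a,b$---three neighbours of $z$. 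This gives $\{y,z\}\subseteq W(G)$ with $yz\notin E(G)$, contradicting almost wheel-free. The point is that $a$ and $b$ sit together in a small, explicitly controlled subgraph, so routing a cycle through both of them (plus one more neighbour of $z$) is much easier than routing a cycle through the possibly far-away vertex $w$ and two of $x,y,z$.
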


\begin{proof}  
  By Lemma~\ref{l:noTriangle}, $x, y$ and $z$ are pairwise
  non-adjacent and $ab\notin E(G)$. 

  Suppose that $a$ has a neighbor $a'$ not in $\{x, y, z\}$.  In $G - a$,
  there is a 2-fan $P, Q$ from $a'$ to $\{x, y, z, b\}$.  We choose
  such a 2-fan which minimizes $|V(P) \cup V(Q)|$.  If
  the ends of $P$ and $Q$ are both in $\{x, y, z\}$, then $P$, $Q$,
  $a$ and $b$ form a wheel centered at $a$.  This is a contradiction
  because $a$ has degree at least~4.  Hence, we may assume up to
  symmetry that $P = a' \dots x$ and $Q = a' \dots b$.  

  \begin{claim}
    \label{c:zNoNeigh}
    $z$ has no neighbors in $\{y\} \cup V(P) \cup V(Q - b)$.
  \end{claim}

  \begin{proofclaim}
    We already showed that $z$ is not adjacent to $y$. 
     If $z$ has neighbors
    in $Q - b$, then there exists a 2-fan from $a'$ to $\{x, z\}$ in
    $G - a$, a contradiction as above.  If $z$ has a neighbor in $P$,
    then from the minimality of $P$ and $Q$, this neighbor must be the
    neighbor $x'$ of $x$ along $P$.  But then, $\{a, b, x', x, y, z\}$
    induces $K_{3, 3} \sm e$ or $K_{3, 3}$, and by
    Lemma~\ref{l:k33-e}, $G$ must be isomorphic to $K_{3, 3}$, a
    contradiction since $|V(G)|\geq 7$ because of $a'$.
  \end{proofclaim}

  \begin{claim}
    \label{c:uabv}
    If $u$ and $v$ are distinct vertices of $G[V(P) \cup V(Q) \cup
    \{a, y\}]$, then there exists a path $R$ from $u$ to $v$ in
    $G[V(P) \cup V(Q) \cup \{a, y\}]$, that contains $a$ and~$b$.
  \end{claim}

  \begin{proofclaim}
    The outcome is clear when $u, v, a, b$ or $v, u, a, b$ appear in
    this order in some cycle of $G[V(P) \cup V(Q) \cup \{a, y\}]$.

    Suppose $u \in V(Q)$.  If $v\in V(Q) \cup \{a\}$, then $u, v, a, b$
    or $v, u, a, b$ appear in this order in $a' Q b y a a'$. So
    suppose $v \notin V(Q) \cup \{a\}$, and in particular $v\notin \{a',
    b\}$.  If $v\in V(P)$, then $R = u Q b y a x P v$.  If $v = y$,
    then $R = u Q b x a v$.  So, we may assume $u\notin V(Q)$ and
    symmetrically $v\notin V(Q)$.

    Suppose $u \in V(P)$.  Recall $u\neq a'$.  If $v\in V(P) \cup
    \{a\}$, then $u, v, a, b$ or $v, u, a, b$ appear in this order in
    $a' P x b y a a'$. If not, then $v = y$, and $R = u P x b Q a' a
    v$.  So we may assume $u\notin V(P)$, and symmetrically $v\notin
    V(P)$.

    Then $\{u, v\} = \{a, y\}$ and  $R=axby$.
  \end{proofclaim}

  Since $G$ is 3-connected and by~(\ref{c:zNoNeigh}), $z$ has a
  neighbor $z' \notin V(P) \cup V(Q) \cup \{a, y\}$.  Let $S= z' \dots
  u, T=z'\dots v$ be a 2-fan in $G - z$ from $z'$ to $V(P) \cup V(Q)
  \cup \{a, y\}$.  Let $R$ be the path obtained in~(\ref{c:uabv}).
  Now, $S\cup T\cup R$ is a cycle in which $z$ has at least three
  neighbors (namely $z'$, $a$ and $b$).  Hence $z$ is
  the center of some wheel of $G$.  Similarly, the existence of a
  wheel centered at $y$ can be proved.  So $\{y, z\} \subseteq W(G)$, a
  contradiction since $yz \notin E(G)$.
\end{proof}

\begin{lemma}
  \label{l:3sep}
  Let $F$ be a fragment of $G$ with $N(F) = \{a, b, c\}$.  Consider
  the graph $G_F$ built from $G[F \cup \{a, b, c\}]\setminus
  \{ab,bc,ca\}$ as follows.  If $a$ (resp.\ $b$, $c$) has at least two
  neighbors in $F$, then add a new vertex $a'$ (resp.\ $b'$, $c'$)
  adjacent to $a$ (resp.\ $b$, $c$), otherwise put $a' =a$ (resp.\
  $b'=b$, $c'=c$).  Add two new vertices $d, d'$ and link them to
  $a'$, $b'$ and $c'$.

  Then $G_F$ is an almost wheel-free 3-connected graph.
\end{lemma}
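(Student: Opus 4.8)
The statement has two parts: $G_F$ is $3$-connected, and $G_F$ is almost wheel-free. I will treat connectivity first, then wheels, mirroring the two-paragraph structure of the proof of Lemma~\ref{l:2conBlock}.

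\textbf{Connectivity.} The plan is to show that $G_F$ has no cutset of size at most~$2$ and that $|V(G_F)|\geq 4$ (the latter is immediate, since $d,d'$ together with at least one of $a',b',c'$ are already four vertices). Suppose $S$ is a cutset of $G_F$ with $|S|\leq 2$. The key observation is that $d$ and $d'$ each have the \emph{same} three neighbors $a',b',c'$, so $\{a',b',c'\}$ is not a cutset, and in fact any cutset $S$ that does not meet $\{a',b',c'\}$ in at least two vertices cannot separate $d$ from $d'$ nor separate any of $a',b',c'$ from each other (there is always a path through $d$ or $d'$). Hence $S$ must contain two of $a',b',c'$, say $a'$ and $b'$. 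Now if $a'=a$ (that is, $a$ had only one neighbor in $F$), then deleting $a'$ from $G_F$ is, from the point of view of $F\cup\{b,c\}$, essentially deleting $a$; similarly for $b'$. One then checks that the corresponding two-element subset of $\{a,b,c\}$ would be a cutset of $G$ whose removal leaves a component contained in $F$, contradicting that $N(F)=\{a,b,c\}$ has size~$3$ (i.e.\ that $\{a,b\}$ is not a cutset of $G$ with a side inside~$F$). The case where $a'\neq a$ is the extra subdlety: $a'$ is a pendant vertex of degree... no, $a'$ has neighbors $a,d,d'$, so it has degree~$3$; I would argue that removing $\{a',b'\}$ with $a'\neq a$ is no better than removing $\{a,b'\}$, because the only vertices reachable only through $a'$ are $d,d'$ (if $b'$ is also removed) or nothing, and $d,d'$ see $c'$ anyway. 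So in all cases one reduces to a size-$\leq 2$ cutset of $G$ inside $F$, contradicting that $F$ is a fragment with $|N(F)|=3=\kappa(G)$.

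\textbf{Wheel-freeness.} Here I would argue that any vertex of $G_F$ that is the center of a wheel lies in $\{a',b',c',d,d'\}$, and moreover that the wheel structure forces such a center to have degree exactly~$3$ and at most one such center, or two adjacent ones. First, $G[F\cup\{a,b,c\}]$ is a subgraph of $G$ hence wheel-free, and $G_F$ is obtained from $G[F\cup\{a,b,c\}]\setminus\{ab,bc,ca\}$ by deleting edges (which cannot create wheels) and then adding the vertices $a',b',c',d,d'$ and the nine edges among them and to $a,b,c$. So any wheel of $G_F$ must use at least one of the new vertices or new edges. The new vertices $a',b',c'$ have degree at most~$3$ (either degree $1$ if $a'=a$, wait—if $a'=a$ then $a'$ is just $a$ with its neighbors in $F$ plus $d,d'$; if $a'\neq a$ then $a'$ has neighbors $a,d,d'$, degree~$3$); the vertices $d,d'$ have degree exactly~$3$ with neighborhood $\{a',b',c'\}$. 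A wheel center needs degree $\geq 3$ in the wheel; I would go through the cases according to which new vertex plays which role. The heart of the matter: a wheel $(C,w)$ in $G_F$ must either be centered at one of $d,d'$ with rim going through $a',b',c'$ — but then $\{a',b',c'\}$ sees three vertices of a cycle, and contracting the pendant bits $a'\!-\!a$ etc.\ and re-expanding through $\overline F$ one would recover a wheel in $G$ centered at the corresponding vertex of $\{a,b,c\}$, contradiction unless... hmm, actually one needs to be careful: $d$ and $d'$ genuinely may be centers in $G_F$. I think the right statement is that $W(G_F)\subseteq\{d,d'\}$ (or a subset), and since $d,d'$ are non-adjacent (their common neighborhood is $\{a',b',c'\}$ and there is no edge $dd'$) and both have degree~$3$, $G_F$ is almost wheel-free by definition. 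So the remaining work is: (1) show no vertex of $G[F\cup\{a,b,c\}]$ or of $\{a',b',c'\}$ is a wheel center in $G_F$ — for $a',b',c'$ because their degree is too small or, if $a'=a$, because a wheel centered at $a$ in $G_F$ through rim vertices including $d$ or $d'$ reduces to a wheel centered at $a$ in $G$ after rerouting through $F$ minus... no wait, we need to reroute \emph{into} $G$ which contains $\overline F$; the rim could pass through $d,d'$ which don't exist in $G$, but $d,d'$ only connect $a',b',c'$, so a rim segment $x\!-\!d\!-\!y$ with $x,y\in\{a',b',c'\}$ can be replaced by a path through $\overline F$ (since $G-N(F)$ restricted to $\overline F$ connects to all of $a,b,c$)... this rerouting is the main obstacle.

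\textbf{Main obstacle.} The delicate point is the wheel analysis: showing precisely that $W(G_F)\subseteq\{d,d'\}$ and that the gadget vertices $a',b',c'$ are never wheel centers. This requires careful case analysis of how a putative wheel in $G_F$ interacts with the attached vertices $d,d',a',b',c'$, and a rerouting argument converting such a wheel (if centered outside $\{d,d'\}$) into a wheel in the wheel-free graph $G$ by replacing any rim/spoke segment passing through $d$, $d'$, or a pendant $a'\!-\!a$ edge with a corresponding path through $\overline F$. The connectivity part is routine once the observation about $d,d'$ having identical neighborhoods is in place; the wheel part is where essentially all the work lies, and I expect it to require splitting into subcases by which of $d,d',a',b',c'$ is the center and which lie on the rim.
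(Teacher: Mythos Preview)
Your wheel-freeness plan rests on a misreading of the definition. You aim for $W(G_F)\subseteq\{d,d'\}$ and then write ``since $d,d'$ are non-adjacent \dots and both have degree~$3$, $G_F$ is almost wheel-free by definition.'' But almost wheel-free requires $W(G_F)$ to be empty, a single degree-$3$ vertex, or two \emph{adjacent} degree-$3$ vertices; two non-adjacent centers would \emph{not} qualify. So even a perfect execution of your plan would not prove the lemma. In fact $d$ and $d'$ are never centers: if $(C,d)$ were a wheel, $C$ would pass through $a',b',c'$; each of these has degree~$3$ with $d$ as one neighbor and $d'$ as another, so $C$ must use the edge to $d'$ at each of them, giving three edges of $C$ incident to $d'$, impossible in a cycle. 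A similar argument rules out $a',b',c'$ as centers. The correct target (and what the paper proves) is $W(G_F)\subseteq W(G)$: any wheel center $w$ of $G_F$ lies in $F\cup\{a,b,c\}$, and one reroutes the part of the rim outside $G[F]$ through $\overline F$ (using a $2$-fan in $G-w$ from a neighbor of $w$ in $\overline F$ to $\{b,c\}$ when $w=a$) to get a wheel of $G$ centered at $w$. Since $G$ is almost wheel-free and the vertices of $F\cup\{a,b,c\}$ have degree in $G_F$ no larger than in $G$, the conclusion follows.

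Your connectivity argument also has a gap. You argue that a $2$-cutset $S$ must contain two of $a',b',c'$, because otherwise $d,d'$ and the remaining $a',b',c'$ stay connected through each other. But this only shows that the gadget side stays in one piece; it says nothing about $S$ separating two vertices of $F$ from each other. You never address the possibility $S\subseteq F$ (or $S$ meeting $F$ and $\{a,b,c\}$). The paper handles this first: for any $x,y\in F$ there are three internally disjoint $x$--$y$ paths in the $3$-connected $G$, at most one of which leaves $F\cup\{a,b,c\}$, and that one can be rerouted through $d$ in $G_F$; hence $F\setminus S$ lies in a single component of $G_F-S$. Only then does one check, by a short case analysis on $|S\cap F|$, that every vertex of $\{a,b,c,a',b',c',d,d'\}\setminus S$ reaches $F\setminus S$.
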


\begin{proof}
  Note that since $G$ is 3-connected, every vertex of $\{a, b, c\}$
  has at least one neighbor in $F$.  Note that in all cases, $a'$, $b'$ and $c'$ have
  degree~3 and are pairwise non-adjacent.

  Let us first prove that $G_F$ is 3-connected.  
  Suppose for a contradiction that $G_F$ has a $2$-cutset $\{w, w'\}$.
  Observe that if
  $x, y\in F$, then there exist three internally vertex-disjoint paths
  from $x$ to $y$ in $G$, and at most one of them has vertices in
  $V(G) \sm (F \cup \{a, b, c\})$.  This path can be rerouted through
  $d$ to obtain a path of $G_F$.  It follows that in $G_F$, any pair
  of vertices from $F$ can be linked by three vertex-disjoint paths.
  Hence all the vertices
  from $F\sm \{w, w'\}$ are in the same component of $G_F - \{w,
  w'\}$.  Therefore, to get a contradiction,  it is
  sufficient to show that any vertex of $\{a,b,c,a',b',c',d,d'\}\setminus \{w,w'\}$ can be linked
  by a path of $G_F - \{w, w'\}$ to some vertex of $F\sm \{w, w'\}$.
    If $\{w, w'\}
  \subseteq F$, then at least one of
  $a, b, c$ has a neighbor in $F\sm \{w, w'\}$, because $G$ is $3$-connected.
   So $G_F - \{w, w'\}$ is connected.  If $w\in F$ and $w'
  \notin F$, then $G_F - \{w, w'\}$ is connected, unless $w$ is the
  unique neighbor of $a$ in $F$, $a'\neq a$ and $w=a'$ (up to a
  symmetry).  But this contradicts the way $G_F$ is constructed,
  because when $a$ has a unique neighbor in $F$, then $a = a'$.
  When $w, w'\notin F$, one can easily see again that $G_F - \{w, w'\}$ is
  connected.  This proves that $G_F$ is 3-connected.

  Let us now prove that $W(G_F) \subseteq W(G)$.  Let $w\in W(G_F)$ and
  let $C$ be the rim of a wheel centered at $w$.  If $w = d$, then
  $C$ must go through $a'$, $b'$ and $c'$.  Since these vertices have
  degree~3, are pairwise non-adjacent and are adjacent to $d'$, there
  is a contradiction because the cycle $C$ must contain three edges
  incident to $d'$.  So $w\neq d$, and similarly $w\neq d'$.  If
  $w=a'$, then $C$ must go through the edges $db', dc', d'b', d'c'$, a
  contradiction.  So $w\neq a'$, and similarly $w\neq b'$ and $w\neq
  c'$.  If $w=a$, then we know $a\neq a'$.  If $C$ is contained in
  $G[F]$, then $a\in W(G)$.  If not, then $C\cap G[F]$ is a path $P$
  from $b$ to $c$ containing at least two neighbors of $a$.  Let $x\in
  V(G)\sm (F \cup \{a, b, c\})$ be a neighbor of $a$.  In $G - a$,
  consider a 2-fan $Q, R$ from $x$ to $\{b, c\}$.  Then $P\cup Q\cup
  R$ is a cycle of $G$ in which $a$ has at least three neighbors.
  Hence $a\in W(G)$.  If $w\in F$, then a wheel of $G$ centered at $w$
  can be obtained by replacing some path of $C$ with both ends in
  $\{a, b, c\}$ with a path from $G - F $ with the same ends.  Hence,
  $w\in W(G)$.

  We proved that $W(G_F) \subseteq W(G)$.  But every vertex of $F$ has
  the same degree in $G$ and $G_F$, and the vertices of $\{a,b,c\}$
  have degree in $G_F$ no larger than in $G$.  Therefore, $W(G_F)$ is
  either empty, or made of a single vertex of degree~3, or made of two
  adjacent vertices, both of degree~3. In other words, $G_F$ is almost
  wheel-free.
\end{proof}

Note that Lemma~\ref{l:3sep} is not so easy to use in a proof by
induction, because the graph $G_F$ may have more vertices and edges
than~$G$.  Also $G=G_F$ is possible.  A vertex is \emph{close to a
  twin} if it is either a member of a pair of twins or adjacent to a
member of a pair of twins.

\begin{lemma}
  \label{l:denseT}
  If every vertex of degree~3 in $G$ that is not in $W(G)$ is
  close to a twin, then $G$ contains two disjoint pairs of twins.
\end{lemma}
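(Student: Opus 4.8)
The plan is to argue by contradiction, assuming that $G$ does not contain two disjoint pairs of twins. Since $G$ is $3$-connected and almost wheel-free, Corollary~\ref{col:3con} gives that $G$ is minimally $3$-connected, so by Theorem~\ref{mader} it has many vertices of degree~$3$. If $G$ had no pair of twins at all, then by hypothesis every degree-$3$ vertex of $G$ would lie in $W(G)$, which has at most two vertices, and Theorem~\ref{mader} would force $|V(G)|\le 4$; but the only $3$-connected graph on at most four vertices is $K_4$, which is not almost wheel-free. So $G$ has at least one pair of twins.

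Next I would analyse the family $\mathcal{F}$ of all pairs of twins of $G$. Viewing its members as edges of a graph, the assumption that no two of them are disjoint means this graph has no matching of size~$2$, hence is a star or a triangle; so either all members of $\mathcal{F}$ contain a common vertex, or $\mathcal{F}\subseteq\{\{a,b\},\{b,c\},\{a,c\}\}$ for some $a,b,c$ with $N(a)=N(b)=N(c)$. In both cases there is a set $P$ with $p:=|P|\ge 2$ of pairwise non-adjacent vertices having a common neighbourhood $\{x,y,z\}$ (an independent set, by Lemma~\ref{l:noTriangle}), such that every two vertices of $P$ form a pair of twins and $P$ contains every vertex of $G$ lying in a pair of twins. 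In particular $p\le 3$, since $p\ge 4$ would give two disjoint sub-pairs of $P$. I would then check $\{x,y,z\}\cap W(G)=\emptyset$: if $x$ were the centre of a wheel with rim $C$ then $\deg(x)=3$ (as $G$ is almost wheel-free) and $x$ is adjacent to all of $P$, so either $p=3$ and $C$ meets $P$ in three vertices, or $p=2$ and $C$ meets $P$ in two vertices $a_1,a_2$ and contains a third neighbour of $x$; but a vertex of $P$ on $C$ has both its $C$-neighbours in $\{y,z\}$, so three of them would put more than two rim-edges at $y$ or at $z$, while two of them force $C$ to be the $4$-cycle $a_1\,y\,a_2\,z$, a contradiction in both cases. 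Hence, by hypothesis, every degree-$3$ vertex of $G$ lies in $P\cup\{x,y,z\}\cup W(G)$, a set of size at most $p+5$; combined with Theorem~\ref{mader} this bounds $|V(G)|$.

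The goal of the remaining argument is to show that $V(G)=P\cup\{x,y,z\}$. Granting this, $G=K_{p,3}$, and since $\kappa(K_{p,3})=\min(p,3)$, three-connectivity forces $p=3$; but then $G=K_{3,3}$, which plainly contains two disjoint pairs of twins, a contradiction. So suppose $R:=V(G)\setminus(P\cup\{x,y,z\})$ is non-empty. Any vertex with neighbourhood $\{x,y,z\}$ is a twin of a vertex of $P$, hence lies in $P$, so no vertex of $R$ has neighbourhood $\{x,y,z\}$; also the only edges leaving $R$ end in $\{x,y,z\}$, since vertices of $P$ have all their neighbours in $\{x,y,z\}$. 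Every vertex of $R$ outside $W(G)$ has degree at least~$4$, and by Theorem~\ref{maderCycle} the graph obtained from $G[R]$ by deleting $W(G)\cap R$ (at most two vertices) contains no cycle, hence is a forest; an edge count then forces $\deg(x)+\deg(y)+\deg(z)$ to grow with $|R|$, so some vertex, say $x$, has many neighbours in $R$. On the other hand $G-x$ is $2$-connected and, since $G$ is wheel-free, no three neighbours of $x$ lie on a common cycle of $G-x$; by Dirac's theorem (Theorem~\ref{th:cycleThroughK}) $G-x$ is then not $3$-connected, and, applying the splitter characterisation of Theorem~\ref{th:3inaCycle} to triples of neighbours of $x$, these neighbours are confined to a very restrictive system of $2$-separations of $G-x$. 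Reconciling ``$x$ has many neighbours in $R$'' with this restriction is the step I expect to be the main obstacle; I anticipate it is handled by reducing along a $3$-separation inside $R$ via Lemma~\ref{l:3sep} --- producing a fresh pair of twins disjoint from $P$ --- possibly finishing with a finite case analysis using the bound on $|V(G)|$ obtained above together with the machinery of Section~\ref{s:3ic}.
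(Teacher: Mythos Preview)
Your argument is genuinely incomplete: the last paragraph is a sketch, and the heavy machinery you invoke there (splitters, Lemma~\ref{l:3sep}, a finite case analysis on $|V(G)|$) is neither carried out nor the right tool.  More importantly, the target you set yourself --- proving $R:=V(G)\setminus(P\cup\{x,y,z\})=\emptyset$ --- is stronger than what is needed and is not what the paper does.

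The step you are missing is the reduction via Lemma~\ref{l:k33-e}.  After producing one pair of twins $\{a,b\}$ with $N(a)=N(b)=\{x,y,z\}$, observe that if some two of $x,y,z$ have a common neighbour $c\notin\{a,b\}$, then $\{a,b,c,x,y,z\}$ contains a copy of $K_{3,3}\setminus e$, so by Lemma~\ref{l:k33-e} $G\cong K_{3,3}$ and you are done.  Otherwise no two of $x,y,z$ share a neighbour outside $\{a,b\}$; in particular none of $x,y,z$ lies in any pair of twins, so your set $P$ equals $\{a,b\}$.  Now you have already (correctly) argued via Theorem~\ref{maderCycle} that $G[R\setminus W(G)]$ is a forest.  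Take a vertex $u$ of degree at most~$1$ in this forest.  Then $u$ has at most one neighbour in $R\setminus W(G)$, at most one neighbour among $x,y,z$ (this is exactly where the $K_{3,3}\setminus e$ reduction is used), and at most one neighbour in $W(G)$ (since $W(G)$ is a clique and $G$ is triangle-free by Lemma~\ref{l:noTriangle}); moreover $u$ has no neighbour in $\{a,b\}$.  Hence $\deg_G(u)\le 3$, so $\deg_G(u)=3$, and $u\notin W(G)$.  By hypothesis $u$ is close to a twin; but $u\notin P\cup\{x,y,z\}$, so the pair of twins witnessing this is disjoint from $\{a,b\}$ --- contradiction.

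Two further remarks.  Your verification that $\{x,y,z\}\cap W(G)=\emptyset$ is correct but unnecessary: the argument above never uses it.  And in the last paragraph you write ``since $G$ is wheel-free''; $G$ is only almost wheel-free, so this is a slip, though your earlier work ($x\notin W(G)$) would repair it.
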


\begin{proof}
  Since $G$ is 3-connected, $|V(G)| \geq 4$.  If $|V(G)| = 4$, then
  $G$ is isomorphic to $K_4$, and $G$ contains a triangle, a
  contradiction to Lemma~\ref{l:noTriangle}.  So, $|V(G)| \geq 5$.
  Since $G$ is minimally 3-connected, by Theorem~\ref{mader}, it
  follows that $G$ contains at least $\lceil 12/5 \rceil = 3$ vertices of
  degree~3.  One of them, say $v$, is not in $W(G)$.  Hence, $v$ is close to
  a twin.  It follows that $G$ contains a pair of twins $\{a, b\}$.
  Let $x, y, z$ be the three neighbors of $a$ and $b$. 

  Suppose that $x$ and $y$ have a common neighbor $c$ distinct from $a$ and $b$.   
  Then $G$ contains a subgraph isomorphic to $K_{3, 3} \sm
  e$.  Hence, by Lemma~\ref{l:k33-e}, $G$ is isomorphic to $K_{3, 3}$,
  so it contains two disjoint pairs of twins.  Therefore, we may
  assume that $x, y$ have no common neighbors (except $a$ and $b$),
  and similarly, $x, z$ and $y, z$.  In particular, no pair of twins
  of $G$ contains $x$, $y$, or~$z$.
  Let $R = V(G) \sm \{a, b, x, y, z\}$.  Note that $|R| \geq 3$
  because of the neighbors of $x$, $y$ and $z$.  
  
  We claim that $R$
  contains a vertex $u\notin W(G)$ of degree~3 in $G$.  Suppose first
  that $G[R\sm W(G)]$ has no vertex of degree at most~1.  Then,
  $G[R\sm W(G)]$ contains a cycle $C$, which is also a cycle in $G$.
  By Corollary~\ref{col:3con} $G$ is minimally 3-connected, and so, according to
  Theorem~\ref{maderCycle}, cycle $C$ contains a vertex $u$
  whose degree (in $G$) is~3.  Hence, we are left with the case when
  $G[R\sm W(G)]$ has a vertex $u$ of degree at most 1.  The degree of
  $u$ in $G$ is at most~3. Indeed, $u$ is adjacent to at most one
  vertex among $x, y, z$ and to at most one vertex in $W(G)$ because
  $W(G)$ is a clique and by Lemma~\ref{l:noTriangle}, there is no
  triangle in $G$.  This proves our claim.
  
  Now $u$ is not in $W(G)$, is close to a twin, so it must be a member of
  a pair of twins of $G$, or adjacent to some member of a pair of
  twins of $G$.  This pair of twins is in $R$, so, it is disjoint from
  $\{a, b\}$. 
\end{proof}

\begin{theorem}
  \label{th:twin}
  If $G$ is an almost wheel-free 3-connected graph, then $G$ contains
  two disjoint pairs of twins.
\end{theorem}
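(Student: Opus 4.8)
The plan is to argue by induction on $|V(G)|$, using Lemma~\ref{l:denseT} to reduce everything to producing, for \emph{each} vertex $v$ of degree~3 that is not in $W(G)$, either a pair of twins containing $v$ or a pair of twins containing a neighbor of $v$. Once we have that ``local'' statement for every such $v$, Lemma~\ref{l:denseT} immediately yields two disjoint pairs of twins and we are done. So fix a vertex $v$ of degree~3 with $v\notin W(G)$ and neighbors $x,y,z$; by Lemma~\ref{l:noTriangle} the set $\{x,y,z\}$ is independent and $v$ sits in no triangle. The key point is that, since $v\notin W(G)$, no cycle of $G-v$ goes through $x,y,z$, so Theorem~\ref{th:3inaCycle} applies to $G-v$ (which is $2$-connected because $G$ is $3$-connected): there is a splitter $(A,B)$ of $G-v$ with respect to $x,y,z$.

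Next I would exploit the splitter. The vertex $v$ is adjacent to all three of $x,y,z$, and $x,y,z$ lie in three distinct components $X,Y,Z$ of $(G-v)-(A\cup B)$; combining this with $3$-connectivity of $G$ strongly constrains the sizes of $A$ and $B$ and the components $X,Y,Z$. I expect the first case to be that $|A|=|B|=1$, say $A=\{a\}$, $B=\{b\}$: then $\{a,b,v\}$ is a $3$-cutset of $G$ separating $x,y,z$ into $X,Y,Z$, and each of these components, together with $v$ and the relevant vertex among $a,b$, has only $v$ plus $\{a,b\}$ on its boundary — actually $x$ (resp.\ $y,z$) has its only neighbours outside its component in $\{a,b,v\}$. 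Since $\deg(x)\ge 3$, this forces $x$ adjacent to all of $a,b,v$, and likewise $y$ and $z$; so $a$ and $b$ have three common neighbours $x,y,z$, and Lemma~\ref{l:k23Twin} says $a,b$ are twins — and moreover $x,y,z$ are degree-3 (their neighbourhood is exactly $\{a,b,v\}$, and if some component $X$ had another vertex adjacent to $x$ we would get more room — here one must rule this out using the splitter structure or by a small fan argument). Then $x,y,z$ are twins of each other in pairs too, or rather $v,a,b$ are the common neighbours; in any event $v$ is adjacent to the twin pair $\{a,b\}$, so $v$ is close to a twin, which is what we need.

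The remaining cases are where $|A|$ or $|B|$ equals $3$. Here I would use Lemma~\ref{l:3sep}: one of the components $X$, say, has $N_G(X)$ a $3$-set $\{x_A,x_B,v\}$ or similar, i.e.\ $X\cup\{v\}$-type pieces give genuine $3$-cuts of $G$, and Lemma~\ref{l:3sep} turns such a fragment $F$ into a strictly controlled almost wheel-free $3$-connected graph $G_F$ in which one can apply induction — but since $G_F$ may be larger than $G$, the induction must instead be set up on the ``small side'': one picks $F$ to be an end, so $F$ is small, $G[V\setminus F]$-type pieces are handled, and twins found in the reduced graph pull back to twins of $G$ because the new vertices $a',b',c',d,d'$ added in Lemma~\ref{l:3sep} all have degree $3$ and their twin-partners, if any, must lie inside $F$. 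Concretely: take a fragment $F$ with $|N(F)|=3$, apply Lemma~\ref{l:3sep} to get $G_F$, apply the theorem inductively to $G_F$ (it has fewer vertices on the ``$F$ side'' — one needs $|F|<|V(G)|$ strictly, which holds as long as $\overline F\neq\emptyset$ and $|\overline F|\ge 1$, true by definition of fragment), obtain two disjoint twin pairs of $G_F$, and observe that at least one of them avoids the at most two added vertices that sit over any single $N(F)$-vertex, hence survives into $G$ — and if a twin pair in $G_F$ involves $d$ or $d'$ or an $a'$, the pair structure forces the ``real'' twins to be $\{a',b'\}$-like or inside $F$, which translates back. The main obstacle I anticipate is precisely this bookkeeping: verifying that the twins found in $G_F$ (especially when they use the artificial vertices $d,d',a',b',c'$) correspond to genuine twins of $G$ or can be replaced by such, and arranging the induction so that $G_F$ is strictly smaller on the relevant side — this is where one has to be careful that $G=G_F$ does not occur in the inductive step, which means first disposing of the case $\kappa(G)=3$ with \emph{no} ``small'' $3$-cut via the direct splitter analysis of the previous paragraph, and only invoking Lemma~\ref{l:3sep} when a proper fragment genuinely exists.
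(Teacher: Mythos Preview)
Your overall toolkit (the splitter theorem, Lemma~\ref{l:3sep}, and Lemma~\ref{l:denseT}) is exactly the paper's, but the way you plan to assemble them has a logical gap. You propose to prove, for \emph{every} degree-$3$ vertex $v\notin W(G)$, that $v$ is close to a twin, and then invoke Lemma~\ref{l:denseT}. However, the machinery you intend to use in the $|A|=3$ (or $|B|=3$) case --- applying Lemma~\ref{l:3sep} to a fragment and using induction --- does not produce a twin pair \emph{near $v$}: it produces twin pairs somewhere inside the fragment. Concretely, when $|X|>1$ and you build $G_X$ and apply the inductive hypothesis, the two twin pairs you obtain are $\{y',z'\}$ (the artificial vertices) and a second pair lying in $X$; that second pair is a genuine twin pair of $G$, but there is no reason it contains $x$ or a neighbour of $x$, so you have not shown that $v$ is close to a twin. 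Hence the hypothesis of Lemma~\ref{l:denseT} is not verified, and in fact it may simply be false for some $G$: the theorem only asserts the existence of two twin pairs, not that every degree-$3$ vertex sits near one.

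The paper avoids this by using Lemma~\ref{l:denseT} in the opposite direction. One takes a minimum counter-example $G$; since $G$ fails the conclusion, the contrapositive of Lemma~\ref{l:denseT} furnishes a specific $v$ of degree~$3$, $v\notin W(G)$, that is \emph{not} close to a twin. The splitter is applied to this particular $v$, and the subsequent analysis is aimed not at locating a twin near $v$ but at producing two disjoint twin pairs \emph{anywhere} in $G$ (via minimality applied to various $G_F$'s), which already contradicts $G$ being a counter-example. Along the way one also exploits that $v$ is not close to a twin to rule out several degenerate configurations (e.g.\ the case $|A|=|B|=1$ with $|Y|=|Z|=1$ forces $\{y,z\}$ to be a twin pair adjacent to $v$, a contradiction). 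Your $|A|=|B|=1$ discussion illustrates the same confusion: you never justify $|X|=|Y|=|Z|=1$ (this itself requires the inductive step via Lemma~\ref{l:3sep}, cf.\ the paper's Claim~\eqref{c:Xok}), and your conclusion ``$v$ is adjacent to the twin pair $\{a,b\}$'' is false --- $v$ has neighbours $x,y,z$, not $a,b$. If you reorganise the argument as a minimum counter-example and aim for twin pairs anywhere in $G$ rather than near $v$, your sketch becomes essentially the paper's proof; the remaining work is the careful size bookkeeping ensuring each $G_F$ is strictly smaller (this is where Claims~\eqref{c:1or4} and~\eqref{i:f3} enter).
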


\begin{proof}
  We consider a minimum counter-example $G$, and we look for a
  contradiction. 
  
  \begin{claim}
    \label{i:f3}
    No fragment $F$ of $G$ is such that $|F| \geq 6$, $|\overline{F}|
    \geq 2$ and $F$ contains a pair of twins of $G$.
  \end{claim}

  \begin{proofclaim}
    For suppose that such a fragment $F$ exists with $N(F) = \{a, b, c\}$.
    So, $\overline{F}$ is also a fragment of $G$ and $N(\overline{F})
    = \{a, b, c\}$.  Consider the graph $G_{\overline{F}}$ built as in
    Lemma~\ref{l:3sep}.  Since $|F|\geq 6$, we
    have $|V(G_{\overline{F}})| < |V(G)|$.  By Lemma~\ref{l:3sep},
    $G_{\overline{F}}$ is almost wheel-free and 3-connected.
    
    By the minimality of $G$, $G_{\overline{F}}$ contains two disjoint
    pairs of twins. None of them is a pair of twins in $G$, for
    otherwise with the one in $F$, $G$ would have two pairs of twins,
    a contradiction.  Hence one pair of twins is $\{d, d'\}$ and the
    second one is $\{a, b\}$, $\{a, c\}$ or $\{c, b\}$, because if
    $a'\neq a$ (resp.\ $b'\neq b$, $c'\neq c$), then $a'$ (resp.\
    $b'$, $c'$) is in no pair of twins.  Without loss of generality
    $\{a, b\}$ is a pair of twins of $G_{\overline{F}}$.  This means
    that $a$ and $b$ are adjacent to $d$, to $d'$ and to a vertex $d''
    \in \overline{F}$, and that $d''$ is the unique neighbor of $a$
    and $b$ in $\overline{F}$.  It follows that $\{d'', c\}$ is a
    cutset of $G_{\overline{F}}$, a contradiction, unless
    $\overline{F} = \{d''\}$, which is also a contradiction because we
    suppose $|\overline{F}| \geq 2$.
  \end{proofclaim}

  By Lemma~\ref{l:denseT}, and because $G$ does not contain two
  disjoint pairs of twins, there exists a vertex $v$ in $V(G)\sm W(G)$ of
  degree~3 and not close to a twin.  Let $x$, $y$ and $z$ be the three
  neighbors of $v$.  Note that $G - v$ is 2-connected and in $G -
  v$, no cycle goes through $x, y, z$ (because such a cycle would be
  the rim of a wheel centered at $v \notin W(G)$).  Hence, by
  Theorem~\ref{th:3inaCycle}, there is a splitter $A = \{x_A, y_A,
  z_A\}$, $B = \{x_B, y_B, z_B\}$ for $x, y, z$ in $G - v$.  We
  denote by $X, Y, Z$ the components of $G - (A \cup B)$ that contain
  $x, y, z$ respectively.

  \begin{claim}
    \label{c:1or4}
    Either $|Y| = 1$, or $|Y| \geq 4$, or $Y= \{y, y', y''\}$ and
    $y_Ay'$, $y_Ay''$, $y_By'$, $y_By'' \in E(G)$. 
  \end{claim}

  \begin{proofclaim}
    Suppose $Y$ has cardinality~2, say $Y = \{y, y'\}$.  Since $y'$
    has degree at least~3 and is non-adjacent to $v$, $y'$ must be
    adjacent to $y_A$, $y_B$, and $y$.  Since $y$ also has degree at
    least~3, it must be adjacent to at least one of $y_A, y_B$.
    Hence, $G$ contains a triangle, a contradiction to
    Lemma~\ref{l:noTriangle}.

    Suppose $Y$ has cardinality~3, say $Y = \{y, y', y''\}$.  If $yy'
    \notin E(G)$ then, since $y'$ has degree at least~3, it must be
    adjacent to $y''$, $y_A$ and $y_B$.  Also, $y''$ must be adjacent
    to at least one of $y_A$ or $y_B$, so $G$ contains a triangle, a
    contradiction to Lemma~\ref{l:noTriangle}.  Hence, $yy'\in E(G)$
    and similarly, $yy'' \in E(G)$.  So by Lemma~\ref{l:noTriangle}
    $y'y''\notin E(G)$. Since $y'$ and $y''$ have degree at least~3,
    $y_Ay', y_Ay'', y_By', y_By'' \in E(G)$ as claimed.
  \end{proofclaim}

  \begin{claim}
    \label{c:Xok}
    Either $|X| = 1$ or $X$ contains a pair of twins of~$G$.
  \end{claim}

  \begin{proofclaim}
    Suppose $|X|>1$.  Note that $X$ is a fragment of $G$.  So let us
    build the graph $G_X$ as in Lemma~\ref{l:3sep} (with more
    convenient names given to the vertices): start with $G[X \cup \{v,
    x_A, x_B\}]$ and add two vertices $y'$ and $z'$ and the edges
    $vy'$ and $vz'$.  If $x_A$ has at least two neighbors in $X$, then
    add a new vertex $x'_A$ and the edges $x_Ax'_A$, $x'_Ay'$ and
    $x'_Az'$; otherwise, set $x'_A=x_A$ and add the edges $x_Ay'$ and
    $x_Az'$.  If $x_B$ has at least two neighbors in $X$, then add a
    new vertex $x'_B$ and the edges $x_Bx'_B$, $x'_By'$ and $x'_Bz'$;
    otherwise, set $x'_B=x_B$ and add the edges $x_By'$ and $x_Bz'$.
    By Lemma~\ref{l:3sep}, $G_X$ is almost wheel-free and 3-connected.
 
    We claim that $|V(G_X)| < |V(G)|$.  Observe that $G_X$ has at most
    four vertices not in $G$, the ones of $\{z', y', x'_A, x'_B\}$.
    Suppose for a contradiction that $|V(G_X)| \geq |V(G)|$.  Then
    $|Y|+|Z|\leq 4$. By (\ref{c:1or4}), one of $Y$ and $Z$, say $Z$,
    has cardinality $1$.  If $|Y|>1$, then by (\ref{c:1or4}), $Y= \{y,
    y', y''\}$ and $y_Ay'$, $y_Ay''$, $y_By'$, $y_By'' \in E(G)$.
    Moreover $A= \{x_A\}$ and $B= \{x_B\}$ for otherwise $|V(G_X)| <
    |V(G)|$. Thus $x_A$ and $x_B$ have three common neighbors in $G$,
    namely $y', y''$ and~$z$, and also have degree at least~4. This
    contradicts Lemma~\ref{l:k23Twin}.  Hence $|Y| = |Z| = 1$. By
    Lemma~\ref{l:noTriangle}, $y_Ay_B\notin E(G)$ and $z_Az_B\notin
    E(G)$.  If $y_A=z_A$ and $y_B=z_B$, then $y$ and $z$ have three
    common neighbors in $G$, namely $x_A$, $x_B$ and $v$.  Hence, by
    Lemma~\ref{l:k23Twin}, they form a pair of twins of $G$, so $v$ is
    close to a twin, a contradiction to the choice of $v$.  Hence by
    symmetry, we may assume that $|A| = 3$.  Hence $|V(G_X)| <
    |V(G)|$, unless $|B|=1$, $x_A \neq x'_A$, $x_B \neq x'_B$, and
    $V(G) = A \cup B \cup X \cup \{y, z, v\}$.  Since there is no
    triangle, one of $y_A, z_A$ is of degree~2, a contradiction.  This
    proves the claim.

    Now, $G_X$ is almost wheel-free, 3-connected and smaller than $G$.
    Hence, by the minimality of $G$, $G_X$ contains two disjoint pairs
    of twins.  One of them is $\{y', z'\}$.  The other one is either
    in $X$, in which case it is also a pair of twins of $G$ (what we
    want to prove) or is $\{x_A, x_B\}$.  In the later case, $x_A$ and
    $x_B$ have degree~3 in $G_X$, so they are both adjacent to a
    unique same vertex $x'$ in $X$.  Then $\{x', v\}$ is a cutset of
    $G_X$, that separates $X\sm \{x'\}$ from $x_A$, a contradiction to
    the $3$-connectivity of $G_X$.
  \end{proofclaim}

  \begin{claim}
    \label{c:XYZ1}
   ~$|X| = |Y| = |Z| = 1$. 
  \end{claim}

  \begin{proofclaim}
    By~(\ref{c:Xok}) every set among $X$, $Y$ or $Z$ of cardinality at least two contains a pair of twins. Hence we may assume
    that $|Y| = |Z| = 1$. Thus $y_Ay_B$ and $z_Az_B$ are not edges by
    Lemma~\ref{l:noTriangle}. Suppose $|X|> 1$.  Note that
    by~(\ref{c:Xok}), $X$ contains a pair of twins of $G$.

    Suppose first that $|A| = |B| = 3$.  Hence, since $G$ is
    3-connected, every component $D$ of $G - (A \cup B \cup \{v\})$
    satisfies $N(D) \subseteq A$ or $N(D) \subseteq B$.  Let $C_A$ be
    the union of all components $D$ such that $N(D) \subseteq A$.
    Because $G$ has minimum degree~$3$ and no triangle, there must be
    at least two vertices in $C_A$.  Hence, $F= V(G) \sm (C_A \cup
    \{x_A, y_A, z_A\})$ contradicts~(\ref{i:f3}).

    Suppose that $|B| = 1$ and $|A| = 3$.  The vertices $x_A$, $y_A$
    and $z_A$ have degree at least~3, and by~(\ref{i:f3}), there is at
    most one vertex in the union of all components $D$ such that $N(D)
    \subseteq A$ in $G - (A \cup B \cup \{v\})$.  If such a component
    $D$ exists, then there is a common neighbour to $x_A$, $y_A$ and
    $z_A$ and those three vertices are pairwise non-adjacent.  Hence
    there must be a component $D'$ such that $x_B \in N(D')$ because
    of their degrees.  If no component $D$ such that $N(D) \subseteq
    A$ exists, then since $x_A$, $y_A$ and $z_A$ do not form a
    triangle, there also must be a component $D'$ such that $x_B \in
    N(D')$.  Let $x'_B$ be a neighbor of $x_B$ in $D'$.  In $G - x_B$,
    consider a 2-fan from $x'_B$ to $\{x_A, y_A, z_A\}$.  If this
    2-fan is formed of $P=x'_B \dots y_A$ and $Q=x'_B \dots z_A$, then
    $(vyy_APx'_BQz_Axv, x_B)$ is a wheel, a contradiction because
    $x_B$ has degree at least $4$.  Hence, we may assume that the
    union of the two paths of this 2-fan is a path $P$ from $x_A$ to
    $y_A$ with internal vertices in $C_B$ and going through~$x'_B$.
    Now, consider a neighbor $x'\in X$ of $x_B$, and a 2-fan in $G -
    \{x_B\}$ from $x'$ to $\{v, x_A\}$.  The union of the two paths of
    this 2-fan is a path $Q$ from $v$ to $x_A$, with interior in $X$,
    and that goes through $x'$.  Then $v Q x_A P y_A y v$ is a cycle
    that contains three neighbors of $x_B$ (namely $y$, $x'$ and
    $x'_B$), a contradiction since $x_B$ has degree at least~4.

    Similarly, we get a contradiction if $|B| = 1$ and $|A| = 3$. So
    $|A| = |B| = 1$ and $\{y, z\}$ is a pair of twins, a contradiction to
    the choice of $v$.
 \end{proofclaim}

\begin{claim}
  \label{c:AB3}
  ~$|A| = |B| = 3$.
\end{claim}

\begin{proofclaim}
  By the choice of $v$,  $\{x, y\}$ is not a pair of twins of $G$,  and so it is impossible to have $|A| = 1$ and $|B| = 1$.  
   If $x_A = y_A = z_A$, then by Lemma~\ref{l:k23Twin}, $\{v,
  x_A\}$ is a pair of twins of $G$, a contradiction to the choice of $v$.
   Similarly, $x_B = y_B = z_B$ is impossible.
  Hence, $|A| = |B| = 3$.
\end{proofclaim}

We are now ready to finish the proof.  We know by~(\ref{c:XYZ1}) that
$|X| = |Y| = |Z| = 1$ and by~(\ref{c:AB3}) that $|A| = |B| = 3$.
Hence, every component $D$ of $G - (A \cup B \cup \{v, x, y, z\})$
satisfies $N(D) \subseteq A$ or $N(D) \subseteq B$, and there are no
edges between $A$ and $B$.  Let $C_A$ (resp.\ $C_B$) be the union of
all components $D$ such that $N(D) \subseteq A$ (resp.~$N(D) \subseteq
B$).  Then, $F = C_B \cup \{x_B, y_B, z_B, v\}$ is a fragment of $G$,
and we build the graph $G_F$ as in Lemma~\ref{l:3sep}.  Note that
$N(F) = \{x, y, z\}$ and each of $x$, $y$ and $z$ has exactly one
neighbor in $\overline{F}$.  So, the graph $G_F$ is obtained by adding
to $G\sm F$ two vertices $d$ and $d'$, and by linking them to $x$, $y$
and $z$. We obtain a graph smaller than $G$, and by the minimality of
$G$, it must contain two disjoint pairs of twins.  One of them is
$\{d, d'\}$, the other one must be in $C_A$ and is in fact a pair of
twins of $G$.  Hence, $C_A$ contains a pair of twins of $G$, and by a
symmetric argument, so does $C_B$.  Hence $G$ contains two disjoint
pairs of twins, a contradiction.
\end{proof}

\section{Proof  of Theorem~\ref{th:2orT}}
\label{sec:col}

To prove Theorem~\ref{th:2orT}, we actually prove the following stronger theorem.
 \begin{theorem}\label{thm:stronger}
 Every wheel-free graph on at least two
  vertices contains either

  \begin{enumerate}[(i)]
  \item\label{o:22} two vertices of degree at most 2; or
  \item\label{o:121T} one vertex of degree at most 2 and one pair of
    twins; or
  \item\label{o:2T} two disjoint pairs of twins.
  \end{enumerate}
\end{theorem}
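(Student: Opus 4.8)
The plan is to argue by induction on $|V(G)|$. Call a \emph{token} of a graph either a vertex of degree at most~$2$ or a pair of twins; since two \emph{disjoint} tokens of $G$ directly give one of~(\ref{o:22}), (\ref{o:121T}), (\ref{o:2T}), it suffices to exhibit two disjoint tokens of $G$. The base cases $|V(G)|\le 4$ are immediate: a wheel-free graph on at most four vertices is not $K_4$, hence has at least two vertices of degree at most~$2$. If $G$ is disconnected, or has a cutvertex~$t$, I would split off two distinct components — of $G$, or of $G-t$ with $t$ added back — apply the induction hypothesis to each nontrivial piece (a trivial piece is a single vertex of degree at most~$1$), and observe that each piece yields a \emph{whole} token (a low-degree vertex or an entire twin pair) avoiding the glueing vertex $t$: in each of~(\ref{o:22})--(\ref{o:2T}) at most one of the (at most two) objects produced can contain $t$. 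Such a token keeps its degree and neighbourhood in $G$, so it is a token of $G$, and two such tokens, lying in different components, are disjoint.

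If $G$ is $3$-connected, then it is almost wheel-free (every wheel-free graph is), so Theorem~\ref{th:twin} yields outcome~(\ref{o:2T}) at once. So assume $\kappa(G)=2$ (recall $\kappa(G)\le 3$ by Lemma~\ref{l:no4}). Take two disjoint ends $F,F'$ of $G$; it is enough to produce, inside each of $F$ and $F'$, a token of $G$, since these are then disjoint. Fix $F$, with $N(F)=\{a,b\}$. If $|F|=1$ its vertex has degree at most~$2$. If $|F|=2$, then either some vertex of $F$ has degree at most~$2$ in $G$, or the two vertices of $F$ have degree~$3$ and are adjacent to each other and to $a$ and $b$, in which case the graph $G_F$ of Lemma~\ref{l:2conBlock} is a $K_4$, contradicting $W(G_F)\subseteq\{a,b\}$. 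So the case $|F|\le 2$ is settled.

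The case $|F|\ge 3$ is where the real work lies. Let $G_F$ be as in Lemma~\ref{l:2conBlock} and put $H=G_F\sm ab$: this is a subgraph of $G$, hence wheel-free, it is $2$-connected (as $G_F$ is $3$-connected), and $|V(H)|=|F|+2<|V(G)|$. By the induction hypothesis $H$ realises one of~(\ref{o:22})--(\ref{o:2T}), hence has a token; and since every vertex of $F$ has the same degree and neighbourhood in $H$ as in $G$, a token of $H$ lying inside $F$ is a token of $G$. The crux, and the main obstacle, is the \emph{localisation}: showing that a token of $H$ can be chosen disjoint from $\{a,b\}$. The idea would be to suppose the contrary and exploit the two extra facts from Lemma~\ref{l:2conBlock}, namely $W(G_F)\subseteq\{a,b\}$ and $|V(G_F)|\ge 4$. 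For instance, if $H$ has two disjoint twin pairs, one containing $a$ and one containing $b$, then $a$, $b$ and their $H$-twins have very constrained neighbourhoods (all contained in $F$), and a vertex adjacent to all four of them would be the centre of a wheel whose rim is a chordless $4$-cycle — impossible, since $W(G_F)\subseteq\{a,b\}$; the $3$-connectivity of $G_F$ then pins the configuration down to a tiny graph, essentially $K_{3,3}$, which does contain two disjoint twin pairs inside $F$. The remaining ways in which all tokens of $H$ could meet $\{a,b\}$ (a boundary vertex showing up as a degree-$\le 2$ token of $H$, or a twin pair of $H$ containing exactly one of $a$, $b$) are disposed of by the same kind of short analysis on $|F|$, $|N(a)\cap F|$ and $|N(b)\cap F|$, always using that $G_F$ is $3$-connected with no wheel centred outside $\{a,b\}$. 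Once the localisation is established, the induction closes.
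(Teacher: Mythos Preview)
Your skeleton is the paper's: induction, reduce to $\kappa(G)=2$, study each end $F$ via the $3$-connected graph $G_F$ of Lemma~\ref{l:2conBlock}, and localise a token inside $F$. The gaps are all in the localisation, and they are not all ``short analysis''.

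First, the case you do sketch --- two disjoint twin pairs of $H$, one containing $a$ and one containing $b$ --- is in fact vacuous: the paper proves (Claim in the proof) that \emph{no} twin pair of $H=G_F\sm ab$ can contain $a$ or $b$. This claim is the content you are waving away under ``a twin pair of $H$ containing exactly one of $a,b$'', and its proof is not a two-liner: one assumes $\{a,a'\}$ are twins in $H$ with common neighbours $x,y,z\in F$, takes a further neighbour $x'$ of $x$, and builds via carefully chosen $2$-fans in $G_F-x$ and $G_F-a$ a wheel of $G_F$ centred at $x\notin\{a,b\}$ (or a wheel of $G$), contradicting $W(G_F)\subseteq\{a,b\}$. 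Your $4$-cycle picture is not the right obstruction. Incidentally, the $K_{3,3}$ endgame in the paper arises from twin pairs of $G_F$, not of $H$.

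Second, and more seriously, induction on $H$ alone does not close the argument. If $|N(a)\cap F|=|N(b)\cap F|=2$, then $a$ and $b$ are the \emph{only} vertices of degree at most $2$ in $H$ (every vertex of $F$ has degree $\ge 3$ since $G_F$ is $3$-connected), and the induction hypothesis may well return outcome~(\ref{o:22}) with exactly those two vertices, producing nothing in $F$. The paper does not try to squeeze more out of the induction here; it switches tools. In this sub-case $a$ and $b$ have degree~$3$ in $G_F$ and are adjacent, so $G_F$ is almost wheel-free, and Theorem~\ref{th:twin} applied to $G_F$ yields two disjoint twin pairs of $G_F$. Either one lies in $F$ (hence is a twin pair of $G$), or the two pairs are $\{a,a'\}$ and $\{b,b'\}$; then $ab',ba',a'b'\in E(G_F)$, the six vertices $a,a',a'',b,b',b''$ span a $K_{3,3}\sm e$, and Lemma~\ref{l:k33-e} forces $G_F\cong K_{3,3}$, which visibly has a twin pair inside $F$. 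The paper also uses Theorem~\ref{th:twin} on $G_F$ (rather than induction on $H$) in the whole case $ab\in E(G)$, where $G_F$ is genuinely wheel-free.

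So your plan is right, but the sentence ``disposed of by the same kind of short analysis'' hides two real pieces of work: the no-boundary-twin claim, and the switch from induction on $H$ to Theorem~\ref{th:twin} on $G_F$ when both boundary vertices have only two neighbours in $F$.
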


\begin{proof}
 By induction on the number of vertices.

  If $|V(G)|=2$, then~(\ref{o:22}) obviously holds.  If $G$ is
  not 2-connected, then the conclusion follows easily from the
  induction hypothesis applied to connected components or blocks
  of~$G$.  If $G$ is 3-connected, then the conclusion holds by
  Theorem~\ref{th:twin}.  Hence, we may assume that the connectivity
  of $G$ is~2.  It is enough to prove that every end of $G$ contains
  either a vertex of degree~2 or a pair of twins of $G$.  So, let $F$
  be an end of $G$ and $N(F) = \{a, b\}$.  If $|F| = 1$, then $F$
  contains a vertex of degree~2, so suppose $|F| \geq 2$.  Build $G_F$
  as in Lemma~\ref{l:2conBlock}.

  \noindent{\bf Case 1:} $ab\in E(G)$.  Then, $G_F$ is a subgraph of
  $G$, so $G_F$ is wheel-free and 3-connected.  By
  Theorem~\ref{th:twin}, $G_F$ contains 2 disjoint pairs of twins.  If
  one of them is disjoint from $\{a, b\}$, then it is a pair of twins
  of $G$.  Hence, we may assume the two disjoint pairs of twins in $G_F$ are
  $\{a, a'\}$ and $\{b, b'\}$ for some $a'\neq b'$.  Note that $ab', ba'
  \in E(G)$, so $a'b' \in E(G)$.  Let $a''$ be the third common
  neighbor of $b, b'$ and $b''$ be the third common neighbor of $a,
  a'$.  Observe that a subgraph of $G_F$ on $\{a, a', a'', b, b',
  b''\}$ is isomorphic to $K_{3, 3} \sm e$.  Hence, by
  Lemma~\ref{l:k33-e}, $G_F$ is in fact isomorphic to $K_{3, 3}$.  It
  follows that $\{a', a''\}\subseteq F$ is a pair of twins of $G$.

\vspace{1ex}

\noindent{\bf Case 2:} $ab\notin E(G)$.  Then  $G[F
\cup \{a, b\}]=G_F\setminus ab$.  We
first prove the following.

  \begin{claim} 
    \label{c:noPaira}
    No pair of twins of $G_F \sm ab$ contains $a$ (resp.\ $b$).
  \end{claim}

  \begin{proofclaim}
    Otherwise, let $\{a, a'\}$ be a pair of twins in $G_F \sm ab$.
    Let be the three neighbours of $a$ in $F$.  Since $G_F$ is
    3-connected, $x$ has degree 3 in $G_F$ and so has a neighbour $x'$
    in $F\setminus \{a,b\}$.
    
     Suppose $a'=b$. A 2-fan in $G_F - x$, from
    $x'$ to $\{y, z\}$ together with $a$, $b$ and $x$ form a wheel of
    $G_F$ centered at $x$, a contradiction to Lemma~\ref{l:2conBlock}.
    Hence $a'\neq b$.

    In $G_F - x$, let $P,
    Q$ be a 2-fan from $x'$ to $\{y, z, b\}$.  Up to symmetry, $P= x'
    \dots y$.  If $Q= x' \dots b$, then $(x'Pya'zabQx',x)$ is a wheel in $G_F$, a contradiction to  Lemma~\ref{l:2conBlock}.
    Hence $Q= x' \dots z$.  Let  $P', Q'$ be a
    2-fan in $G_F - a$ from $b$ to $\{x, y, z\}$.  Note that $(V(P)
    \cup V(Q)) \sm \{y, z\}$ is disjoint from $V(P') \cup V(Q')$, for
    otherwise, a 2-fan from $x'$ to $\{y, b\}$ or to $\{z, b\}$
    exists, giving a contradiction as above.  Hence we may assume $P'
    = b \dots y$ and $Q' = b \dots x$ or $Q' = b \dots z$.  
    Then $(bP'y'Px'Qzaxb, a')$ or $(bP'yPx'xazb, a')$ respectively is a wheel in $G$, a contradiction.
    
  \end{proofclaim}  

  If $a$ or $b$ has at least three neighbors in $F$, then we apply the
  induction hypothesis to $G_F\sm ab$.  If $G_F\sm ab$
  contains two vertices of degree at most~2, one of them is in $F$.  Otherwise, $F$ contains a pair of twins in $G_F\sm ab$. 
  By~(\ref{c:noPaira}) this pair does not intersect $\{a, b\}$ and thus is also a pair of twins in $G$.

  Hence we may assume that $a$ and $b$ have both exactly two
  neighbors in~$F$.  Thus $G_F$ is almost wheel-free because $W(G_F)\subseteq \{a,b\}$ by Lemma~\ref{l:2conBlock}. 
   By Theorem~\ref{th:twin}, $G_F$ contains two disjoint pairs of twins.
  If one of them is in $F$, it is also a pair of twins in $G$.
  If not, then the two disjoint pairs of twins in $G_F$ are
  $\{a, a'\}$ and $\{b, b'\}$ for some $a'\neq b'$.  As in Case~1, one shows that $G_F$ has a subgraph
  isomorphic to $K_{3, 3}\sm e$, and so by Lemma~\ref{l:k33-e}, $G_F$ is
  isomorphic to $K_{3, 3}$.  It follows that $F$ contains a pair of
  twins of $G$.
\end{proof}

Remark  that Theorem~\ref{thm:stronger} is best possible as shown by the three graphs represented on
  Figure~\ref{fig:3options}.

 \begin{figure}[hbtp]
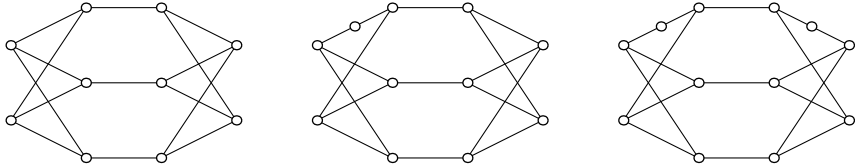

\begin{center}
\includegraphics{figWheels.3}\hspace{2em}
\includegraphics{figWheels.11}\hspace{2em}
\includegraphics{figWheels.12}\vspace{3ex}
\end{center}
\caption{Three wheel-free graphs.\label{fig:3options}}
\end{figure}

\section{When $K_4$ does not count as a wheel}
\label{sec:long}

A \emph{long wheel} is a wheel whose rim is a cycle of length at
least~4.  Observe that $K_4$ is the only wheel that is not a long
wheel.  Note that in several articles, the word \emph{wheel} is used
for what we call here \emph{long wheel}.

\begin{lemma}
  \label{l:block}
  Let $G$ be a long-wheel-free graph.  Then every block of $G$ is
  wheel-free or isomorphic to $K_4$.  
\end{lemma}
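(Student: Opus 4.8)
The plan is to prove the contrapositive-flavoured statement directly: take a block $B$ of $G$ that is not isomorphic to $K_4$, and show that it cannot contain a wheel. Since $G$ is long-wheel-free, so is $B$ (a subgraph of a long-wheel-free graph is long-wheel-free), hence any wheel contained in $B$ must be a $K_4$ — so it suffices to show that a wheel-$K_4$ contained in $B$ forces $B$ itself to be $K_4$.

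First I would fix a copy of $K_4$ inside $B$, say on vertices $\{a,b,c,d\}$, viewed as a wheel with rim $C = a\,b\,c\,a$ (a triangle) and centre $d$. Suppose for contradiction that $B$ has a vertex outside $\{a,b,c,d\}$. Since $B$ is 2-connected (it is a block with at least three vertices, as it contains $K_4$), I can use a fan/Menger-type argument — Lemma~\ref{l:fanLemma} or the $(v,C)$-type machinery from Section~\ref{sec:Menger} — to route paths from the rest of $B$ back into the $K_4$. The key point is that $B$ being 2-connected means some vertex $w \notin \{a,b,c,d\}$ has two vertex-disjoint paths to $\{a,b,c,d\}$ in $B - \{\text{one of }a,b,c,d\}$; combining such paths with the triangle $a\,b\,c$ yields a longer chordless-ish cycle through at least three of the four vertices, and $d$ (or whichever of the four is the centre) then sees at least three vertices of that cycle, producing a \emph{long} wheel — contradicting that $G$ is long-wheel-free. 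Concretely: if $w$ attaches via disjoint paths to two of $\{a,b,c\}$, then replacing the corresponding edge of the triangle $abc$ by the detour through $w$ gives a cycle of length $\geq 4$ on which $d$ has three neighbours; if $w$ attaches to $d$ and to one of $\{a,b,c\}$, a symmetric rerouting (swapping the roles of $d$ and the opposite rim vertex) does the job.

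The care needed — and where I expect the main (though still routine) obstacle to lie — is in handling the cases according to which of $a,b,c,d$ the two disjoint paths from $w$ actually reach, and in making sure the cycle one builds really does have length at least $4$ and really does have three of its vertices adjacent to a common vertex outside it. One should also dispose of the degenerate possibilities: if $B$ has only $4$ vertices it is $K_4$ already (the conclusion); if $B$ has $3$ or fewer vertices it contains no $K_4$ and is trivially wheel-free. For the main case, 2-connectivity of $B$ is the only structural input, and the pigeonhole step (two of the path-endpoints land among the three rim vertices, or we can symmetrize) mirrors exactly the reasoning already used in Lemma~\ref{l:2cutsetCycle} and Lemma~\ref{essential}, so it can be kept short. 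The upshot is that no vertex can lie outside the $K_4$, so $B = K_4$, and the only blocks of $G$ are wheel-free ones together with copies of $K_4$.
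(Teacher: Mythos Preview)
Your proposal is correct and follows essentially the same approach as the paper: if a block contains a $K_4$ and an extra vertex $v$, take a 2-fan from $v$ to the $K_4$ and observe that the union contains a long wheel. The paper's proof is slightly terser --- it exploits the full symmetry of $K_4$ to dispense with your case split (whichever two of $\{a,b,c,d\}$ the fan hits, the other two serve as the extra rim vertex and the centre) --- but the idea is identical.
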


\begin{proof}
  Let $H$ be a block of $G$.  If $H$ does not contain $K_4$, then it
  is obviously wheel-free.  So, suppose that $H$ contains a subgraph
  $H'$ isomorphic to $K_4$.  If $H\neq H'$, then let $v\in V(H) \sm
  V(H')$.  A long wheel in $G$ is obtained by taking the union of $H'$
  and a 2-fan from $v$ to $H'$, a contradiction. 
\end{proof}

\begin{theorem}
  If a graph does not contain a long wheel as a subgraph, then it is
  4-colorable.  
\end{theorem}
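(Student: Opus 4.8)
The plan is to reduce the statement to the blocks of the graph. The key classical fact is that the chromatic number of a graph is the maximum of the chromatic numbers of its blocks; equivalently, a graph $G$ is $k$-colorable if and only if every block of $G$ is $k$-colorable. I would justify this by an easy induction along the block--cut tree of $G$: take a leaf block $B$ attached at a cut vertex $v$, color $G'=G-(V(B)\setminus\{v\})$ by induction with colors in $\{1,\dots,k\}$, color $B$ with colors in $\{1,\dots,k\}$ (possible since $B$ is a block, hence $k$-colorable by assumption), then permute the color classes of the coloring of $B$ so that $v$ receives the same color in both colorings, and glue the two colorings along the unique shared vertex $v$. The base cases (the empty graph, a single vertex, a single edge) are trivial.

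With that in hand, the proof is short. First I would apply Lemma~\ref{l:block}: every block $B$ of a long-wheel-free graph $G$ is either wheel-free or isomorphic to $K_4$. Second, I would bound $\chi(B)$ in each case: if $B$ is wheel-free then $\chi(B)\le 3$ by Corollary~\ref{th:col}, and if $B\cong K_4$ then $\chi(B)=4$; in both cases $\chi(B)\le 4$. Finally, combining with the block-wise coloring principle above, $\chi(G)=\max_B \chi(B)\le 4$, so $G$ is $4$-colorable.

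There is no real obstacle here: all the substantive work has already been done in Lemma~\ref{l:block} (which isolates the role of $K_4$) and in Corollary~\ref{th:col} (the $3$-colorability of wheel-free graphs), and the merging of colorings across cut vertices is entirely routine. The only point worth stating carefully is that one must work with \emph{all} blocks, including the trivial ones ($K_1$ and $K_2$), so that the induction on the block--cut tree has a clean base and so that graphs that are not $2$-connected are covered. It is also worth remarking that the bound $4$ is optimal for this class, since $K_4$ itself is long-wheel-free.
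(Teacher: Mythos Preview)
Your proposal is correct and follows essentially the same route as the paper: reduce to blocks, invoke Lemma~\ref{l:block} to conclude each block is wheel-free or isomorphic to $K_4$, and then apply Corollary~\ref{th:col} in the wheel-free case. The only difference is that you spell out the standard block--cut-tree justification for ``it suffices to $4$-color each block,'' which the paper simply asserts.
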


\begin{proof}
  It is enough to prove that every block $H$ of $G$ is 4-colorable.
  This is obvious if $H$ is isomorphic to $K_4$.  Otherwise, by
  Lemma~\ref{l:block}, $H$ is wheel-free, so it is even 3-colorable by
  Theorem~\ref{th:col}.
\end{proof}

Note that to prove the theorem above, we do not need the full strength
of Theorem~\ref{th:col}.  Knowing that wheel-free graphs are
4-colorable is enough, and follows easily from
Theorem~\ref{th:Turner} or Theorem~\ref{th:deg3}.

\section*{Acknowledgement}

Thanks to Louis Esperet and Mat\v ej Stehl\'ik for useful discussions and
for pointing out to us a graph with chromatic number~4, no triangle and no
wheel as an induced subgraph.  Thanks to Fr\'ed\'eric Maffray for
pointing out to us the paper of Turner~\cite{turner:05}.

\end{document}